\newtheorem{theorem}{Theorem}[section]
\newtheorem{lemma}[theorem]{Lemma}
\newtheorem{proposition}[theorem]{Proposition}
\newtheorem{corollary}[theorem]{Corollary}
\newtheorem{lemma-definition}[theorem]{Lemma-Definition}
\theoremstyle{definition}
\newtheorem{definition}[theorem]{Definition}
\newtheorem{remark}[theorem]{Remark}
\numberwithin{theorem}{section}
\newcommand{\Z}{\mathbb{Z}}
\newcommand{\calC}{\mathcal{C}}
\newcommand{\calD}{\mathcal{D}}
\newcommand{\calE}{\mathcal{E}}
\DeclareMathOperator{\HOM}{\mathscr{H}\text{\kern -3pt {\calligra\large om}}\,}
\newcommand{\DD}{\EuScript D}
\newcommand{\KK}{\EuScript K}
\newcommand{\sA}{\mathop{s\overline{A}}\nolimits}
\newcommand{\Hom}{\mathop{\mathrm{Hom}}\nolimits}
\newcommand{\Barr}{\mathop{\mathrm{Bar}}\nolimits}
\newcommand{\id}{\mathop{\mathrm{id}}\nolimits}
\newcommand{\op}{\mathop{\mathrm{op}}\nolimits}
\newcommand{\sg}{\mathop{\mathrm{sg}}\nolimits}
\newcommand{\nc}{\mathop{\mathrm{nc}}\nolimits}
\newcommand{\HH}{\mathop{\mathrm{HH}}\nolimits}
\newcommand{\THH}{\mathop{\mathrm{TH}}\nolimits}
\newcommand{\Perf}{\mathop{\mathrm{Perf}}\nolimits}
\begin{document}

\title{Invariance of the Goresky-Hingston algebra on reduced Hochschild homology}
\author{Manuel Rivera \and Zhengfang Wang}

\newcommand{\Addresses}{{
  \bigskip
  \footnotesize

  Z.~Wang, \textsc{Max Planck Institute for Mathematics,  Vivatsgasse 7,  53111 Bonn Germany; \\
  Institute of Algebra and Number Theory, University of Stuttgart, Pfaffenwaldring 57, 70569 Stuttgart Germany 
}
\par\nopagebreak
  \textit{E-mail address}: \texttt{zhengfangw@gmail.com}

  \medskip

  M.~Rivera 
  (Corresponding author),
  \textsc{Purdue University, Department of Mathematics, 150 N. University St. West Lafayette, IN 47907}\par\nopagebreak
  \textit{E-mail address}: \texttt{manuelr@purdue.edu}

 }}




\maketitle

\begin{abstract}
We prove that two quasi-isomorphic simply connected differential graded associative  Frobenius algebras have isomorphic Goresky-Hingston algebras on their reduced Hochschild homology. Our proof is based on relating the Goresky-Hingston algebra on reduced Hochschild homology to the singular Hochschild cohomology algebra. For any simply connected oriented closed manifold $M$ of dimension $k$, the Goresky-Hingston algebra on reduced Hochschild homology induces an algebra structure of degree $k-1$ on $\overline{\mathrm H}^*(LM;\mathbb{Q})$, the reduced rational cohomology of the free loop space of $M$. As a consequence of our algebraic result, we deduce that the isomorphism class of the induced algebra structure on  $\overline{\mathrm H}^*(LM;\mathbb{Q})$ is an invariant of the homotopy type of $M$. 
\\
\\
{\it Mathematics Subject Classification} (2020). 16E40, 55P50; 18G10, 16S38.
\\
\emph{Keywords.} Frobenius algebras, Hochschild homology, string topology, Tate-Hochschild cohomology.

\end{abstract}

\section{Introduction}

\subsection{String topology and motivation} String topology is concerned with the algebraic structure of the free loop space $LM$ of a manifold $M$. The field began with the construction of a graded associative and commutative product on the homology of $LM$ defined by combining the intersection product on the underlying manifold $M$ with the concatenation product on pairs of loops in $M$ with a common base point \cite{ChSu}. Later on, a graded coassociative and cocommutative coproduct on the homology of $LM$ relative to constant loops $M \subset LM$ was described in \cite{Sul} and \cite{GoHi} by considering self intersections in a single family of loops and cutting at these intersection points to obtain two new families. These two operations are part of a rich family of compatible operations, which may be constructed at the chain level by intersecting, cutting, and reconnecting families of loops according to combinatorial patterns associated to moduli spaces of surfaces (e.g. \cite{Sul}, \cite{DrPoRo}).

There are different ways of making choices to construct string topology operations rigorously. Some use geometric and topological methods to describe intersections (\cite{GoHi}, \cite{HiWa},\cite{DrPoRo}, \cite{Sul}), others homotopy theoretic techniques \cite{CoJo}, and others start with an algebraic chain or cochain model for a manifold, such as the commutative dg  algebra of differential forms, and then make choices algebraically in order to construct operations using the relationship between Hochschild homology and the free loop space (\cite{WaWe}, \cite{Abb}, \cite{TrZe}, \cite{Kau}, \cite{NaWi}). This article is concerned with an algebra structure defined through this last approach on a reduced version of the Hochschild homology of a differential graded Frobenius algebra over a field of characteristic zero. Our main result is a purely algebraic statement regarding the quasi-isomorphism invariance of this algebra structure under a simply connected hypothesis on the underlying dg Frobenius algebra. The algebraic product studied in this article resembles a geometric construction known as the Goresky-Hingston coproduct (\cite{Sul}, \cite{GoHi}, \cite{HiWa}), which has been computed to be non-trivial in specific cases. The question of whether this operation is a homotopy invariant has been proven to be particularly subtle (\cite{HiWa2}, \cite{Na}). 

\subsection{Main theorem} Fix a field $\mathbb{K}$ of characteristic zero and write $\otimes = \otimes_{\mathbb{K}}$. Let $A$ be a unital dg $\mathbb{K}$-algebra  equipped with a non-degenerate symmetric pairing $\langle-, -\rangle \colon A \otimes A \to \mathbb{K}$ of degree $k>0$, which is compatible with the product and the differential of $A$. Such an object is called a \textit{dg  Frobenius algebra of degree $k$} (see Definition \ref{definition2.1}). In particular, any dg  Frobenius algebra of degree $k$ comes equipped with special degree $k$ element $\sum_{i} e_i \otimes f_i \in A \otimes A$ (called the Casimir element) satisfying $\sum_i xe_i \otimes f_i = \sum_i (-1)^{k|x|} e_i \otimes f_ix$ and $\sum_i(-1)^{|e_i||x|} e_i x \otimes f_i = \sum_i (-1)^{|e_i||x|} e_i \otimes x f_i$ for any $x \in A$. If we think of $A$ as a cochain model for a closed manifold $M$ of dimension $k$ and $\langle -, -\rangle\colon A \otimes A \to \mathbb{K}$ as a pairing inducing the Poincar\'e duality pairing then $\sum_i e_i \otimes f_i$ plays the role of a representative for the Thom class of the diagonal embedding $M \hookrightarrow M \times M$. 

Suppose $A$ is connected (i.e.\ $A^0 \cong \mathbb{K}$) and consider the double complex of Hochschild chains $C_{*,*}(A,A)$ where $C_{-p,n}(A,A) = ((s\overline{A})^{\otimes p} \otimes A)^{n}$, $\overline{A} \subset A$ denotes the positive degree elements of $A$, and for any graded vector space $V= \bigoplus_{j \in \mathbb{Z}} V^j$, $s^iV$ denotes the $i$-th shifted graded vector space given by $(s^iV)^j=V^{i+j}$ (we write $s=s^1$). Denote by $C_n(A,A)= \bigoplus_{p} C_{-p,n}(A,A)$ and $C_*(A,A) = \bigoplus_n C_n(A,A)$. In this article we study a product $$\star \colon  C_*(A,A) \otimes C_*(A,A) \to C_*(A,A)$$ of degree $k-1$ defined by the formula (see Definition \ref{definition:GH})
$$(\overline{a_1} \otimes \dotsb \otimes \overline{a_p} \otimes a_{p+1}) \star (\overline{b_1} \otimes \dotsb \otimes \overline{b_q} \otimes b_{q+1}) = \sum_i (-1)^{\eta_i}\overline{b_1}\otimes \cdots \otimes \overline{b_{q+1}e_i}\otimes \overline{a_1}\otimes \cdots \otimes \overline{a_p} \otimes a_{q+1}f_i.$$

The above product appears as a secondary operation in the family of algebraic constructions on the Hochschild chain complex of a Frobenius algebra constructed in \cite{TrZe, Kau1, WaWe}. In the context when the underlying dg Frobenius is commutative, the product $\star$ has been studied in \cite{Abb} and \cite{Kla}. In Section \ref{section2}, we observe that the product $\star$ defines a (non-unital) dg algebra structure on the (shifted) \textit{reduced Hochschild complex} $s^{1-k} \overline{C}_*(A,A)$ of any connected  (not necessarily commutative) dg Frobenius algebra of degree $k$. The reduced Hochschild complex is the subcomplex of $C_*(A,A)$ defined as the complement of $C_{0,0}(A,A)=A^0 \cong \mathbb{K}$. We call the induced algebra structure on homology $(s^{1-k}\overline{\HH}_*(A,A),\star)$ the \textit{Goresky-Hingston algebra on the reduced Hochschild homology of a dg Frobenius algebra $A$}. The reason for this name is the analogy and similarities between the properties of the algebraic product $\star$ and the geometrically defined operation of \cite{GoHi} of the same degree. Recently, this analogy has been made mathematically precise: it has been announced in \cite{NaWi} that if $A$ is a Poincar\'e duality model for the rational polynomial differential forms on a simply connected oriented closed manifold $M$ (as constructed in \cite{LaSt}) then the product $\star$ at the level of a relative version of Hochschild homology of $A$ corresponds to a topologically defined version of the Goresky-Hingston product on $\mathrm H^*(LM,M; \mathbb{Q})$, the rational cohomology of the free loop space $LM$ relative to constant loops. 

In this article we give an algebraic proof of the invariance of the isomorphism class of the algebra $(s^{1-k}\overline{\HH}_*(A,A),\star)$ under quasi-isomorphisms of simply connected dg Frobenius algebras in the following sense. Let $\mathbf{DGA}^1_{\mathbb{K}}$ be the category of  unital dg  $\mathbb{K}$-algebras $A$ which are simply connected and non-negatively graded, i.e.\ $A^{<0} =0, \ A^0 \cong \mathbb{K}$ and $A^1=0$. 

\begin{theorem} \label{main} Let $(A, \langle -, -\rangle_A)$ and $(B, \langle -, -\rangle_B)$ be two dg  Frobenius algebras of degree $k$ such that $A,B \in \mathbf{DGA}^1_{\mathbb{K}}$. Suppose that there is a zig-zag of quasi-isomorphisms of dg algebras $$A \xleftarrow{\simeq} \bullet \xrightarrow{\simeq} \dotsb \xleftarrow{\simeq} \bullet \xrightarrow{\simeq} B.$$ Then there is an isomorphism of Goresky-Hingston algebras $$(s^{1-k}\overline{\HH}_*(A,A), \star)  \cong (s^{1-k}\overline{\HH}_*(B,B), \star).$$
\end{theorem}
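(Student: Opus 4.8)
The plan is to transport the entire question from the world of dg Frobenius algebras to the world of singular (Tate--)Hochschild cohomology, where invariance is easier to control. The abstract and introduction already announce that the Goresky--Hingston product $\star$ on reduced Hochschild homology is to be related to the singular Hochschild cohomology algebra, so the first step is to make that comparison precise: construct, for a connected dg Frobenius algebra $A$ of degree $k$, a morphism of graded algebras (ideally an isomorphism)
$$
\bigl(s^{1-k}\overline{\HH}_*(A,A), \star\bigr) \;\cong\; \bigl(\overline{\HH}_{\mathrm{sg}}^*(A,A), \smile\bigr)
$$
between the Goresky--Hingston algebra and (a reduced version of) singular Hochschild cohomology with its cup product. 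The point of passing through $\HH_{\mathrm{sg}}^*$ is that the non-degenerate Frobenius pairing on $A$ identifies the $A$-bimodule $A$ with its dual $A^\vee$ up to a shift by $k$, and under this identification the cyclic/Connes-type bar complex computing reduced Hochschild homology becomes (a truncation of) the complex computing stable Ext, i.e.\ singular Hochschild cohomology; the $\star$-product, which is built from the copairing $\sum_i e_i \otimes f_i$, should go over to the Yoneda/cup product.

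The second step is to establish invariance on the singular Hochschild cohomology side. Here I would invoke (or prove) that for $A \in \mathbf{DGA}^1_{\mathbb K}$ the singular Hochschild cohomology algebra $\HH_{\mathrm{sg}}^*(A,A)$ is invariant under zig-zags of quasi-isomorphisms of dg algebras. This is plausible because $\HH_{\mathrm{sg}}^*$ has a derived-categorical description: it is the graded endomorphism algebra of the identity functor on the singularity category $\mathcal D_{\mathrm{sg}}(A) = \mathcal D^b(A)/\Perf(A)$, or equivalently can be computed as a stabilization of ordinary Hochschild cohomology; a quasi-isomorphism of dg algebras induces an equivalence of derived categories preserving perfect complexes, hence an equivalence of singularity categories, hence an isomorphism of these endomorphism algebras. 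The simple-connectivity and non-negative grading hypotheses are what make the bar-type resolutions well-behaved (finiteness/convergence of the relevant spectral sequences or totalizations) so that the abstract derived statement descends to an honest isomorphism of the explicitly-defined complexes.

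The third step is bookkeeping: one must check that the isomorphism in step one is \emph{natural} enough that the zig-zag of quasi-isomorphisms $A \leftarrow \bullet \to \cdots \to B$ — which a priori is only a zig-zag of plain dg algebras, not of Frobenius algebras — still induces the isomorphism of Goresky--Hingston algebras. The subtlety is that the intermediate objects $\bullet$ in the zig-zag need not carry Frobenius structures, so the product $\star$ is not even defined on them; the resolution is that after applying the step-one identification we are comparing $\HH_{\mathrm{sg}}^*(A,A)$ and $\HH_{\mathrm{sg}}^*(B,B)$, and these \emph{are} defined for the plain dg algebras and \emph{are} connected by the zig-zag via step two. So the logical structure is: $\star$-algebra of $A$ $\cong$ $\HH_{\mathrm{sg}}^*(A)$ $\cong$ $\HH_{\mathrm{sg}}^*(B)$ $\cong$ $\star$-algebra of $B$, with the middle isomorphism coming from the zig-zag and the outer two from the Frobenius structures on $A$ and $B$ individually.

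I expect the main obstacle to be step one: showing that $\star$ on reduced Hochschild \emph{homology} really is multiplicatively identified with the cup product on singular Hochschild \emph{cohomology}, rather than merely being abstractly isomorphic as graded vector spaces. The degree shift $s^{1-k}$, the reduction (removing $C_{0,0}(A,A) = A^0$), and the appearance of the copairing element $\sum_i e_i \otimes f_i$ in the definition of $\star$ all have to be matched precisely with the Yoneda composition and the Tate/stabilization construction; getting the signs ($\eta_i$ in the formula for $\star$) to line up with the Koszul signs in the bar resolution is the kind of thing that is conceptually clear but technically delicate. A secondary obstacle is making the derived-category invariance in step two fully rigorous at the cochain level for dg (not just ordinary) algebras, including checking that ``reduced'' singular Hochschild cohomology — the version matching reduced Hochschild homology — is the correct invariant functor; this likely requires the non-negative grading and $A^1 = 0$ hypotheses in an essential way to guarantee convergence.
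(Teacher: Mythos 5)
Your overall architecture is the same as the paper's: relate $\star$ to the cup product on singular Hochschild cohomology, use quasi-isomorphism invariance of $(\HH_{\sg}^*(-,-),\cup)$, and observe that the intermediate objects of the zig-zag need no Frobenius structure because $\HH_{\sg}^*$ is defined for plain dg algebras. However, your step one as stated contains a genuine gap that hides the real difficulty. There is no isomorphism of the Goresky--Hingston algebra with a ``reduced'' singular Hochschild cohomology, and no obvious quasi-isomorphism-invariant functor playing that role. What the Frobenius structure actually provides is an \emph{injective} algebra map $\iota$ from $(s^{1-k}\overline{\HH}_*(A,A),\star)$ into the full algebra $(\HH_{\sg}^*(A,A),\cup)$ (in the paper: the quasi-isomorphism from the Tate--Hochschild mapping cone $\calD^*(A,A)$ to $\calC_{\sg}^*(A,A)$, which intertwines $\star$ with $\cup$), and the image is a proper subalgebra: in the simply connected case $\HH_{\sg}^i(A,A)$ is $\HH^i(A,A)$ in low degrees and $\HH_{i-k+1}(A,A)$ in high degrees, with the behaviour in degrees $k-1$ and $k$ governed by whether $\chi(A)=\sum_i e_if_i$ vanishes. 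So after invoking invariance of $\HH_{\sg}^*$ you must still prove that the induced isomorphism $\HH_{\sg}^*(A,A)\cong\HH_{\sg}^*(B,B)$ carries the Goresky--Hingston subalgebra of $A$ onto that of $B$. When $\chi(A)\neq 0$ this is automatic, since the subalgebra is exactly the part in degrees $\geq k$; but when $\chi(A)=0$ one has $\HH_{\sg}^k(A,A)\cong\HH_1(A,A)\oplus\HH^k(A,A)$ and degree reasons no longer suffice. The paper handles this by comparing the long exact sequences relating $\HH_*$, $\HH^*$ and $\HH_{\sg}^*$ functorially along the zig-zag, which forces the introduction of the auxiliary mapping cone $\calE^*(A,A)$ of $C_*(A,A^!)\to C^*(A,A)$ (functorial in $A$, unlike $\calD^*(A,A)$, which depends on the pairing) together with the Calabi--Yau identification $s^{-k}A\simeq A^!$. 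Your proposal has no mechanism for this restriction step, and it is precisely where the hypothesis structure of the theorem bites.

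Two smaller points. First, you locate the main obstacle in proving multiplicativity of the comparison map; in the paper that is a short direct computation (the identity $\iota(\alpha)\cup\iota(\beta)=\iota(\alpha\star\beta)$), while the heavy lifting is in showing $\iota$ is a quasi-isomorphism (a homotopy retract constructed in the appendix) and in the restriction argument above. Second, the paper proves invariance of the cup product directly at the cochain level via the zig-zag $\calC_{\sg}^*(A,A)\to\calC_{\sg}^*(A,B)\leftarrow\calC_{\sg}^*(B,B)$ and explicitly avoids the singularity-category description; your derived-category route (equivalence of singularity categories of bimodules, plus multiplicativity and the finiteness hypotheses needed to identify $\HH_{\sg}^*$ with morphisms there) is plausible but would need additional justification that the proposal does not supply.
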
 

In the above statement there is no required compatibility between the quasi-isomorphisms in the zig-zag and the pairings $\langle -, -\rangle_A$ and $\langle -, -\rangle_B$. However, the isomorphism we construct between the Goresky-Hingston algebras of $A$ and $B$ uses both pairings $\langle -, -\rangle_A$ and $\langle -, -\rangle_B$ as well as the quasi-isomorphisms in the zig-zag.

\subsection{Outline of the proof} 

One of the main purposes of this article is to highlight the techniques used in the proof of the above theorem which rely on  the invariance properties of the \textit{singular Hochschild cochain complex}, as introduced in the second  author's thesis and in \cite{Wan} and extended in \cite{RiWa} to the dg setting. Given any dg algebra $A$ (no Frobenius structure required) the singular Hochschild cochain complex $\calC_{\sg}^*(A,A)$ is a dg algebra, with a cup product $\cup$ extending the classical cup product on Hochschild cochains (see Subsection \ref{subsection:cupproduct}). We denote the cohomology of $\calC_{\sg}^*(A,A)$ by $\HH_{\sg}^*(A, A)$ and call it the {\it singular Hochschild cohomology of $A$} (also called {\it Tate-Hochschild cohomology}). Under mild finiteness conditions on $A$,  $\HH_{\sg}^*(A, A)$  is the graded algebra of morphisms from $A$ to itself in the {\it singularity category} $$\DD_{\sg}(A\otimes A^{\op})= \DD^b(A \otimes A^{\op}) / \Perf(A\otimes A^{\op}),$$ i.e.\ the Verdier quotient of the bounded derived category of finitely generated dg $A$-$A$-bimodules  by the full subcategory of perfect dg $A$-$A$-bimodules (\cite{Wan}, \cite{RiWa}). The singularity category was introduced in \cite{Buc} and used in \cite{Orl} to study singularities of algebraic varieties.

The proof of our main theorem may be outlined in three steps.

{\it Step 1:} The isomorphism class of the graded algebra $\HH_{\sg}^*(A, A)$ is invariant under quasi-isomorphisms (see 
Proposition \ref{proposition-7.4}). Namely, if $A$ and $B$ are quasi-isomorphic dg algebras then there is an isomorphism of graded algebras $\HH_{\sg}^*(A, A) \cong \HH_{\sg}^*(B, B).$

{\it Step 2:} When the dg algebra $A$ is equipped with a dg Frobenius structure of degree $k$, there is a smaller complex $\mathcal{D}^*(A,A)$, called the Tate-Hochschild complex of $A$, that also computes $\HH_{\sg}^*(A, A)$ (see Definition \ref{definition:TH}). The complex $\mathcal{D}^*(A,A)$ is defined as the mapping cone of a chain map (see \eqref{mappingcone}) $$\gamma: s^{-k}C_*(A,A) \to C^*(A,A)$$ from the (shifted) Hochschild chain complex of $A$ to the Hochschild cochain complex of $A$. The map $\gamma$ is defined using the Frobenius structure of $A$. The Tate-Hochschild complex  $\mathcal{D}^*(A,A)$ carries a natural product $\star$ extending  the Goresky-Hingston product on $C_*(A,A)$, also denoted by $\star$ above, and the classical Hochschild cup product on $C^*(A,A)$. In fact, the Goresky-Hingston algebra $(s^{1-k}\overline{\HH}_*(A,A), \star)$ is a subalgebra of the cohomology algebra $(\mathrm{H}^*(\mathcal{D}^*(A,A)), \star)$. We  temporarily denote $(\mathrm{H}^*(\mathcal{D}^*(A,A)), \star)$ by $(\THH^*(A,A), \star)$. The isomorphism of graded algebras $$(\HH_{\sg}^*(A, A), \cup) \cong (\THH^*(A,A), \star),$$ which follows from Theorem \ref{thm:homtopyretract} and Proposition \ref{prop:iotastar}, implies that $(\THH^*(A,A), \star)$ is also invariant under quasi-isomorphisms. 

{\it Step 3:} To conclude the desired invariance for $(s^{1-k}\overline{\HH}_*(A,A), \star)$ from the invariance property of $(\THH^*(A,A), \star)$, we must show the following: if $A$ and $B$ are simply connected dg Frobenius algebras that are quasi-isomorphic as dg algebras, then the isomorphism $$(\THH^*(A,A), \star) \cong (\THH^*(B,B), \star)$$ restricts to an isomorphism between subalgebras  $$(s^{1-k}\overline{\HH}_*(A,A), \star) \cong (s^{1-k}\overline{\HH}_*(B,B), \star).$$ To show this we consider two cases: when the Euler characteristic of the underlying Frobenius algebras is non-zero or zero.  In the first case, when the Euler characteristic of $A$ (and hence of $B$) is non-zero,  the result follows immediately from our description of $\THH^*(A,A)$ in terms of $\HH^*(A,A)$ and $\HH_*(A,A)$ (see Proposition \ref{proposition4.7}). Namely in this case $s^{1-k}\overline{\HH}_*(A,A)$ (resp.\ $s^{1-k}\overline{\HH}_*(B, B)$) coincides with the truncation $\THH^{*\geq k}(A, A)$ (resp.\ $\THH^{*\geq k}(B, B)$) and note that $\THH^{*\geq k}(A, A) \cong \THH^{*\geq k}(B, B)$ as graded algebras.  The second case, when the Euler characteristic of $A$ (and hence of $B$) is zero, is more subtle since by Proposition \ref{proposition4.7} we have $s^{1-k}\overline{\HH}_*(A,A) \oplus\HH^k(A,A)\cong \THH^{*\geq k}(A,A)$. This leads us to take a closer look at the functoriality properties of the quasi-isomorphism $$\iota \colon \mathcal{D}^*(A,A) \to \calC_{\sg}^*(A,A)$$ constructed in  Theorem \ref{thm:homtopyretract}. In Proposition \ref{factor}, we factor this quasi-isomorphism as the composition of two quasi-isomorphisms $$\mathcal{D}^*(A,A) \to \mathcal{E}^*(A,A) \to  \calC_{\sg}^*(A,A),$$ where $\mathcal{E}^*(A,A)$ is a new complex constructed using the inverse dualizing complex of $A$. The complex $\mathcal{E}^*(A,A)$ has a better functorial behavior (see Proposition \ref{functoriality} and Corollary \ref{corollary:longexactsequence}) that allows us to conclude the desired result, namely the algebra isomorphism $\THH^{* \geq k}(A, A) \cong THH^{* \geq k }(B, B)$ restricts to $s^{1-k}\overline{\HH}_*(A,A) \cong s^{1-k}\overline{\HH}_*(B, B).$ 

In this article we use $\mathcal{E}^*(A,A)$ merely to treat the Euler characteristic zero case in the proof of our main result. The main advantage of $\mathcal{E}^*(A,A)$ is that it shares similarities with both the Tate-Hochschild complex and the singular Hochschild complex, i.e. $\mathcal{E}^*(A,A)$ is constructed as a mapping cone, like $\mathcal{D}^*(A,A)$, but without using a Frobenius structure, like $\calC_{\sg}^*(A,A)$. In fact, it is plausible that one may develop the whole theory of singular Hochschild cohomology at the chain level entirely in terms of the complex $\mathcal{E}^*(A,A)$  without alluding to $\calC_{\sg}^*(A,A)$. This will be explored elsewhere.

\subsection{Application to string topology}
Let $\mathcal{A}$ be a commutative dg (cdg) algebra  whose cohomology $\mathrm H^*(\mathcal{A})$ is a simply connected graded Frobenius algebra of degree $k$. By the main result of \cite{LaSt}, there exists a cdg Frobenius algebra $(A, \langle -, -\rangle_A)$ such that $A^0\cong \mathbb{K}$, $A^1=0$, and $A$ is quasi-isomorphic to $\mathcal{A}$ through a zig-zag of cdg algebras such that the induced isomorphism on cohomology preserves the graded Frobenius algebra structure. Following \cite{LaSt}, we call $A$ a \textit{Poincar\'e duality (cdg) model} for $\mathcal{A}$. For any simply connected oriented closed manifold $M$ of dimension $k$, the Goresky-Hingston algebra on reduced Hochschild homology induces an algebra structure on $s^{1-k}\overline{\mathrm H}^*(LM;\mathbb{Q})$, the shifted reduced rational cohomology of the free loop space on $M$, by choosing a Poincr\'e duality cdg model $A$ for the cdg $\mathbb{Q}$-algebra $\mathcal{A}(M)$ of rational polynomial differential forms on $M$ and using the isomorphisms of graded vector spaces  $$\overline{\mathrm H}^*(LM;\mathbb{Q} ) \  \cong \  \overline{\HH}_*(\mathcal{A}(M), \mathcal{A}(M)) \ \cong \  \overline{\HH}_*(A,A).$$
The first isomorphism above is induced by the Chen's classical iterated integrals construction or by a well known result of J.D.S. Jones. The second isomorphism follows from the quasi-isomorphism invariance of Hochschild homology. As an immediate consequence of our main theorem we have the following result.

\begin{corollary}
\label{corollary1.2}
\begin{enumerate} 
\item Let $M$  be a simply connected oriented closed manifold of dimension $k$ and $A$ a Poincar\'e duality  model for the cdg algebra of rational differential forms $\mathcal{A}(M)$. The isomorphism class of the algebra structure on $s^{1-k}\overline{\mathrm H}^*(LM;\mathbb{Q})$ induced by the product $\star$ through the isomorphism $\overline{\mathrm H}^*(LM;\mathbb{Q} ) \cong \overline{\HH}_*(A,A)$ is independent of the choice of Poincar\'e duality model $A$ for $\mathcal{A}(M)$. 
\item If $M$ and $M'$ are homotopy equivalent simply connected oriented closed manifolds of dimension $k$, then the algebra structures on $s^{1-k} \overline{\mathrm H}^*(LM;\mathbb{Q})$ and $s^{1-k}\overline{\mathrm H}^*(LM';\mathbb{Q})$ are isomorphic. 
\end{enumerate}
\end{corollary}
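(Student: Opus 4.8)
The plan is to deduce both statements directly from Theorem~\ref{main}; the only work is to check that the Poincaré duality models in play satisfy the hypotheses of that theorem and that the relevant zig-zags of quasi-isomorphisms concatenate.

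For part (1), I would take two Poincaré duality cdg models $A$ and $A'$ for $\mathcal{A}(M)$. By the construction of \cite{LaSt} recalled above, each of $A$, $A'$ is a cdg Frobenius algebra of degree $k$ with $A^{<0}=0$, $A^0\cong\mathbb{Q}$ and $A^1=0$, hence lies in $\mathbf{DGA}^1_{\mathbb{K}}$ (with $\mathbb{K}=\mathbb{Q}$), and each is connected to $\mathcal{A}(M)$ by a zig-zag of quasi-isomorphisms of cdg algebras. Splicing the zig-zag for $A$ with the reverse of the one for $A'$ produces a zig-zag of quasi-isomorphisms of dg algebras $A\xleftarrow{\simeq}\bullet\xrightarrow{\simeq}\dotsb\xleftarrow{\simeq}\bullet\xrightarrow{\simeq}A'$, so Theorem~\ref{main} furnishes an isomorphism of Goresky-Hingston algebras $(s^{1-k}\overline{\HH}_*(A,A),\star)\cong(s^{1-k}\overline{\HH}_*(A',A'),\star)$. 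Composing with the graded-vector-space identifications $s^{1-k}\overline{\mathrm H}^*(LM;\mathbb{Q})\cong s^{1-k}\overline{\HH}_*(\mathcal{A}(M),\mathcal{A}(M))\cong s^{1-k}\overline{\HH}_*(A,A)$ (the first from Chen's iterated integrals, equivalently Jones's theorem, and the second from quasi-isomorphism invariance of Hochschild homology) together with their analogues for $A'$, I would conclude that the algebra structure transported to $s^{1-k}\overline{\mathrm H}^*(LM;\mathbb{Q})$ via $A$ is isomorphic as an algebra to the one transported via $A'$. Since any two Poincaré duality models of $\mathcal{A}(M)$ are linked by such a chain of algebra isomorphisms, the isomorphism class of the induced structure is independent of the choice, which proves part (1).

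For part (2), a homotopy equivalence $f\colon M\to M'$ induces an isomorphism on rational cohomology, so the pullback $f^*\colon\mathcal{A}(M')\to\mathcal{A}(M)$ is a quasi-isomorphism of cdg algebras. Fixing Poincaré duality models $A$ for $\mathcal{A}(M)$ and $A'$ for $\mathcal{A}(M')$, I would splice the zig-zag connecting $A'$ to $\mathcal{A}(M')$, the map $f^*$, and the zig-zag connecting $\mathcal{A}(M)$ to $A$ into a single zig-zag of quasi-isomorphisms of dg algebras between $A$ and $A'$. As in part (1), $A$ and $A'$ are dg Frobenius algebras of degree $k$ in $\mathbf{DGA}^1_{\mathbb{K}}$ because $M$ and $M'$ are simply connected oriented closed $k$-manifolds, so Theorem~\ref{main} applies and gives $(s^{1-k}\overline{\HH}_*(A,A),\star)\cong(s^{1-k}\overline{\HH}_*(A',A'),\star)$; transporting along the respective identifications with $s^{1-k}\overline{\mathrm H}^*(LM;\mathbb{Q})$ and $s^{1-k}\overline{\mathrm H}^*(LM';\mathbb{Q})$ then yields the asserted algebra isomorphism.

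I do not expect any genuine obstacle here; all the content is in Theorem~\ref{main}, and the remaining points are bookkeeping. Two of these deserve a line of care in the writeup: that a Poincaré duality model lies in $\mathbf{DGA}^1_{\mathbb{K}}$ and carries a dg Frobenius structure of degree $k$ (immediate from \cite{LaSt} applied to a simply connected oriented closed $k$-manifold), and --- what makes the splicing of zig-zags harmless --- that Theorem~\ref{main} imposes no compatibility between the zig-zag and the Frobenius pairings, so one may freely concatenate the zig-zags produced by \cite{LaSt} with $f^*$ without tracking orientations or whether Poincaré duality is preserved at intermediate stages. The one subtlety that is easy to misstate is that the conclusion should be phrased at the level of isomorphism classes of algebra structures rather than of specific isomorphisms, since the identifications $\overline{\mathrm H}^*(LM;\mathbb{Q})\cong\overline{\HH}_*(A,A)$ are not canonical.
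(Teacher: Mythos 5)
Your argument is correct and is essentially the paper's proof: both parts are deduced from Theorem \ref{main} by noting that Poincar\'e duality models are simply connected dg Frobenius algebras of degree $k$ and by splicing the zig-zags of quasi-isomorphisms (using $f^*$ in part (2)) before transporting along the standard identifications with $\overline{\mathrm H}^*(LM;\mathbb{Q})$. The paper states this more tersely, but the content and route are the same.
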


\subsection{Related results in the literature} 

The geometric constructions for the Goresky-Hingston operation describe an operation defined at the level at the homology (or chains) of the free loop space relative to constant loops, as described in (\cite{Sul}, \cite{GoHi}, \cite{HiWa}, \cite{NaWi}). The identification between the algebraic product on the reduced Hochschild homology of a Poincar\'e duality cdga model for a simply connected manifold $M$ and a geometric construction of the product on  $\mathrm H^*(LM,M)$ using configuration spaces of two points has been announced in \cite{NaWi}. Hingston and Wahl have announced in \cite{HiWa2} a version of homotopy invariance for the geometric Goresky-Hingston coproduct on $\mathrm H_*(LM,M)$ (with arbitrary coefficients) for certain type of homotopy equivalences satisfying suitable  restrictions. In the non-simply connected case, a counterexample for the homotopy invariance of the geometric Goresky-Hingston coproduct has been announced in \cite{Na}. Naef shows that the coproduct can distinguish homotopy equivalent Lens spaces $L(1,7)$ and $L(2,7)$. 

Proposition \ref{proposition4.7}, together with \cite[Theorem 1.10]{CiFrOa}, yields that the singular Hochschild cohomology of the dg algebra of cochains (with real coefficients) on a simply connected oriented closed manifold $M$ is isomorphic to the {\it Rabinowitz-Floer homology} of the unit cotangent bundle of $M$. We expect that the chain level algebraic structure of the Tate-Hochschild complex  may provide a better understanding of the structure of the geometric operations on Rabinowitz-Floer theory. 

For the relationship between the singular Hochschild cochain complex of an algebra and the Hochschild cochain complex of the dg singularity category, we refer to the recent article \cite{Kel}. 

\subsection{Conventions.} Throughout this article we will work over a fixed field $\mathbb{K}$ of characteristic zero. The sign conventions are obtained from the Koszul sign rule: when $a$ moves past $b,$ a sign change of $(-1)^{|a||b|}$ is required.  We refer to the appendix of \cite{Abb} for more on sign conventions. By {\it dg algebras} we mean differential graded associative algebras over the field $\mathbb K$. 
\subsection{Acknowledgements.} The second author would like to thank Purdue University where several aspects of this article were discussed during a visit to the first author. Both authors would like to thank Bernhard Keller, Ralph Kaufmann, Florian Naef, Nathalie Wahl, and Thomas Willwacher for insightful discussions, comments, and email exchanges. We also thank the anonymous referees for their valuable comments and suggestions.

\section{Goresky-Hingston algebra on the reduced Hochschild homology of a connected dg Frobenius algebra}
\label{section2}
In this section we start by recalling some classical definitions and constructions. Then we describe the Goresky-Hingston algebra on the reduced Hochschild homology of a dg Frobenius algebra.

\subsection{Differential graded associative Frobenius algebras}
\label{section2.1} Let $\mathbb K$ be a field.  Recall that a {\it dg vector space}   $(V, d_V)$ is a graded $\mathbb K$-vector space $V = \bigoplus_{j\in \mathbb Z} V^j$ together with a graded $\mathbb K$-linear map $d_V \colon V \to V$ of degree one (i.e.\ $d_V(V^j) \subset V^{j+1}$) such that $d_V\circ d_V = 0.$ For any element $a \in V^j$ we write $|a| = j$.
 
The {\it dual} of a dg vector space $(V, d_V)$ is defined as the dg vector space $(V^{\vee}, d_{V^{\vee}})$ with $(V^{\vee})^{-j} = \Hom_{\mathbb K}(V^{j}, \mathbb K)$ and $d_{V^{\vee}}(\alpha)(x) = -(-1)^{|\alpha|} \alpha (d_V(x))$ for any homogeneous elements $\alpha \in V^{\vee}$ and $x \in V$.  Let $(U, d_U)$ and $(V, d_V)$ be two dg vector spaces. There is a natural inclusion of dg vector spaces
\begin{align}
\label{dual-isomorphism}
\sigma_{U, V}\colon U^{\vee} \otimes V^{\vee} \hookrightarrow (V \otimes U)^{\vee}, \quad \alpha \otimes \beta \mapsto \big(v \otimes u \mapsto \beta(v) \alpha(u)\big).
\end{align}
If either $U$ or $V$ is finite dimensional as a $\mathbb K$-vector space then the above inclusion becomes an isomorphism.  

A {\it dg algebra} $A=(A, d,  \mu)$ over $\mathbb K$ is a dg $\mathbb{K}$-vector space $(A,d)$ equipped with an associative product $\mu\colon A \otimes A \to A$ of degree zero which is a cochain map, i.e.\ $\mu$ satisfies the identity $\mu \circ (d \otimes \text{id} + \text{id} \otimes d)= d \circ \mu$. A dg algebra is said to be {\it unital} if there is a map $u\colon \mathbb{K} \to A$ such that $\mu \circ (u \otimes \text{id}) = \text{id} = \mu \circ (\text{id} \otimes u)$. Denote by $1_{A}$ or just by $1$ the image of $1_{\mathbb{K}}$ under $u$.  

\begin{definition}\label{definition2.1} Let $k$ be a positive integer.  A {\it dg Frobenius algebra of degree $k$} is a non-negatively graded dg algebra $(A,d, \mu)$ equipped with a pairing $\langle -, -\rangle\colon A \otimes A \to \mathbb{K}$ such that
\begin{enumerate}[(i)]
\item $\langle -, -\rangle$ is of degree $- k$, namely, it is possibly non-zero only on $A^i \otimes A^{k-i}$ for any $i=0,\dotsb,k$
\item $\langle -, -\rangle$ is non-degenerate, namely, the induced map 
$$\rho\colon A \to A^{\vee}, \quad a \mapsto (b \mapsto \langle a, b\rangle) 
$$
is an isomorphism of degree $-k$
\item $\langle \mu(a\otimes b),c\rangle = \langle a,\mu(b\otimes c)\rangle$ \quad for any $a,b,c \in A$
\item $\langle a,b\rangle = (-1)^{|a| |b|}\langle b,a\rangle $ \quad for any $a,b \in A$ 
\item $\langle d(a),b\rangle  = -(-1)^{|a|}\langle a,d(b)\rangle $ \quad  for any $a,b \in A$.
\end{enumerate}
\end{definition}

The non-degeneracy of the pairing implies that $A$ is finite dimensional as a $\mathbb{K}$-vector space and $A^i=0$ for $i>k$. It follows that the inclusion in (\ref{dual-isomorphism}) induces an isomorphism $\sigma_{A, A} \colon A^{\vee} \otimes A^{\vee} \xrightarrow{\cong} (A\otimes A)^{\vee}$.  
We will write $\mu(a\otimes b)=ab$. 

Conditions (iii)-(v) imply that $\rho\colon A \to A^{\vee}$ is a map of dg $A$-$A$-bimodules of degree $-k$. Recall the $A$-$A$-bimodule structure on $A^{\vee}$ is given by $$ 
(a\otimes b)\cdot \beta (c) = (-1)^{|\beta| (|a| + |b|) + |a| (|b| + |c|)} \beta(bca), \quad \text{for any $\beta \in A^{\vee}$ and $a, b, c \in A$.}
$$

Denote by $\Delta\colon A \to A \otimes A$ the coproduct of degree $k$ defined by the composition
$$A  \xrightarrow{ \rho} A^\vee \xrightarrow{\mu^{\vee}} (A\otimes A)^{\vee} \xrightarrow{\sigma_{A, A}^{-1}}  A^{\vee} \otimes A^{\vee} \xrightarrow{ (\rho \otimes \rho)^{-1}} A \otimes A.$$ Denote $\Delta(1):= \sum_{i } e_i \otimes f_i \in A \otimes A$. In the algebraic literature $\sum_{i } e_i \otimes f_i$ is called the \textit{Casimir element} of the Frobenius algebra $A$. If we think of $A$ as a cochain model for a manifold $M$, then we may think of the Casimir element as a representative for the Thom class of the diagonal embedding $M \hookrightarrow M \times M$.
\\
\\
The following identities regarding the Carimir element will be useful in our algebraic manipulations. 

\begin{lemma}\label{lemma:bascipropertyeifi}
Let $a \in A$. We have the following identities
\begin{enumerate}
\item $(-1)^{|a| k} \sum_i 
 \langle f_i, a \rangle e_i =a = (-1)^{k-|a|}  \sum_i  \langle e_i, a \rangle f_i$
 \item $\sum_i e_i \otimes f_i = \sum_i(-1)^{|e_i||f_i|+k} f_i \otimes e_i$
\item $\sum_i de_i \otimes f_i = -\sum_i (-1)^{|e_i|} e_i \otimes df_i$
\item $\sum_i a e_i \otimes f_i = (-1)^{|a| k} \sum_i e_i\otimes f_i a$
\item $ \sum_i (-1)^{|e_i||a|} e_i a \otimes f_i = \sum_i (-1)^{|e_i| |a|} e_i \otimes a f_i$. 
\end{enumerate}
\end{lemma}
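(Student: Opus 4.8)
The plan is to prove the five identities of Lemma \ref{lemma:bascipropertyeifi} by unwinding the definition of the Casimir element $\Delta(1) = \sum_i e_i \otimes f_i$ as the image of $1$ under the composition $(\rho\otimes\rho)^{-1}\circ \sigma_{A,A}^{-1}\circ \mu^\vee \circ \rho$, together with the fact that $\rho \colon A \to A^\vee$ is a degree $-k$ isomorphism of dg $A$-$A$-bimodules. The key preliminary observation is a characterization of $\Delta(1)$: for every $a\in A$ one has $\rho(a) = \mu^\vee(\rho(1))\circ \sigma_{A,A}(\rho\otimes\rho)(\ldots)$ type identities, but more usefully, the defining property $\langle \Delta(1), \alpha\otimes\beta\rangle$-pairings translate into the single statement that $\sum_i \langle e_i, a\rangle\, \langle f_i, b\rangle$ computes $\langle ?, ?\rangle$ applied to $\mu$; concretely, $\sum_i \langle e_i, b\rangle f_i$ and $\sum_i \langle f_i, a\rangle e_i$ are the "reproducing" expressions for elements of $A$.

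First I would establish (1), which is the reproducing property and the engine for everything else. Since $\rho$ is an isomorphism, it suffices to check that $\rho$ applied to both sides of each equality agree, or equivalently to pair both sides against an arbitrary $b\in A$ using $\langle -, -\rangle$. Tracing the definition of $\Delta$ through $\mu^\vee$ and $\sigma_{A,A}^{-1}$, the element $\Delta(1)=\sum_i e_i\otimes f_i$ is precisely characterized by $\sum_i \langle e_i, x\rangle\langle f_i, y\rangle = \langle 1, \mu(x'\otimes y')\rangle$-style formulas (with appropriate $\rho^{-1}$'s and Koszul signs); from this one extracts $(-1)^{|a|k}\sum_i \langle f_i, a\rangle e_i = a$ by pairing with a test element and using non-degeneracy, and the second equality in (1) follows symmetrically (or from (2)). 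I expect the bookkeeping of signs from the degree $-k$ maps $\rho$, $\mu^\vee$, and the twist $\sigma_{A,A}$ to be the main obstacle here — each arrow contributes a Koszul sign and the shift by $k$ must be threaded carefully, matching the sign conventions fixed in the Conventions paragraph.

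Next, (2) is the (graded) symmetry/cocommutativity of $\Delta(1)$: it follows from property (iv) of the pairing, $\langle a,b\rangle = (-1)^{|a||b|}\langle b, a\rangle$, which makes $\mu^\vee$ symmetric up to the expected sign, so applying $\sigma_{A,A}^{-1}$ and $(\rho\otimes\rho)^{-1}$ transports this to $\sum_i e_i\otimes f_i = \sum_i (-1)^{|e_i||f_i|+k} f_i\otimes e_i$; alternatively, one derives (2) directly from (1) by writing $e_i\otimes f_i = \sum_{i,j}(\ldots)$ and re-expanding with the reproducing formula. Identity (3) is the statement that $\Delta(1)$ is a cycle in $A\otimes A$, i.e.\ $d(\sum_i e_i\otimes f_i)=0$, which is immediate because $\Delta$ is a chain map of degree $k$ (as $\rho$, $\mu^\vee$, $\sigma_{A,A}$ are all chain maps up to the sign $(-1)^k$) and $d(1)=0$; expanding $d(e_i\otimes f_i) = de_i\otimes f_i + (-1)^{|e_i|}e_i\otimes df_i$ gives exactly (3). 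Finally, (4) and (5) encode that $\rho$ is a morphism of $A$-$A$-bimodules: the displayed bimodule structure on $A^\vee$ together with $\rho^{-1}\circ(a\otimes b)\cdot = (\text{action})\circ \rho^{-1}$ shows that multiplying $\Delta(1)$ on the left/right by $a$ can be moved across the tensor factor; concretely (4) follows by pairing $\sum_i ae_i\otimes f_i$ and $(-1)^{|a|k}\sum_i e_i\otimes f_i a$ against arbitrary elements and invoking (1) plus property (iii), and (5) follows the same way using the "inner" multiplication $e_i a$ versus $a f_i$ and associativity of $\mu$. Throughout, the substantive content is just (1) and the bimodule-map property of $\rho$; (2)–(5) are formal consequences, and the only real work is verifying that all Koszul signs and the degree shift $k$ are consistent — which I would do once carefully in the proof of (1) and then reuse.
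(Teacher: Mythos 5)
Your proposal follows essentially the same route as the paper: derive the reproducing identity (1) from the defining relation $\sigma_{A,A}\circ(\rho\otimes\rho)\circ\Delta=\mu^{\vee}\circ\rho$ together with non-degeneracy of the pairing, and then obtain (2)--(5) as formal consequences using the symmetry of $\langle-,-\rangle$, its compatibility with $d$, and the Frobenius property (iii), exactly as the paper does (it proves (1), gets (2) by re-expanding via (1), and dispatches the rest by ``the similar argument''). Your alternative derivation of (3) from the fact that $\Delta$ is a chain map up to the sign $(-1)^{k}$ is a legitimate minor variant, not circular, since the paper establishes that $\rho$ commutes with the differentials (conditions (iii)--(v)) before the lemma.
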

\begin{proof}
By definition, we have 
$\sigma_{A, A}\circ (\rho\otimes \rho) \circ \Delta = \mu^{\vee} \circ \rho$. It follows that for any $a, b \in A$ 
\[
\sum_i (-1)^{|e_i|k} \langle f_i, a\rangle \langle e_i, b \rangle =\langle a, b\rangle. 
\]
By the non-degeneracy of $\langle -, - \rangle$ we obtain 
\[
(-1)^{|a| k} \sum_i \langle f_i, a\rangle e_i = a = (-1)^{k-|a|} \sum_i \langle e_i, a \rangle f_i,
\] 
where we implicitly  use the fact that $\langle f_i , a \rangle$ is zero if $|f_i|+|a|\neq k$.

The second assertion follows by noting
\[\small
\sum_{i} e_i \otimes f_i = \sum_{i, j} (-1)^{|f_i|k} e_i  \otimes   \langle f_j, f_i \rangle e_j  = \sum_{i, j} (-1)^{|f_i|k+|f_i||f_j| } \langle f_i, f_j \rangle e_i  \otimes    e_j  =  \sum_j (-1)^{|e_j||f_j| +k} f_j\otimes e_j,
\]
where the first and third equalities follow from the first assertion. Here, we  also use the fact that  $\langle f_i, f_j \rangle = 0$  if $|f_i|\neq |e_j|$. The similar argument yields the remaining assertions. 
\end{proof}

\begin{remark}\label{remark:usefulidentities}
The following equations are useful in keeping track of the signs. Using the Koszul sign rule, Lemma \ref{lemma:bascipropertyeifi} yields the following identities in $A \otimes V \otimes A$ for  any dg vector space $V$
\begin{align*}
\sum_i (-1)^{|x||e_i|} a e_i \otimes x \otimes f_i & = \sum_i (-1)^{|x||e_i|+|a|(|x|+k)} e_i \otimes x \otimes f_i a\\
\sum_i (-1)^{|e_i|(|a|+|x|)} e_i a \otimes x \otimes f_i &= \sum_i (-1)^{|e_i|(|a|+ |x|)+|a||x|} e_i \otimes x \otimes af_i,
\end{align*}
 where $x\in V$. 
\end{remark}

It follows from Lemma \ref{lemma:bascipropertyeifi}  that $\Delta\colon A \to A \otimes A$ is a map of dg $A$-$A$-bimodules of degree $k$. Lemma \ref{lemma:bascipropertyeifi} also implies that $\Delta$ defines a dg coassociative coalgebra structure on $A$ (of degree $k$) with counit $\varepsilon\colon A \to \mathbb{K}$, $\varepsilon(a)=\langle a,1\rangle $, where $1 \in A^0$ denotes the unit of $A$. Namely,  we have  $ (\Delta \otimes \id)\circ \Delta = (-1)^{k} (\id \otimes \Delta) \circ \Delta$ and $ (\id \otimes \epsilon) \circ \Delta = \id = (-1)^k (\epsilon \otimes \id) \circ \Delta$.

\begin{proposition}\label{remark:CY}
Any  dg Frobenius algebra of degree $k$ satisfies the \lq\lq $k$-Calabi-Yau'' property,  i.e. there is a quasi-isomorphism of dg $A$-$A$-bimodules between $s^{-k}A$ and  the {\it inverse dualizing complex} $A^!: = {\rm RHom}_{A^e}(A, A^e) $ introduced in \cite{VdB}.
\end{proposition}
\begin{proof}
Since $A$ is dg Frobenius of degree $k$, it follows that the enveloping algebra $A^e = A \otimes A^{\rm op}$ is dg Frobenius of degree $2k$ with pairing $\langle a_1 \otimes b_1, a_2 \otimes b_2\rangle := (-1)^{|a_2||b_1|} \langle a_1, a_2\rangle \langle b_1, b_2 \rangle$. In particular, it induces a natural isomorphism  $ s^{2k} A^e \cong(A^e)^{\vee}$ of right dg $A^e$-modules. Thus, we have the following natural quasi-isomorphisms of dg $A$-$A$-bimodules (i.e.\ dg left $A^e$-modules)
\begin{align}\label{align:leftrightmodules}
s^{-k} A  \cong {\rm RHom}_{\text{mod$-A^e$}}(s^{2k} A^e, s^k A)   \cong {\rm RHom}_{\text{mod$-A^e$}}((A^e)^{\vee}, A^{\vee}) \cong {\rm RHom}_{A^e}(A, A^e) 
\end{align}
where the first (quasi-)isomorphism follows since $s^{2k} A^e$ is dg projective and we have a natural isomorphism $s^{-k} A \cong{\rm Hom}_{\text{mod$-A^e$}}(s^{2k} A^e, s^k A)$ of left dg $A^e$-modules, the second one follows since $s^{2k}A^e \cong(A^e)^{\vee}$ and $s^k A \cong A^{\vee}$ as right dg $A^e$-modules, and the last one follows since the functor 
$$(-)^{\vee} \colon \text{$A^e$-mod}  \to \text{mod$-A^e$}, \quad X \mapsto X^{\vee}$$ is an equivalence between left and right finite-dimensional dg $A^e$-modules (since $(X^{\vee})^{\vee} \cong X$).

On the other hand, by resolving $A$ as an $A$-$A$-bimodule using the bar resolution,  $A^!$ can be modeled as the Hochschild cochain complex $C^*(A, A^e)$. Recall that the $A$-$A$-bimodule structure on $C^*(A, A^e)$ is induced by the \lq inner action' of $A^e$ on $A^e$:
\begin{align}\label{inner-action}
(a \otimes b)\cdot (m \otimes n) = (-1)^{|a|(|b|+|m|)+|b||m|} mb \otimes an.
\end{align}
 Then the  quasi-isomorphism \eqref{align:leftrightmodules} can be lifted to the following map of dg $A$-$A$-bimodules
\begin{align}\label{align:CY}
\varrho_A \colon s^{-k}A \to C^*(A, A^e)
\end{align}
which sends $s^{-k}a$ to $\sum_i (-1)^{k|a|+(|a|-k)|e_i|} e_i a \otimes f_i \in C^0(A, A^e) = A^e$. Here,  we stress that the $A$-$A$-bimodule structure on $C^0(A, A^e) = A^e$ is given by the inner action \eqref{inner-action}. 
\end{proof}

\subsection{Hochschild chains and cochains}
\label{subsection2.2} Let $(A=\mathbb{K}.1 \oplus \overline{A}, d, \mu)$ be an augmented dg algebra where $\overline{A}$ is the kernel of the augmentation map. Note there is an isomorphism $\overline{A} \cong A / \mathbb{K}.1$. 

Recall that for any dg vector space $(V, d)$ we denote by $(s^iV, s^id)$ the $i$-th shifted space given by $(s^iV)^j = V^{i+j}$  and $s^id(v)=(-1)^i d (s^iv)$ for any $v\in V$. For simplicity, we write $\overline{a} $ for the element $sa\in s\overline{A} $ where $a \in \overline A$.

\begin{definition} \label{definition:hochschildchain}
Denote by $C_{-m,n}(A,A) = ((s\overline{A})^{\otimes m} \otimes A)^{n}$, i.e.\ elements in $(s\overline{A})^{\otimes m} \otimes A$ of total degree $n$. Let $C_n(A,A) = \bigoplus_{m \in \mathbb{Z}_{\geq 0}} C_{-m,n}(A,A)$ and $C_*(A,A)= \bigoplus_{n \in \mathbb{Z}}C_n(A,A)$. The \textit{Hochschild chain complex of $A$} is the complex $(C_*(A,A), \partial= \partial_v + \partial_h)$
where $\partial_v$ is the {\it internal} differential given by
\begin{equation*}
\begin{split}
\partial_v(\overline{ a_1}\otimes \cdots \otimes \overline{a_m}\otimes a_{m+1})=&-\sum_{i=1}^m(-1)^{\epsilon_{i-1}} \overline{a_1}\otimes \cdots \otimes \overline{a_{ i-1}} \otimes
\overline{d(a_i)} \otimes \overline{a_{i+1}}\otimes \cdots \otimes a_{m+1} \\
& +(-1)^{\epsilon_m} \overline{a_1}\otimes\cdots \otimes \overline{a_m} \otimes d(a_{m+1})
\end{split}
\end{equation*}
 and $\partial_h$ is the {\it external} differential given by
\begin{equation*}
\begin{split}
\partial_h(\overline{a_{1}}\otimes \cdots \otimes \overline{a_m}\otimes a_{m+1})=&\sum_{i=1}^{m-1} (-1)^{\epsilon_i} \overline{ a_1} \otimes \cdots \otimes \overline{a_{i-1}}\otimes \overline{a_{i}a_{i+1}}\otimes \overline{a_{i+2}} \otimes\cdots \otimes 
a_{m+1}\\
&-(-1)^{\epsilon_{m-1}} \overline{a_1}\otimes \cdots \otimes \overline{a_{m-1}}\otimes a_ma_{m+1}\\
&+ (-1)^{(|a_2|+
\dotsb+ |a_{m+1}| - m +1)|a_1| } \overline{a_2}\otimes \cdots \otimes \overline{a_m}\otimes a_{m+1}a_1.
\end{split}
\end{equation*}
Here we denote $\epsilon_i= |a_1| + \dotsb + |a_i| - i$ and  $\epsilon_0= 0$.
\end{definition}
\begin{remark}
Note that an element $
\overline{a_1}\otimes \cdots \otimes \overline{a_m}\otimes a_{m+1} \in (\sA)^{\otimes m} \otimes A$ belongs to $C_n(A, A)$ if and only if $|a_1| + |a_2| + \dotsb + |a_{m+1}| - m = n$. The differential $\partial$ on $C_*(A,A)$ is of degree $+1$. We use lower index notation in $C_*(A,A)$ to distinguish from the Hochschild cochain complex defined below.
\end{remark}

\begin{definition}
\label{definition-cohomology}
Denote by $C^{m,n}(A,A)= \text{Hom}^n_{\mathbb{K}}( (s\overline{A})^{\otimes m}, A)$, i.e.\ $\mathbb{K}$-linear maps $(s\overline{A})^{\otimes m} \to A$ of degree $n \in \mathbb Z$. Let $C^n(A,A) = \prod_{m \in \mathbb{Z}_{\geq 0}}  C^{m, n}(A, A)$ and $C^*(A,A) = \bigoplus_{n \in \mathbb{Z}} C^n(A,A)$. 
The \textit{Hochschild cochain complex of $A$} is the complex $(C^*(A,A), \delta= \delta^v + \delta^h)$ where $\delta^v$ is the {\it internal} differential given by
\begin{equation*}
\begin{split}
\delta^v(f)(\overline{a_{1}}\otimes \cdots\otimes \overline{a_m})={} &d(f(\overline{a_1}\otimes \cdots \otimes \overline{a_ m})) +\sum_{i=1}^m (-1)^{|f|+\epsilon_{i-1}} f(\overline{a_{1}}\otimes \cdots \otimes \overline{d(a_i)} \otimes  \cdots \otimes \overline{a_m}), 
\end{split}
\end{equation*}
and  $\delta^h$ is the {\it external} differential given by
\begin{equation*}
\begin{split}
\delta^h(f)(\overline{a_{1}} \otimes \cdots \otimes \overline{a_{m+1}})=& -(-1)^{(|a_1|-1)|f|} a_1 f(\overline{a_{2}}\otimes \cdots \otimes\overline{a_{ m+1}})  \\
&- \sum_{i=1}^m (-1)^{|f|+\epsilon_i}f(\overline{a_{1}}\otimes \cdots \otimes \overline{a_{ i-1}}\otimes\overline{ a_ia_{i+1} }\otimes \overline{a_{i}}\otimes \cdots \otimes \overline{a_{ m+1}}) \\
&+(-1)^{|f|+\epsilon_{m}} f(\overline{a_{1}}\otimes \cdots \otimes \overline{a_ m})a_{m+1},
\end{split}
\end{equation*}
where $\epsilon_i =  |a_1| + \dotsb + |a_i| - i$ and $\epsilon_0 =0$.
\end{definition}
Using similar formulas as the ones above, we may define for any dg $A$-$A$-bimodule $M$ the Hochschild chain and cochain complexes $C_*(A,M)$ and $C^*(A,M)$ by setting respectively  $$C_{n}(A,M)= \bigoplus_{m \geq 0} ((s \overline{A} )^{\otimes m} \otimes M)^n \quad \text{and} \quad C^n(A,M)= \prod_{m\geq 0} \text{Hom}^n_{\mathbb{K}}( (s\overline{A})^{\otimes m},M).$$  We denote $\HH^i(A,M) = \mathrm H^{i}( C^*(A,M))$ and $\HH_i(A,M)= \mathrm H^{-i}(C_*(A,M))$ for any $i \in \mathbb Z$.

\subsection{Goresky-Hingston algebra on reduced Hochschild homology}
Let $A$ be a \textit{connected} dg Frobenius algebra of degree $k>0$. In particular $A^0 \cong \mathbb{K} \cong A^k$. We follow the notation of Subection \ref{section2.1}.
\begin{definition} 
\label{definition:GH}
Define a product $\star\colon  C_*(A,A) \otimes C_*(A,A) \to C_*(A,A)$ of degree $k-1$ as follows: for any $\alpha = \overline{a_1} \otimes \dotsb \otimes \overline{a_p} \otimes a_{p+1}$ and $\beta = \overline{b_1} \otimes \dotsb \otimes \overline{b_q} \otimes b_{q+1}$ let
$$\alpha \star \beta  =\sum_i (-1)^{\eta_i}\overline{b_1}\otimes \cdots \otimes \overline{b_{q+1}e_i}\otimes \overline{a_1}\otimes \cdots \otimes \overline{a_p} \otimes a_{p+1}f_i,$$
where $\eta_i = |\alpha| |f_i| +|b_{q+1}| + (|\alpha|+k-1) (|\beta| +k-1)$. 
The product $\star$ induces a degree zero product on the $(1-k)$-shifted graded vector space $s^{1-k}C_*(A,A)$.
\end{definition}
Note that $\star$ does not satisfy the Leibniz rule with respect to the Hochschild chains differential $\partial$. In fact, 
if $p>0$ and $q>0$, we have (cf. Remark \ref{remark:iotaleibniz})
\begin{align}\label{leibniz-rule}
\partial( \alpha \star \beta) -\partial(\alpha) \star \beta - (-1)^{|\alpha|+k-1}\alpha \star \partial( \beta)=0,
\end{align}
but if $p=0$, so that $\alpha= a_1 \in C_{0, *}(A, A) =A$, then
$$\partial(\alpha \star \beta) -\partial(\alpha) \star \beta - (-1)^{|\alpha|+k-1}\alpha \star \partial(\beta)= \sum_i (-1)^{\eta_i + |\beta|-1-|b_{q+1}|} \overline{b_1} \otimes \dotsb \otimes \overline{b_q} \otimes b_{q+1}e_ia_1f_i.$$
A completely analogous computation yields that if $q=0$ there is a similar obstruction for $\star$ to satisfy the Leibniz rule. 
However, note that by degree reasons $e_ia_1f_i$ is only non-zero if $a_1 \in A^0 \cong \mathbb{K}$, and, in such case, $e_ia_1f_i \in A^k\cong \mathbb{K}$. Hence, $\star$ induces a non-unital  dg  algebra structure on the reduced Hochschild chain complex, defined as follows. 

\begin{definition} The \textit{reduced Hochschild chain complex}, denoted by $\overline{C}_*(A,A)$,  of a connected non-negatively graded dg algebra $A$ is the subcomplex of $C_*(A,A)$ defined as the \textit{complement} of $C_{0,0}(A,A)=A^0\cong \mathbb{K} \subset C_*(A,A)$. More precisely, $\overline{C}_*(A,A)$ is the total complex of the  sub-double complex $\overline{C}_{*,*}(A,A) \subset C_{*,*}(A,A)$ given by  $\overline{C}_{0,0}(A,A)=0$ and $\overline{C}_{i,j}(A,A) = C_{i,j}(A,A)$ for all pairs of integers $(i,j) \neq (0,0)$. 
\end{definition}
\begin{remark}
\label{remark-simply} Note that if $A$ is a simply connected dg algebra (i.e.\ $A^0 \cong \mathbb{K}$ and $A^1 =0$) then $$\overline{\HH}_i(A,A) = \begin{cases}
\HH_{i}(A,A) & \text{if $i>0$}\\
0 & \text{otherwise,}
\end{cases}
$$
since in this case we have $\overline C_{*\leq 0}(A, A) = 0$ and $\overline C_{*>0}(A, A) = C_{*>0}(A, A)$. In other words, we have $\HH_*(A, A)\cong \HH_0(A, A) \oplus \overline{\HH}_*(A, A) \cong \mathbb{K} \oplus \overline{\HH}_*(A, A)$.
\end{remark}

The following proposition is now easy to check. 

\begin{proposition} Let $A$ be a connected dg Frobenius algebra of degree $k$. Then the triple $(s^{1-k}\overline{C}_*(A,A), \partial, \star)$ is a (non-unital) dg algebra.
\end{proposition}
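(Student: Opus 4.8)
The plan is to verify the two defining conditions of a non-unital dg algebra for $(s^{1-k}\overline{C}_*(A,A), \partial, \star)$: first, that the product $\star$ restricts to a well-defined operation on the reduced complex $\overline{C}_*(A,A)$; second, that $\star$ is associative; and third, that $\partial$ is a derivation with respect to $\star$ on the reduced complex. The degree bookkeeping is already arranged so that after the $(1-k)$-shift the product has degree zero, so nothing further is needed there. The first point is essentially the observation made just before the statement: the only way $\star$ can fail to close on $C_*(A,A)$ in a way that interacts badly with the differential is the term $e_i a_1 f_i$ appearing when $\alpha = a_1 \in C_{0,*}(A,A) = A$, and by the degree constraint $|e_i| + |a_1| + |f_i| = k$ this is nonzero only when $a_1 \in A^0 \cong \mathbb{K}$, i.e. precisely the piece $C_{0,0}(A,A)$ that has been excised. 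So on $\overline{C}_*(A,A)$ the Leibniz failure displayed in \eqref{leibniz-rule} vanishes identically, and one checks directly from the formula in Definition \ref{definition:GH} that $\alpha \star \beta$ again lies in $\overline{C}_*(A,A)$ whenever $\alpha, \beta$ do (the output always has at least $q+1 \geq 1$ tensor factors in the bar part unless $q = 0$, and when $q=0$ one reruns the symmetric version of the same argument).

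Next I would prove associativity of $\star$. This is a direct computation: expand $(\alpha \star \beta) \star \gamma$ and $\alpha \star (\beta \star \gamma)$ using the formula, introducing two independent copies $\sum_i e_i \otimes f_i$ and $\sum_j e_j \otimes f_j$ of the Casimir element. Both sides become a sum over $i$ and $j$ of a single tensor word in which the bar-letters of $\gamma$, then $\beta$, then $\alpha$ appear in order, with the Casimir legs $e_i, f_i, e_j, f_j$ distributed among the junctions and the final slot. The key algebraic input is Lemma \ref{lemma:bascipropertyeifi}, especially items (4) and (5), which let one slide the element $e_i$ (resp. $f_i$) past other algebra elements and past the second Casimir element; using these one matches the two tensor words letter for letter, and then it remains only to check that the accumulated Koszul signs $\eta_i$, $\eta_j$ agree. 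The sign comparison is the tedious part but is mechanical given the stated sign conventions.

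Finally I would verify the Leibniz identity $\partial(\alpha \star \beta) = \partial(\alpha) \star \beta + (-1)^{|\alpha| + k - 1} \alpha \star \partial(\beta)$ on $\overline{C}_*(A,A)$. Here I would split $\partial = \partial_v + \partial_h$ and handle the two pieces. For $\partial_v$, the internal differential acts letter by letter, so the only subtlety is the letters $b_{q+1} e_i$ and $a_{p+1} f_i$ created by $\star$: applying $\partial_v$ produces $d(b_{q+1}) e_i \pm b_{q+1} d(e_i)$ and similarly for $a_{p+1} f_i$, and the $d(e_i)$, $d(f_i)$ terms are absorbed using Lemma \ref{lemma:bascipropertyeifi}(3) (equivalently $\sum_i d e_i \otimes f_i = -\sum_i (-1)^{|e_i|} e_i \otimes d f_i$), which exactly cancels them against each other. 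For $\partial_h$, the cyclic/multiplication terms of the Hochschild differential applied to $\alpha \star \beta$ reorganize into the $\partial_h$ terms of $\alpha$ and of $\beta$ separately — the cyclic term of $\partial_h$ on $\alpha \star \beta$ wraps the last letter $a_{p+1} f_i$ around to multiply the first letter $b_1$, and one uses items (4) and (5) of the Lemma to move $e_i, f_i$ around and re-express this as the cyclic term of $\partial_h \alpha$ fed into $\star \beta$ — with the single boundary contribution that would violate Leibniz being exactly the $e_i a_1 f_i$ term identified above, which is zero on the reduced complex. The expected main obstacle is none of the conceptual steps but rather the sign verification in the $\partial_h$ computation: keeping the Koszul signs consistent through the cyclic rotation and the application of the bimodule identities in Remark \ref{remark:usefulidentities} is where errors are easiest to make, and I would organize that by fixing once and for all the convention that every transposition of homogeneous symbols costs $(-1)^{|\cdot||\cdot|}$ and checking the shifted-degree factor $(-1)^{|\alpha| + k - 1}$ against the $\eta_i$ in Definition \ref{definition:GH} on a couple of low-length examples before committing to the general identity.
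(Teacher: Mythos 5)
Your proposal is correct and is essentially the verification the paper has in mind: the statement appears right after the computation of the failure of the Leibniz rule, with the observation that the offending term $e_i a_1 f_i$ (and its mirror for $q=0$) can only be nonzero for $a_1\in A^0\cong\mathbb{K}$, so it vanishes on $\overline{C}_*(A,A)$; the paper then declares the proposition ``easy to check,'' i.e.\ exactly the direct check you outline, with Lemma \ref{lemma:bascipropertyeifi} (items (3), (4), (5), read through Remark \ref{remark:usefulidentities}) supplying the Casimir manipulations for the internal, cyclic, and junction terms, and item (4) alone already sufficing for associativity. The only genuinely different ingredient the paper offers is deferred: by Proposition \ref{prop:iotastar} and Remark \ref{remark:iotaleibniz}, the injective chain map $\iota$ into $(\calC_{\sg}^*(A,A),\cup)$ intertwines $\star$ with the associative chain-level cup product, so both the Leibniz identity \eqref{leibniz-rule} and associativity can be read off from injectivity of $\iota$ without any sign chase — at the cost of first setting up the singular Hochschild machinery, whereas your route is self-contained within Section \ref{section2}. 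Two small points to tidy: closure of $\star$ on $\overline{C}_*(A,A)$ needs no case distinction at $q=0$, since the output always has at least one bar factor; and in the Leibniz check there are two boundary obstructions (the $p=0$ term $e_ia_1f_i$ and the symmetric $q=0$ term), both killed on the reduced complex by the same degree argument.
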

\begin{definition}
We call $(s^{1-k}\overline{\HH}_*(A,A), \star)$ the \textit{Goresky-Hingston algebra} on the reduced Hochschild homology of the connected dg Frobenius algebra $A$. 
\end{definition}

\begin{remark} The product $\star$ has been studied in the context of \textit{commutative} dg Frobenius algebras in \cite{Abb}, \cite{Kla}. In the commutative case, the complement of $C_{0,*}(A,A)=A\subset C_*(A,A)$ is a subcomplex of $C_*(A,A)$. The fact that $\star$ can be extended to the associative possibly non-commutative case has also been observed in \cite{Kau}.
\end{remark}

\begin{remark}\label{extension} If the Euler characteristic $\chi(A): = \mu\circ  \Delta(1) = \sum_i e_i f_i =0$ then the Leibniz rule \eqref{leibniz-rule} for $\star$ and the differential $\partial$ of the Hochschild chain complex always holds, namely, the failure of $\star$ in being a chain map vanishes in this case.  Hence, when $\chi(A)=0$ the Goresky-Hingston product is well-defined on the whole complex $\HH_*(A, A)$. 
\end{remark}

\section{Singular Hochschild cohomology algebra}
\label{section3}
We recall the definition of the singular Hochschild cochain complex and its cup product, as well as its invariance properties, following \cite{Wan} and \cite{RiWa}, that will be used in the proof of our main theorem. Throughout this section, we fix $A$ to be a unital dg algebra (no Frobenius structure is assumed).
\subsection{Singular Hochschild cochain complex} For any unital dg algebra $A$ and non-negative integer $p$ let $\Barr_{-p}(A):= A\otimes (\sA)^{\otimes p} \otimes A$. Note that $\Barr_{-p}(A)$ is a dg $A$-$A$-bimodule with differential 
\begin{align*}
d_{-p}(a_0 \otimes \overline{a_1} \otimes \dotsb \otimes \overline{a_p} \otimes a_{p+1}) ={} & d(a_0) \otimes \overline{a_1} \otimes \dotsb \otimes \overline{a_p} \otimes a_{p+1}  \\
&  - \sum_{i=1}^p (-1)^{|a_0| + \epsilon_{i-1}} a_0 \otimes \dotsb \otimes \overline{d(a_i)} \otimes \dotsb \otimes a_{p+1}  \\
& + (-1)^{|a_0| + \epsilon_p} a_0 \otimes \overline{a_1} \otimes \dotsb \otimes \overline{a_p} \otimes d(a_{p+1}),
\end{align*}
where $\epsilon_i = |a_1|+\dotsb+|a_i|-i.$  
For each positive integer $p$ denote by $$b_{-p}\colon \Barr_{-p}(A)\rightarrow \Barr_{-p+1}(A)$$ the map of degree one
\begin{equation*}
\begin{split}
b_{-p}(a_0 \otimes \overline{a_1} \otimes \dotsb \otimes \overline{a_p} \otimes a_{p+1}) ={} & (-1)^{
|a_0|}a_0a_1\otimes \overline{a_2}\otimes \cdots \otimes \overline{a_p}\otimes a_{p+1}\\
& + \sum_{i=1}^{p-1} (-1)^{|a_0| + \epsilon_i}a_0 \otimes \overline{a_1}\otimes \cdots \otimes \overline{a_ia_{i+1}}\otimes  \cdots \otimes \overline{a_p} \otimes a_{p+1}  \\
& - (-1)^{|a_0| +\epsilon_{p-1}}a_0\otimes \overline{a_1}\otimes \cdots \otimes \overline{a_{p-1}}\otimes a_pa_{p+1},
\end{split}
\end{equation*} 
where $\epsilon_i = |a_1|+\dotsb+|a_i|-i.$  

Set  $\Barr_*(A):= \bigoplus_{p=0}^{\infty} \Barr_{-p}(A)$. Note that $\Barr_*(A)$, equipped with the differential $b_* + d_*$ defined as above, is the normalized bar resolution of $A$ (cf. e.g. \cite[Section 1]{Abb1}). In particular,  
$$
b_{-p+1}\circ b_{-p} =0, \quad d_{-p}\circ d_{-p} =0, \quad b_{-p} \circ d_{-p} + d_{-p+1} \circ b_{-p} =0, \quad \quad \text{for any $p \geq 0$}.
$$ 
It follows from the above identity  $b_{-p} \circ d_{-p} + d_{-p+1}\circ b_{-p} =0$ that $b_{-p}$ is a morphism of dg $A$-$A$-bimodules of degree one.   
Define $\Omega_{\nc}^{p - 1}(A):= \text{Coker}(b_{-p})$ for $p \geq 1$. Since each $b_{-p}$ is a map of dg $A$-$A$-bimodules, $\Omega_{\nc}^{p-1}(A)$ inherits a dg $A$-$A$-bimodule structure.    
Let $\pi \colon  A \twoheadrightarrow s\overline A$ be the natural projection of degree $-1$. Then we have the following description of $\Omega_{\nc}^p(A)$. 
\begin{lemma}\label{lemma-bimodule}
For each $p \in \mathbb{Z}_{\geq 0}$, there is a natural isomorphism of dg $A$-$A$-bimodules $$\alpha \colon \Omega_{\nc}^p(A)\xrightarrow{\cong} (\sA)^{\otimes p}\otimes A,$$ where the left $A$-module structure in $(\sA)^{\otimes p}\otimes A$ is given by $$a\blacktriangleright( \overline{a_1}\otimes\cdots \otimes \overline{a_p}\otimes a_{p+1}):=(\pi\otimes \id^{\otimes p})(b_{-p}(a\otimes \overline{a_1}\otimes\cdots \otimes \overline{a_p}\otimes a_{p+1})),$$ the right $A$-module structure is given by multiplication on the right $A$ factor of $(\sA)^{\otimes p}\otimes A$, and the differential on $(\sA)^{\otimes p}\otimes A$ is the tensor differential, i.e.\ given by

\begin{align*}
d(\overline{a_1}\otimes\cdots \otimes \overline{a_p}\otimes a_{p+1})=(-1)^{\epsilon_p}\overline{a_1}\otimes \dotsb\otimes \overline{a_p} \otimes d(a_{p+1}) -\sum_{i=1}^{p} (-1)^{\epsilon_{i-1}}\overline{a_1}\otimes \cdots \otimes d(\overline{a_i})\otimes \cdots \otimes a_{p+1},\end{align*} where $\epsilon_{i}=|a_1|+\dotsb+|a_{i-1}|+i-1$. 
\end{lemma}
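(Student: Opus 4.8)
The plan is to build the isomorphism $\alpha$ explicitly, check it is a well-defined map of graded vector spaces with an inverse, and then verify compatibility with the differential and with both the left and right $A$-module structures. First I would recall that $\Omega_{\nc}^p(A) = \coker(b_{-p-1}\colon \Barr_{-p-1}(A) \to \Barr_{-p}(A))$, where $\Barr_{-p}(A) = A \otimes (s\overline{A})^{\otimes p}\otimes A$. A typical generator of $\Barr_{-p}(A)$ is $a_0 \otimes \overline{a_1} \otimes \dotsb \otimes \overline{a_p} \otimes a_{p+1}$, and the image of $b_{-p-1}$ is spanned by the relations that let us absorb the leftmost $A$-factor $a_0$ into the first bar entry (modulo the subtlety that $\overline{a_0 a_1}$ means the projection of $a_0 a_1$ to $s\overline{A}$, plus correction terms when $a_0 a_1$ has a scalar component). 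So the candidate map is
\begin{align*}
\alpha \colon \Omega_{\nc}^p(A) \to (s\overline{A})^{\otimes p}\otimes A, \quad \overline{a_0 \otimes \overline{a_1}\otimes \dotsb \otimes \overline{a_p}\otimes a_{p+1}} \longmapsto (\pi \otimes \id^{\otimes p})\big(b_{-p-1}(\cdots)\big)\text{-type expression},
\end{align*}
or more simply the map induced by $a_0 \otimes \overline{a_1}\otimes\dotsb\otimes a_{p+1} \mapsto \pi(a_0)\otimes \overline{a_1}\otimes\dotsb\otimes a_{p+1}$ when $p \geq 1$; I would need to pin down the exact normalization so that it is forced to be well-defined on the cokernel. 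The inverse $\beta$ sends $\overline{a_1}\otimes\dotsb\otimes \overline{a_p}\otimes a_{p+1}$ to the class of $1 \otimes \overline{a_1}\otimes\dotsb\otimes \overline{a_p}\otimes a_{p+1}$ in $\Omega_{\nc}^p(A)$; that $\alpha$ and $\beta$ are mutually inverse should follow because in the cokernel every generator $a_0 \otimes \overline{a_1}\otimes\dotsb$ is identified with $1 \otimes \overline{a_0 a_1}\otimes \dotsb$ plus lower terms, i.e.\ the first $A$-factor can always be normalized to $1$.

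Next I would verify the module and differential compatibilities. The right $A$-action is transported verbatim since $b_{-p-1}$ only manipulates the left and interior factors, so $\alpha$ is manifestly right $A$-linear. For the left action, the point is \emph{definitional}: the formula $a \blacktriangleright (\overline{a_1}\otimes\dotsb\otimes a_{p+1}) := (\pi \otimes \id^{\otimes p})\big(b_{-p}(a \otimes \overline{a_1}\otimes\dotsb\otimes a_{p+1})\big)$ is precisely the image in $\coker$ of $a \cdot (1 \otimes \overline{a_1}\otimes\dotsb) = a \otimes \overline{a_1}\otimes\dotsb$ under the identification, so left $A$-linearity of $\alpha$ holds by construction. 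The differential compatibility is the one genuinely computational step: I would check that the internal differential $d_{-p}$ on $\Barr_{-p}(A)$, after passing to the cokernel and transporting along $\alpha$, becomes exactly the stated tensor differential on $(s\overline{A})^{\otimes p}\otimes A$; this uses the relation $b_{-p}\circ d_{-p} + d_{-p+1}\circ b_{-p} = 0$ to see that $d_{-p}$ descends to the cokernel, and then a bookkeeping of Koszul signs comparing the $\epsilon_i$ conventions. Naturality in $A$ is clear since every map in sight (the $b$'s, $d$'s, and $\pi$) is natural.

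The main obstacle I anticipate is \textbf{not} the abstract structure — all the module statements are essentially tautological once the cokernel description is set up — but rather getting the \emph{normalization and signs} exactly right, particularly the behavior of $\pi$ on products $a_0 a_1$ that are not in $\overline{A}$ (i.e.\ when $a_0 a_1$ has a nonzero scalar part, so $\overline{a_0 a_1}$ must be interpreted as $\pi(a_0 a_1)$ with the scalar part dropped, producing a correction term $\pm \langle\text{scalar}\rangle\,\overline{a_2}\otimes\dotsb$). One must confirm that these correction terms are consistently absorbed so that $\alpha$ is genuinely well-defined modulo $\mathrm{im}(b_{-p-1})$ and that the resulting differential is the clean tensor differential claimed. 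I would handle this by working with the normalized (reduced) bar complex throughout, where $\overline{A} = A/\mathbb{K}.1$, so that the projection $\pi$ is literally the quotient and these scalar subtleties are built into the notation rather than appearing as extra terms.
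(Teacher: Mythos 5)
There is a genuine gap at the foundational step: the explicit map you propose to induce $\alpha$, namely $a_0\otimes\overline{a_1}\otimes\cdots\otimes\overline{a_p}\otimes a_{p+1}\mapsto \pi(a_0)\otimes\overline{a_1}\otimes\cdots\otimes\overline{a_p}\otimes a_{p+1}$, does not descend to $\Omega_{\nc}^p(A)=\coker(b_{-p-1})$. Your description of the relations is the source of the error: $\mathrm{Im}(b_{-p-1})$ is not spanned by relations that merely ``absorb the leftmost $A$-factor into the first bar entry''; the map $b_{-p-1}$ is the full alternating sum, which also multiplies interior adjacent entries and multiplies the last bar entry into the rightmost $A$-factor. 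Concretely, $b_{-p-1}(1\otimes\overline{a_1}\otimes\cdots\otimes\overline{a_{p+1}}\otimes a_{p+2})$ lies in $\mathrm{Im}(b_{-p-1})$, but applying $\pi\otimes\id^{\otimes p+1}$ to it gives $\overline{a_1}\otimes\overline{a_2}\otimes\cdots\otimes\overline{a_{p+1}}\otimes a_{p+2}\neq 0$, since every other term has leading entry $1$ and is killed by $\pi$. So your ``simpler'' candidate is ill-defined, and the statements you build on it cannot be run as stated (also, right-linearity is not ``manifest because $b$ only manipulates the left and interior factors'' --- $b$ does act on the right factor). The scalar-part bookkeeping you flag at the end is not the real difficulty and cannot repair this.

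The correct construction, which your first (vaguer) formula points to but with the wrong index, exploits $b_{-p}\circ b_{-p-1}=0$: the map $b_{-p}$ itself factors through the cokernel as $\widetilde b_{-p}\colon\Omega_{\nc}^p(A)\to\Barr_{-p+1}(A)=A\otimes(\sA)^{\otimes p-1}\otimes A$, and one sets $\alpha:=(\pi\otimes\id^{\otimes p-1}\otimes\id_A)\circ\widetilde b_{-p}$; well-definedness is then automatic and no normalization discussion is needed. Your $\beta$ (prepend $1$, project to the cokernel) is correct and is a two-sided inverse: $\alpha\circ\beta=\id$ because in $b_{-p}(1\otimes\overline{a_1}\otimes\cdots\otimes a_{p+1})$ all terms except $a_1\otimes\overline{a_2}\otimes\cdots\otimes a_{p+1}$ have leading entry $1$ and die under $\pi$, while $\beta\circ\alpha=\id$ follows from the contracting homotopy $\chi$ of the bar complex (cf.\ Remark \ref{differentialform}), since $\chi_{-p}(b_{-p}(x))=1\otimes(\pi\otimes\id^{\otimes p})(b_{-p}(x))$ and $x-\chi_{-p}(b_{-p}(x))\in\mathrm{Im}(b_{-p-1})$, i.e.\ every class is normalized by $1\otimes\alpha(x)$, not by $1\otimes\overline{a_0a_1}\otimes\cdots$ ``plus lower terms''. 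With this $\alpha$ your remaining structural remarks do go through: left $A$-linearity is tautological because $\blacktriangleright$ is by definition the transported action of $a$ on the class of $1\otimes\omega$, and right-linearity and compatibility of the differentials follow from $b_{-p}$ being a morphism of dg bimodules of degree one together with the degree $-1$ projection $\pi$, after the routine sign check identifying the induced differential with the stated tensor differential.
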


\begin{proof}
Since $b_{-p} \circ b_{-p-1} =0$, the map $b_{-p}$ factors through  $\Omega_{\nc}^p(A)$. Namely, 
$$
\xymatrix@C=5pc{
& \Omega_{\nc}^p(A) \ar[rd]^-{\widetilde{b}_{-p}} & \\
\Barr_{-p-1} \ar[r]_-{b_{-p-1}}  & \Barr_{-p}(A) \ar[r]_-{b_{-p}} \ar@{->>}[u]^-{\widehat{b}_{-p}}& \Barr_{-p+1}(A),
}
$$   
where  $\widehat{b}_{-p}$ and $\widetilde{b}_{-p}$ are the natural  maps induced by the cokernel.

Define $\alpha \colon \Omega_{\nc}^p(A) \to (\sA)^{\otimes p}\otimes A$ as the composition $$\Omega_{\nc}^p(A)\xrightarrow{\widetilde{b}_{-p}} A\otimes (\sA)^{\otimes p-1}\otimes A \xrightarrow{\pi\otimes \id_{s\overline A}^{\otimes p-1} \otimes \id_A } \sA\otimes (\sA)^{\otimes p-1}\otimes A = (\sA)^{\otimes p} \otimes A. $$ It is clear that $\alpha$ is a morphism of dg $A$-$A$-bimodules. The inverse of $\alpha$ is given by the composition $\beta\colon (\sA)^{\otimes p}\otimes A \rightarrow A\otimes (\sA)^{\otimes p}\otimes A \xrightarrow{\widehat b_{-p}} \Omega_{\nc}^p(A),$ where the first map is $$\overline{a_1}\otimes \cdots \otimes \overline{a_p}\otimes a_{p+1} \mapsto 1\otimes \overline{a_1}\otimes \cdots \otimes \overline{a_p}\otimes a_{p+1}.$$ Note that we have $\alpha\circ \beta =\id$ and $\beta\circ \alpha =\id$. 
\end{proof}

\begin{remark}
\label{differentialform}
We have an exact sequence of dg $A$-$A$-bimodules (of degree one)
$$
\dotsb \xrightarrow{b_{-p-1}}\Barr_{-p}(A) \xrightarrow{b_{-p}} \Barr_{-p+1}(A)\xrightarrow{b_{-p+1}} \dotsb \xrightarrow{b_{-1}} \Barr_{0}(A) \xrightarrow{\mu} A \to 0
$$ 
since there is  a homotopy 
$
\chi_{-p} \colon \Barr_{-p+1}(A) \to \Barr_{-p}(A) 
$
given by 
$$
\chi_{-p} (a_0 \otimes \overline{a_1} \otimes \dotsb \otimes \overline{a_{p-1}} \otimes a_p) =  1 \otimes \overline{a_0} \otimes \overline{a_1} \otimes \dotsb \overline{a_{p-1}}\otimes a_p
$$
such that $\chi_{-p} \circ b_{-p} + b_{-p-1}\circ \chi_{-p-1} = \id_{\Barr_{-p}(A)}$, where for convenience we set $\Barr_{1}(A) =A$ and $b_{0} = \mu$. 
 This induces a short exact sequence of degree zero
\begin{align}
\label{shortexactsequence}
0\rightarrow s^{-1}\Omega_{\nc}^{p+1}(A)\hookrightarrow \Barr_{-p}(A) \rightarrow \Omega_{\nc}^{p}(A)\rightarrow 0
\end{align}
for any $p \geq 0$.
In particular, we have a short exact sequence of dg $A$-$A$-bimodules
$$ 
 0\rightarrow s^{-1}\Omega_{\nc}^{1}(A)\rightarrow A \otimes A \xrightarrow{\mu} A \to 0.
$$ 

For all $p, q\geq 0$ there is a natural isomorphism of dg $A$-$A$-bimodules 
$$
\kappa_{p, q}\colon \Omega^p_{\nc}(A) \otimes_A \Omega^q_{\nc}(A) \xrightarrow{\cong} \Omega^{p+q}_{\nc}(A)
$$ 
which sends $(\overline{a_1}\otimes \cdots \otimes \overline{a_p}\otimes a_{p+1}) \otimes_A  (\overline{b_1}\otimes \cdots \otimes \overline{b_p}\otimes b_{q+1})$ to 
$$
\overline{a_1}\otimes \cdots \otimes \overline{a_p}\otimes (a_{p+1}   \blacktriangleright(\overline{b_1}\otimes \cdots \otimes \overline{b_p}\otimes b_{q+1})).
$$
In particular, we get  
$ \Omega^p_{\nc}(A) \cong \underbrace{\Omega^1_{\nc}(A)\otimes_A  \dotsb \otimes_A \Omega^1_{\nc}(A)}_{p}$ for $p>0$.
For this reason, we call $\Omega_{\nc}^{p}(A)$ the {\it noncommutative differential $p$-forms of $A$}.
\end{remark}

\bigskip 

We identify $\Omega_{\nc}^p(A)$ with $(\sA)^{\otimes p}\otimes A$ via the isomorphism $\alpha$ in Lemma \ref{lemma-bimodule} from now on. Consider the Hochschild cochain complex $C^*(A, \Omega_{\nc}^p(A))$ with coefficients in the dg $A$-$A$-bimodule $\Omega_{\nc}^p(A)$. Namely, we have   
\begin{align*}
C^{m,n}(A,\Omega_{\nc}^p(A)) & = \text{Hom}_{\mathbb{K}}( (s\overline{A})^{\otimes m}, (\sA)^{\otimes p}\otimes A)^n\\ 
C^n(A, \Omega_{\nc}^p(A)) & = \prod_{m \in \mathbb{Z}_{\geq 0}} C^{m,n}(A,\Omega_{\nc}^p(A)).
\end{align*}

Define a morphism of (total) degree zero $${\theta}_{m, p}\colon C^{m, *}(A, \Omega_{\nc}^p(A))\rightarrow C^{m+1, *}(A, \Omega_{\nc}^{p+1}(A))$$ which sends $f \in \Hom_{\mathbb K}((\sA)^{\otimes m}, (\sA)^{\otimes p}\otimes A) $ to  $\theta_{m, p}(f) \in\Hom_{\mathbb K}((\sA)^{\otimes m+1}, (\sA)^{\otimes p+1}\otimes A) $ given by the following formula
\begin{equation}\label{equation-definition-theta}
 {\theta}_{m, p}(f)(\overline{a_1}\otimes \cdots\otimes \overline{a_{m+1}})=(-1)^{(|a_1|-1) |f| } \overline{a_1} \otimes f(\overline{a_2}\otimes \cdots \otimes \overline{a_{m+1}}).
\end{equation} 
Write $\theta_p = \prod_{m \in \mathbb Z_{\geq 0}} \theta_{m, p}$.  We have that $\theta_p$ is compatible with the differentials. Namely, the following diagram commutes
$$
\xymatrix{
C^{*}(A, \Omega_{\nc}^p(A)) \ar[r]^-{{\theta}_{p}} \ar[d]^{\delta}& C^{*}(A, \Omega_{\nc}^{p+1}(A))\ar[d]^-{\delta}\\
C^{*+1}(A, \Omega_{\nc}^p(A))\ar[r]^-{{\theta}_p} & C^{*+1}(A, \Omega_{\nc}^{p+1}(A)).
}
$$
The commutativity of the above diagram can also be verified immediately from formula \eqref{deRham} below.
Hence, the maps $\theta_p$ for $p\geq 0$  form an inductive system of cochain complexes.
\begin{definition}
Define
$$
\calC_{\sg}^{*}(A, A):= \varinjlim_{\theta_p} C^{*}(A, \Omega_{\nc}^{p}(A)), 
$$ as the colimit of the inductive system of cochain complexes
$$
C^*(A, A) \xrightarrow{\theta_0} C^*(A, \Omega_{\nc}^1(A))\xrightarrow{\theta_1}  \cdots \xrightarrow{\theta_{p-1}} C^*(A, \Omega_{\nc}^p(A)) \xrightarrow{\theta_p} C^{*}(A, \Omega_{\nc}^{p+1}(A)) \xrightarrow{\theta_{p+1}} \cdots.
$$
Since the $\theta_p$ are compatible with the Hochschild differentials, there is an induced differential $$\delta_{\sg} \colon \calC^{*}_{\sg}(A, A)\rightarrow \calC_{\sg}^{*+1}(A, A).$$ We call  $(\calC_{\sg}^*(A, A), \delta_{\sg})$ the {\it singular Hochschild cochain complex} of $A$. We denote its cohomology by $\HH_{\sg}^*(A, A)$ and call it {\it singular Hochschild cohomology} of $A$ (also called Tate-Hochschild cohomology of $A$).
\end{definition}

\subsection{Cup product on $\calC_{\sg}^*(A, A)$} 
\label{subsection:cupproduct}

Recall that  $C^{m, *}(A,\Omega_{\nc}^p(A))= \text{Hom}_{\mathbb{K}}( (s\overline{A})^{\otimes m}, (\sA)^{\otimes p} \otimes A).$

\begin{definition}
\label{defn-cup} Let $f\in C^{m, *}(A, \Omega_{\nc}^{p}(A))$ and $g\in C^{n, *}(A,\Omega_{\nc}^q(A))$. Define the cup product $f\cup g \in C^{m+n, *}(A, \Omega_{\nc}^{p+q}(A))$ by
$$
f\cup g:=(\id^{\otimes p+q}\otimes \mu)\circ (\id^{\otimes q}\otimes f\otimes \id) \circ  (\id^{\otimes m}\otimes g),
$$
where we have identified $\Omega_{\nc}^p(A)$ with $(s\overline{A})^{\otimes p}\otimes A$ as in Lemma \ref{lemma-bimodule} and denoted by $\mu\colon A \otimes A \to A$ the multiplication of $A$. More precisely,  $f\cup g$ is given by the following composition
\[
(s\overline A)^{\otimes m+n}\xrightarrow{\id^{\otimes m} \otimes g} (s\overline A)^{\otimes m+q}\otimes A \xrightarrow{\id^{\otimes q} \otimes f \otimes \id_A} (s\overline A)^{\otimes p+q} \otimes A \otimes A \xrightarrow{ \id^{\otimes p+q} \otimes \mu} (s\overline A)^{\otimes p+q} \otimes A.
\]
\end{definition}

When applying the above composition to an element the Koszul sign rule is used. For instance, write $g(\overline{a_{m+1}} \otimes \dotsb \otimes \overline{a_{m+n}}) = \sum_i \overline{b_{1,i}} \otimes \overline{b_{2, i}} \otimes \dotsb \otimes \overline{b_{q,i}} \otimes b_{q+1, i}$. If $m \geq q$ we have 
\[
f\cup g(\overline{a_1} \otimes \dotsb \otimes \overline{a_{m+n}})= \sum_i (-1)^\epsilon \overline{a_1} \otimes \dotsb \otimes \overline{a_q} \otimes f (\overline{a_{q+1}} \otimes \dotsb \otimes \overline{a_{m}} \otimes \overline{b_{1,i}} \otimes  \dotsb \otimes \overline{b_{q,i}}) b_{q+1, i}
\]
where $\epsilon = (|a_1|+\dotsb+|a_m|-m)|g| + (|a_1|+\dotsb+|a_q|-q)|f|$.

When $p=q=0$ the cup product coincides with the classical cup product on the Hochschild cochain complex $C^*(A, A)$. When $m=n=0$,  write $\alpha = \overline{a_1} \otimes \overline{a_2} \otimes \dotsb \otimes \overline{a_p} \otimes a_{p+1} \in \Omega^p_{\nc}(A) \cong \Hom_{\mathbb K} (\mathbb K, \Omega^p_{\nc}(A))$ and $\beta = \overline{b_1} \otimes \overline{b_2} \otimes \dotsb \otimes \overline{b_q} \otimes b_{q+1} \in \Omega^q_{\nc}(A) \cong \Hom_{\mathbb K} (\mathbb K, \Omega^q_{\nc}(A))$. Then $\alpha \cup \beta \in \Omega^{p+q}_{\nc}(A) \cong\Hom_{\mathbb K} (\mathbb K, \Omega^{p+q}_{\nc}(A))$ is given by 
\begin{align}
\label{cup}
\alpha \cup \beta = (-1)^{|\alpha|(|b_1|+\dotsb+|b_q|-q)} \overline{b_1} \otimes \dotsb \otimes \overline{b_q} \otimes \overline{a_1} \otimes \dotsb \otimes \overline{a_p} \otimes a_{p+1} b_{q+1}. 
\end{align}
Here the isomorphism $\Omega^p_{\nc}(A) \cong \Hom_{\mathbb K} (\mathbb K, \Omega^p_{\nc}(A))$ is given by $\alpha \mapsto (1 \mapsto \alpha)$. The above identity will be used in the proof of Proposition \ref{prop:iotastar}. 

Consider the {\it de Rham cocycle} $d_{\rm dR}\colon \overline{a} \mapsto \overline{a} \otimes 1$ in $C^{1, *}(A, \Omega_{\nc}^1(A))$. Note that  $\delta(d_{\rm dR})=0$. So $d_{\rm dR}$ is indeed a cocycle. 
Each structure map $\theta_p\colon  C^*(A, \Omega_{\nc}^p(A)) \to C^*(A, \Omega_{\nc}^{p+1}(A))$ is given by the cup product with $d_{\rm dR}$, namely
\begin{align} \label{deRham}
{\theta}_p(f)=f \cup d_{\rm dR}, \quad \quad \text{for any $f \in C^*(A, \Omega_{\nc}^p(A)).$}
\end{align}
 Since $d_{\rm dR} \cup f= f \cup d_{\rm dR}$, it follows that $\cup$ is compatible with the structure maps $\theta_p$, so it induces a well-defined product $$\cup\colon \calC_{\sg}^*(A, A) \otimes \calC_{\sg}^*(A, A) \to \calC_{\sg}^*(A, A).$$

The following is straightforward to verify. We refer to \cite[Section 4.1]{Wan} for details. 

\begin{proposition}\label{proposition:dgsingular} The cup product defines a unital dg algebra structure $(\calC_{\sg}^*(A, A), \delta_{\sg}, \cup)$. This induces a graded commutative algebra structure $(\HH_{\sg}^*(A, A), \cup)$. 
\end{proposition}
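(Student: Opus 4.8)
The plan is to verify all the algebra axioms termwise on the inductive system $C^{*}(A,A)\xrightarrow{\theta_0}C^{*}(A,\Omega_{\nc}^1(A))\xrightarrow{\theta_1}\cdots$ and then pass to the colimit. This reduction is legitimate because, by \eqref{deRham}, $\theta_p(f)=f\cup d_{\rm dR}=d_{\rm dR}\cup f$, so the cup product of Definition~\ref{defn-cup} is already compatible with the structure maps $\theta_p$; hence any bilinear or unary identity holding on every $C^{*}(A,\Omega_{\nc}^p(A))\otimes C^{*}(A,\Omega_{\nc}^q(A))$ descends to $\calC_{\sg}^{*}(A,A)$.

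\textbf{Associativity and unit.} First I would unwind Definition~\ref{defn-cup}: for $f\in C^{m,*}(A,\Omega_{\nc}^p(A))$, $g\in C^{n,*}(A,\Omega_{\nc}^q(A))$, $h\in C^{\ell,*}(A,\Omega_{\nc}^r(A))$, both $(f\cup g)\cup h$ and $f\cup(g\cup h)$ are the composite that applies $h$ to the last $\ell$ slots, then $g$ to the next block, then $f$, and finally multiplies the three trailing $A$-factors using $\mu$; associativity of $\mu$ and the Koszul sign rule give the identity directly. The two-sided unit is the image in $\calC_{\sg}^{*}(A,A)$ of $1_A\in A=C^{0,0}(A,\Omega_{\nc}^0(A))$, regarded as the arity-$0$ cochain $\mathbb K\to A$, $1\mapsto 1_A$; one checks $\delta_{\sg}(1_A)=0$ and $1_A\cup g=g=g\cup 1_A$ straight from the formula, there being no signs because $|1_A|=0$.

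\textbf{The differential is a derivation.} Since $\delta_{\sg}$ is induced by the Hochschild differentials $\delta=\delta^v+\delta^h$ on the $C^{*}(A,\Omega_{\nc}^p(A))$, it is enough to verify the graded Leibniz rule $\delta(f\cup g)=\delta(f)\cup g+(-1)^{|f|}f\cup\delta(g)$ at each level. The conceptual way to see this is to note, using the isomorphism $\kappa_{p,q}\colon\Omega_{\nc}^p(A)\otimes_A\Omega_{\nc}^q(A)\xrightarrow{\cong}\Omega_{\nc}^{p+q}(A)$ of Remark~\ref{differentialform}, that $\cup$ here is exactly $\kappa_{p,q}$ post-composed with the standard Hochschild cup product $C^{*}(A,M)\otimes C^{*}(A,N)\to C^{*}(A,M\otimes_A N)$ for the dg bimodules $M=\Omega_{\nc}^p(A)$ and $N=\Omega_{\nc}^q(A)$; that product is well known to be a chain map (the $\delta^v$-part because $\mu$ and the bimodule action maps are chain maps, the $\delta^h$-part by the usual telescoping identity, whose signs are insensitive to the bimodule coefficients). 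Alternatively one checks the Leibniz rule by brute force on the composition in Definition~\ref{defn-cup}, which is mechanical once $\delta^h$ is written out. Together with the previous step this gives the dg algebra $(\calC_{\sg}^{*}(A,A),\delta_{\sg},\cup)$.

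\textbf{Graded commutativity on cohomology.} This is the only genuinely nontrivial point, since $\cup$ is not graded commutative on cochains. I would follow Gerstenhaber's argument: construct a degree $-1$ operation $\cup_1$ on $\calC_{\sg}^{*}(A,A)$ extending the classical brace $\cup_1$ on $C^{*}(A,A)$ and compatible with the $\theta_p$, and show that the graded commutator is exact, namely $$f\cup g-(-1)^{|f||g|}\,g\cup f=\pm\,\delta_{\sg}(f\cup_1 g)\pm(\delta_{\sg}f)\cup_1 g\pm f\cup_1(\delta_{\sg}g).$$ The expected main obstacle is precisely the definition of $\cup_1$ on the colimit together with the attendant sign bookkeeping; this computation is carried out in \cite[Section~4.1]{Wan}, which I would either reproduce or simply invoke. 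Granting it, graded commutativity of $(\HH_{\sg}^{*}(A,A),\cup)$ follows and the proposition is proved.
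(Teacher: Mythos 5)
Your overall plan---verify everything termwise on the inductive system, pass to the colimit via \eqref{deRham}, and reduce graded commutativity on cohomology to a Gerstenhaber-type homotopy carried out in \cite[Section 4.1]{Wan}---is essentially the route the paper takes: its proof of this proposition consists precisely of the assertion that the verification is straightforward together with a reference to \cite[Section 4.1]{Wan}, where the operation playing the role of your $\cup_1$ is the $\bullet$-product appearing in Definition \ref{definitionbracket}.

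There is, however, one genuine error in your argument for the Leibniz rule: the product $\cup$ of Definition \ref{defn-cup} is \emph{not} equal to the classical Hochschild cup product $C^{*}(A,\Omega^p_{\nc}(A))\otimes C^{*}(A,\Omega^q_{\nc}(A))\to C^{*}(A,\Omega^p_{\nc}(A)\otimes_A\Omega^q_{\nc}(A))$ post-composed with $\kappa_{p,q}$. That composite is the product $\cup'$ of \eqref{cup'}, and the proof of Proposition \ref{proposition3.5} shows that $\cup$ and $\cup'$ differ at the cochain level: they agree only up to the explicit homotopy $g\bullet_{<0}f$ of \eqref{alignhomotopy}, and $\cup'$ is not even compatible with the structure maps $\theta_p$ before passing to cohomology---which is exactly why the paper keeps the two products distinct. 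The difference is visible in Definition \ref{defn-cup}: in $f\cup g$ the cochain $f$ is applied to a block containing part of the \emph{output} of $g$ (its first $q$ bar factors), not merely to original input slots, so your description of associativity as ``apply $h$, then $g$, then $f$ to successive blocks and multiply the trailing $A$-factors'' describes $\cup'$ rather than $\cup$. As written, your conceptual argument therefore establishes the derivation property (and associativity) for the wrong product. Your fallback---the direct computation with $\delta=\delta^v+\delta^h$ against the composition in Definition \ref{defn-cup}---is the correct way to proceed and does go through (associativity likewise, using that $\blacktriangleright$ from Lemma \ref{lemma-bimodule} is a genuine left action compatible with $\mu$), and your treatment of the unit (the class of $1_A\in C^{0,*}(A,A)$, equivalently of $d_{\rm dR}$ after applying $\theta_0$) and of the reduction to the colimit via \eqref{deRham} are fine. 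With the erroneous identification removed and the direct checks carried out, the proposal coincides with the paper's intended argument.
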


\bigskip 
In the rest of this subsection, we will show that the cup product $\cup$ on $\HH_{\sg}^*(A, A)$ may be obtained from the classical Hochschild cup product construction.  
Consider the short exact sequence of dg $A$-$A$-bimodules (cf. (\ref{shortexactsequence}))
$$
0\rightarrow s^{-1}\Omega_{\nc}^{p+1}(A)\hookrightarrow \Barr_{-p}(A) \rightarrow \Omega_{\nc}^{p}(A)\rightarrow 0.
$$
Apply the derived functor $\HH^*(A, - )$ to the above to obtain a long exact sequence
\begin{align}
\label{longexactsequence}
\cdots\rightarrow   \HH^*(A, \Barr_{-p}(A)) \rightarrow  \HH^*(A, \Omega_{\nc}^p(A)) \xrightarrow{\vartheta_{*,p}}  \HH^{*+1}(A, s^{-1}\Omega_{\nc}^{p+1}(A))\rightarrow \cdots 
\end{align}
where $\vartheta_{*,p}$ denotes the connecting morphism. Note that there is a natural isomorphism $$\HH^{*+1}(A, s^{-1}\Omega_{\nc}^{p+1}(A))\cong \HH^{*}(A, \Omega_{\nc}^{p+1}(A)).$$ Consider the following inductive system of graded vector spaces
$$
\HH^*(A, A)\xrightarrow{\vartheta_{*, 0}} \dotsb \xrightarrow{\vartheta_{*, p-1}} \HH^{*}(A, \Omega_{\nc}^p(A)) \xrightarrow{\vartheta_{*, p}} \HH^{*}(A, \Omega_{\nc}^{p+1}(A)) \xrightarrow{\vartheta_{*, p+1}}  \cdots
$$
and denote its colimit  by $\displaystyle \varinjlim_{\vartheta_p}\HH^{*}(A, \Omega_{\nc}^{p}(A)).$

\bigskip 
For any $p, q \geq 0$ and $m, n \in \mathbb Z$, consider the product defined as the composition
$$\cup'\colon \HH^m(A, \Omega_{\nc}^p(A))\otimes \HH^n(A, \Omega_{\nc}^q(A))\rightarrow \HH^{m+n} (A, \Omega_{\nc}^p(A) \otimes_A \Omega_{\nc}^q(A)) \xrightarrow{\cong} \HH^{m+n}(A, \Omega_{\nc}^{p+q}(A)),$$ where the first map is given by the classical Hochschild cup product construction and the second is induced by the isomorphism $\kappa_{p, q}\colon \Omega_{\nc}^p(A) \otimes_A \Omega_{\nc}^q(A) \xrightarrow{\cong} \Omega_{\nc}^{p+q}(A)$ in Remark \ref{differentialform}. Note that $\cup'$ can be lifted to the cochain complex level
$$
\cup'\colon C^{m, *}(A, \Omega^{p}_{\nc}(A)) \otimes C^{n, *}(A, \Omega_{\nc}^q(A))\rightarrow C^{m+n, *}(A, \Omega_{\nc}^{p+q}(A))$$
by 
\begin{align}
\label{cup'}
f \cup' g (\overline{a_1} \otimes \cdots\otimes \overline{a_{m+n}}) = (-1)^{\epsilon} \kappa_{p, q}\big(f(\overline{a_1} \otimes \dotsb \otimes \overline{a_m}) \otimes_A g(\overline{a_{m+1}}\otimes \dotsb \otimes \overline{a_{m+n}})\big),
\end{align}
where $\epsilon = (|a_1|+\dotsb+|a_m|-m)|g|.$ In general, $\cup'$ is not compatible with the structure maps $\theta_p$ at the cochain complex level. However, by the functoriality of the classical Hochschild cup product, $\cup'$ is compatible with the connecting morphism $\vartheta_{m, p}$ at the cohomology level. This induces a well-defined product (still denoted by $\cup'$) on the colimit $ \displaystyle \varinjlim_{\vartheta_p} \HH^{m}(A, \Omega_{\nc}^{p}(A)).$

\begin{proposition}
\label{proposition3.5}
There is a natural isomorphism of graded algebras
$$(\HH_{\sg}^*(A, A) , \cup) \cong ( \varinjlim_{\vartheta_p}\HH^{*}(A, \Omega_{\nc}^{p}(A)), \cup').$$
\end{proposition}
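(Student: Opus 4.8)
The plan is to exhibit a morphism of inductive systems of cochain complexes that, after passing to the colimit, identifies the complex $\calC_{\sg}^*(A,A)$ computing the left-hand side with a complex whose cohomology is the colimit on the right-hand side, and then to check that the two cup products match. First I would recall that, by definition, $\calC_{\sg}^*(A,A)$ is the colimit of the system $\bigl(C^*(A,\Omega_{\nc}^p(A)), \theta_p\bigr)$, where each $\theta_p = (-)\cup d_{\rm dR}$. The cohomology of a filtered colimit of cochain complexes is the colimit of the cohomologies, so $\HH_{\sg}^*(A,A) \cong \varinjlim_{\mathrm H^*(\theta_p)} \HH^*(A,\Omega_{\nc}^p(A))$ as graded vector spaces, where $\mathrm H^*(\theta_p)$ is the map induced on Hochschild cohomology by $\theta_p$. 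So the content of the proposition is twofold: (a) identify the transition map $\mathrm H^*(\theta_p)$ with the connecting morphism $\vartheta_{*,p}$ of the long exact sequence \eqref{longexactsequence} (under the isomorphism $\HH^{*+1}(A, s^{-1}\Omega_{\nc}^{p+1}(A))\cong \HH^*(A,\Omega_{\nc}^{p+1}(A))$), and (b) check that $\cup$ corresponds to $\cup'$ on the colimit.

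For step (a), the key observation is that the structure map $\theta_p$ is, at the level of the coefficient bimodules, induced by the inclusion $s^{-1}\Omega_{\nc}^{p+1}(A) \hookrightarrow \Barr_{-p}(A)$ appearing in \eqref{shortexactsequence}. More precisely, I would use the description of $\Omega_{\nc}^p(A)$ from Lemma \ref{lemma-bimodule} together with the homotopy $\chi_\bullet$ of Remark \ref{differentialform}: the connecting homomorphism $\vartheta_{*,p}$ for the short exact sequence $0\to s^{-1}\Omega_{\nc}^{p+1}(A) \to \Barr_{-p}(A) \to \Omega_{\nc}^p(A)\to 0$ is computed by lifting a cocycle in $C^*(A,\Omega_{\nc}^p(A))$ along the surjection $\Barr_{-p}(A)\twoheadrightarrow \Omega_{\nc}^p(A)$, applying the Hochschild differential, and recognizing the result as landing in the subbimodule $s^{-1}\Omega_{\nc}^{p+1}(A)$. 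One computes the canonical $\mathbb{K}$-linear (not bimodule) splitting of $\Barr_{-p}(A)\twoheadrightarrow \Omega_{\nc}^p(A)$ given by $\overline{a_1}\otimes\cdots\otimes\overline{a_p}\otimes a_{p+1}\mapsto 1\otimes\overline{a_1}\otimes\cdots\otimes\overline{a_p}\otimes a_{p+1}$ (this is the map $\beta$ from Lemma \ref{lemma-bimodule} composed with $\widetilde b_{-p}$), and applying the Hochschild differential to this lift of $f$ and projecting to $s^{-1}\Omega_{\nc}^{p+1}(A)$ yields exactly the formula \eqref{equation-definition-theta} for $\theta_{m,p}(f)$ — the term $(-1)^{(|a_1|-1)|f|}\overline{a_1}\otimes f(\overline{a_2}\otimes\cdots\otimes\overline{a_{m+1}})$ is precisely the surviving boundary term, since the remaining terms of $\delta(f)$ are annihilated once one passes to $\Omega_{\nc}^p(A)$. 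This establishes $\mathrm H^*(\theta_p) = \vartheta_{*,p}$ and hence the isomorphism of graded vector spaces.

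For step (b), I would first verify at the cochain level that $f\cup g$ of Definition \ref{defn-cup} and $f\cup' g$ of \eqref{cup'} differ only by the choice of lift used to define $\Omega_{\nc}^{p+q}(A)$: both are built by inserting $g$ into the last tensor slots, then applying $f$, then multiplying; $\cup$ uses the identification $\Omega_{\nc}^{p+q}(A)\cong (s\overline A)^{\otimes p+q}\otimes A$ directly, whereas $\cup'$ factors through $\Omega_{\nc}^p(A)\otimes_A\Omega_{\nc}^q(A)\xrightarrow{\kappa_{p,q}}\Omega_{\nc}^{p+q}(A)$. A direct comparison of formulas, matching the sign $\epsilon$ in \eqref{cup'} with the sign in the displayed formula for $f\cup g$ after Definition \ref{defn-cup}, shows $\cup$ and $\cup'$ agree up to terms that vanish in the colimit, or more cleanly that they agree on the nose modulo the identification $\kappa_{p,q}$. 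Since both the isomorphism of step (a) and the products are natural in the coefficient bimodules and compatible with the transition maps (the former by construction, the latter by the functoriality of the classical Hochschild cup product noted before the statement), the induced map on colimits is an algebra isomorphism.

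\textbf{Main obstacle.} The delicate point is step (a): carefully tracking the signs and the bimodule-structure bookkeeping in identifying the connecting morphism $\vartheta_{*,p}$ with $\theta_p$. One must be scrupulous that the $\mathbb{K}$-linear splitting used to compute the connecting map is compatible with the left $A$-module structure $\blacktriangleright$ on $\Omega_{\nc}^p(A)$ defined via $b_{-p}$ in Lemma \ref{lemma-bimodule}, and that the degree $-1$ shift relating $s^{-1}\Omega_{\nc}^{p+1}(A)$ to $\Omega_{\nc}^{p+1}(A)$ introduces exactly the sign $(-1)^{(|a_1|-1)|f|}$ appearing in \eqref{equation-definition-theta} and no other. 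The comparison of cup products in step (b) is routine but sign-heavy; I would organize it by reducing to the cases $m=n=0$ (handled by \eqref{cup}) and $p=q=0$ (the classical cup product), using the multiplicativity of $\kappa_{p,q}$ to bootstrap to the general case.
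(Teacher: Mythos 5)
Your step (a) is fine and is essentially the paper's argument: the paper identifies $\mathrm H^*(\theta_p)$ with the connecting morphism $\vartheta_{*,p}$ by a lifting argument over the bar resolution, and your snake-lemma computation with the $\mathbb K$-linear splitting $\alpha \mapsto 1\otimes \alpha$ of $\Barr_{-p}(A)\twoheadrightarrow \Omega^p_{\nc}(A)$ is an equivalent way to compute the same connecting map (the term with $a_1$ in the bar factor is the only one surviving the projection $\pi\otimes\id$ identifying the kernel with $(s\overline A)^{\otimes p+1}\otimes A$, and it reproduces \eqref{equation-definition-theta} up to the sign bookkeeping you flag).

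The genuine gap is in step (b). The products $\cup$ and $\cup'$ do \emph{not} agree on the nose at the cochain level, nor do they differ by ``terms that vanish in the colimit'': in $f\cup g$ (Definition \ref{defn-cup}) the cochain $f$ is applied to a mixed block consisting of the last $m-q$ original inputs together with the $s\overline A$-factors of the \emph{output} of $g$, whereas in $f\cup' g$ (formula \eqref{cup'}) $f$ and $g$ are applied to disjoint blocks of the original inputs and their outputs are concatenated via $\kappa_{p,q}$. For $q>0$ these are different cochain maps, so the assertion that they ``agree on the nose modulo the identification $\kappa_{p,q}$'' is false, and the weaker claim about colimit-vanishing is not an argument. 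What is actually needed—and what the paper supplies—is an explicit cochain homotopy between the two products, built from the negative brace operations of Definition \ref{definitionbracket}: setting $g\bullet_{<0}f=\sum_{i=1}^q g\bullet_{-i}f$, one has
\begin{equation*}
f\cup g - f\cup' g \;=\; \delta\bigl(g\bullet_{<0}f\bigr) \;-\; \delta(g)\bullet_{<0}f \;-\;(-1)^{|g|-1}\, g\bullet_{<0}\delta(f),
\end{equation*}
which is identity \eqref{alignhomotopy}; only this (or a substitute argument) lets one conclude that the two products induce the same multiplication on the colimit of cohomologies. Your proposed fallback—reducing to the cases $m=n=0$ and $p=q=0$ and ``bootstrapping via the multiplicativity of $\kappa_{p,q}$''—does not work either: a class in the colimit represented at stage $p>0$ need not lie in the image of $\HH^*(A,A)$, and neither product is determined by those two special cases by any bilinear-extension principle. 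So the vector-space identification in your proposal is sound, but the algebra statement is not yet proved.
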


\begin{proof} First we claim that $
\mathrm H^*(\theta_p)=\vartheta_{p}$ for any $p\in \Z_{\geq 0}$. Indeed,  since the bar resolution $\Barr_*(A)$ is a projective resolution of $A$ as a dg $A$-$A$-bimodule,  we have
\begin{equation*}
\begin{split}
\HH^{m}(A, \Omega_{\nc}^p(A))&\cong \Hom_{\DD(A\otimes A^{\op})}(A, s^{m}\Omega_{\nc}^p(A) )\\
&\cong \Hom_{\KK(A\otimes A^{\op})}(\Barr_*(A), s^{m} \Omega_{\nc}^p(A) ),
\end{split}
\end{equation*}
where $\DD(A\otimes A^{\op})$ is the derived category of dg $A$-$A$-bimodules and $\KK(A\otimes A^{\op})$ is the homotopy category of dg $A$-$A$-bimodules. 
For any dg $A$-$A$-bimodule morphism $f \colon \Barr_{-n}(A) \to s^{m} \Omega_{\nc}^p(A)$ of degree zero, we have a commutative diagram
$$
\xymatrix{
& \Barr_{-n-1}(A) \ar[r]^-{b_{-n-1}} \ar[ld]_-{\theta_{n+1, p+1}(f)}  & \Barr_{-n}(A) \ar[ld]_-{\id\otimes \overline{f}} \ar[d]^-{f} \\
 s^{m-1}((\sA)^{\otimes p+1}\otimes A)\ar[r] &  s^{m}\Barr_{-p}(A) \ar@{->>}[r] & 
s^m((\sA)^{\otimes p}\otimes A)
}
$$
where the maps in the bottom row are from the two middle maps in the short exact sequence (\ref{shortexactsequence}), the map $\id\otimes \overline{f}\colon \Barr_{-n}(A) \rightarrow s^{m} \Barr_{-p}(A)$ is defined by $$(\id\otimes \overline{f})(a_0\otimes \overline{a_1} \otimes \dotsb \otimes \overline{a_{n}} \otimes a_{n+1})= (-1)^{|a_0|m} s^m (a_0 \otimes s^{-m} f(1\otimes \overline{a_1} \otimes \cdots  \otimes \overline{a_{n}} \otimes a_{n+1})),$$ and $\theta_{n+1, p+1}(f)$ is given by (\ref{equation-definition-theta}). This shows that $\mathrm H^*(\theta_p)$ is precisely the connecting morphism $$\vartheta_p\colon\HH^{*}(A, \Omega_{\nc}^p(A))\rightarrow \HH^{*}(A, \Omega_{\nc}^{p+1}(A))$$ of the long exact sequence \eqref{longexactsequence}.


Since filtered colimits commute with homology, there is an isomorphism of  vector spaces $$\HH_{\sg}^*(A, A) \cong \varinjlim_{\theta_p}\HH^{*}(A, \Omega_{\nc}^{p}(A)) \cong \varinjlim_{\vartheta_p}\HH^{*}(A, \Omega_{\nc}^{p}(A)).$$
The fact that this is an isomorphism of algebras follows from the observation that the two products $\cup, \cup'\colon C^*(A,\Omega_{\nc}^p(A)) \otimes C^*(A, \Omega_{\nc}^{q}(A)) \to C^*(A, \Omega_{\nc}^{p+q}(A))$ (see (\ref{cup'}) and Definition \ref{defn-cup}) agree up to chain homotopy. More precisely, let $f\in C^{m, *}(A, \Omega_{\nc}^p(A))$ and $g\in C^{n, *}(A, \Omega_{\nc}^q(A))$. The chain homotopy for $f\cup g - f\cup'g $ is given by  $$
g \bullet_{<0 } f := \sum^q_{i=1} g \bullet_{-i} f,
$$
where $g\bullet_i f$ is given in Definition \ref{definitionbracket} below. 
That is, we have 
\begin{align}
\label{alignhomotopy}
f\cup g - f\cup'g = \delta( g \bullet_{<0} f) -  \delta(g)\bullet_{<0} f -(-1)^{|g|-1} g \bullet_{<0} \delta(f).
\end{align} 
When $q =0$, we note that   $f \cup g = f\cup' g$ and $g\bullet_{<0} f=0$. For the general case, identity (\ref{alignhomotopy}) may be verified by a direct computation. 
\end{proof}

\begin{definition}\label{definitionbracket} 
Let $f\in C^{m, *}(A, \Omega_{\nc}^p(A))$ and $g\in C^{n, *}(A, \Omega_{\nc}^q(A))$. Define a bracket $\{f, g\}\in C^{m+n-1, *}(A, \Omega_{\nc}^{p+q}(A))$ as 
$$
\{f, g\}=f\bullet g-(-1)^{(|f|+1)(|g| +1)} g\bullet f
$$
where  $$f\bullet g=\sum_{i=1}^m f\bullet_i g - \sum_{i=1}^p f\bullet_{-i} g$$ and $f\bullet_i g $ is defined by
\begin{equation*}
f\bullet_i g:=
\begin{cases}
\big(\id^{\otimes q}\otimes f\big) \circ \big(\id^{i-1} \otimes( (\id^{\otimes q}\otimes \pi) \circ g )\otimes \id^{\otimes m-i}\big) & \emph{for $1\leq i\leq m$},\\
\big(\id^{\otimes p+i} \otimes ((\id^{\otimes q}\otimes \pi) \circ g) \otimes \id^{\otimes -i}\big)\circ \big(\id^{\otimes n-1}\otimes f\big)  & \emph{for $-p\leq i\leq -1$},
\end{cases}
\end{equation*}
where $\pi\colon A\twoheadrightarrow \sA$ is the natural projection map of degree $-1$, and we identify 
$\Omega_{\nc}^p(A)$ with $(s\overline{A})^{\otimes p}\otimes A$ as in Lemma \ref{lemma-bimodule}. In particular, when $p=q=0$ we recover the classical Gerstenhaber bracket on 
$C^*(A, A)$. It follows from a direct calculation that the bracket is compatible with the colimit construction, thus the bracket is well-defined on $\calC_{\sg}^*(A, A)$. 
\end{definition}
We will not use the following result in this article but we recall it for general interest.

\begin{theorem}\cite[Corollary 5.3]{Wan}\label{theorem-ger}
The cup product $\cup$ and the bracket $\{ \cdot ,\cdot \}$ defined above give the singular Hochschild cochain complex $\calC_{\sg}^*(A, A)$ the structure of a unital dg algebra and a dg Lie algebra of degree $-1$, respectively. Moreover, $\cup$ and $\{ \cdot ,\cdot \}$  induce a Gerstenhaber algebra structure on $\HH_{\sg}^*(A, A).$
\end{theorem}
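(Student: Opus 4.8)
The plan is to verify the algebraic axioms by reducing everything to the classical Hochschild case, where the analogous statement (the Gerstenhaber structure on $C^*(A,A)$) is well known, and then propagating through the colimit construction. Concretely, the unital dg algebra part has already been established in the preceding proposition, so what remains is: (1) the bracket $\{-,-\}$ is a degree $-1$ chain map turning $\calC_{\sg}^*(A,A)$ into a dg Lie algebra, and (2) $\cup$ and $\{-,-\}$ satisfy the Leibniz/Poisson compatibility on cohomology, making $\HH_{\sg}^*(A,A)$ a Gerstenhaber algebra.

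First I would check that the bracket operations $\bullet_i$ and hence $\bullet$ and $\{-,-\}$ are compatible with the structure maps $\theta_p$, so that the bracket descends to the colimit $\calC_{\sg}^*(A,A)$; this is the point already asserted at the end of Definition \ref{definitionbracket} and follows from a direct but routine sign computation, using that $\theta_p(f) = f \cup d_{\rm dR}$ and that $d_{\rm dR}$ plays no role in the "plug in" operations $\bullet_i$ (one inserts $(\id^{\otimes q}\otimes\pi)\circ g$ into slots of $f$, commuting past the extra $d_{\rm dR}$-tensor factor costs only Koszul signs). Next, at the level of each $C^*(A,\Omega_{\nc}^p(A))$ one can organize the $\bullet_i$ as pre-Lie–type operations: the "positive" part $\sum_{i=1}^m f\bullet_i g$ is exactly the classical Gerstenhaber circle product applied with values in the bimodule $\Omega_{\nc}^p(A)$, while the "negative" part $\sum_{i=1}^p f\bullet_{-i}g$ arises from the left $A$-action on $\Omega_{\nc}^p(A)$ via the formula $a\blacktriangleright(-)=(\pi\otimes\id^{\otimes p})(b_{-p}(a\otimes -))$; the graded pre-Lie identity for the total $\bullet$ then yields that $\{-,-\}$ is a graded Lie bracket of degree $-1$, and that $\delta_{\sg}$ is a derivation for it, exactly as in Gerstenhaber's original argument.

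For the compatibility of $\cup$ and $\{-,-\}$ I would reuse the homotopy $g\bullet_{<0}f$ from identity \eqref{alignhomotopy}: since $\cup$ and $\cup'$ agree up to the homotopy $g\bullet_{<0}f$, and $\cup'$ is built from the classical Hochschild cup product under the isomorphisms $\kappa_{p,q}$ of Remark \ref{differentialform}, the Poisson/Leibniz rule for $\cup'$ and $\{-,-\}$ on cohomology follows from the classical one by functoriality. Transporting back to $\cup$ changes the formula only up to the coboundary of $g\bullet_{<0}f$ and its brackets, hence has no effect on $\HH_{\sg}^*(A,A)$. The graded commutativity of $\cup$ on cohomology was recorded in the previous proposition (it also follows from $d_{\rm dR}\cup f = f\cup d_{\rm dR}$ together with a classical shuffle-homotopy argument), so assembling these pieces gives the Gerstenhaber algebra structure.

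The main obstacle is bookkeeping rather than conceptual: the signs in the $\bullet_i$ operations, in $\theta_p$, and in the identification $\Omega_{\nc}^p(A)\cong(s\overline A)^{\otimes p}\otimes A$ all interact, and one must check that the pre-Lie identity and the homotopy formula \eqref{alignhomotopy} hold on the nose at the cochain level with the stated signs before passing to the colimit. Since none of these computations is illuminating, I would either cite \cite[Corollary 5.3]{Wan}, where the identical statement is proved in the (possibly non-dg) setting, and note that the dg refinement only adds the internal differential $\delta^v$, which is a derivation for all the operations by inspection — so the verification is formally the same — or else relegate the explicit sign check to a reference. In the paper, since this result is not used in the sequel, simply citing \cite{Wan} suffices.
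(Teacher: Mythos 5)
Your proposal is essentially what the paper does: the theorem is stated only for general interest, is not used later, and the paper gives no independent argument, simply citing \cite[Corollary 5.3]{Wan} (with the dg extension handled in \cite{RiWa}), exactly as you conclude. Your supplementary sketch (compatibility of the $\bullet_i$ with the structure maps $\theta_p$, the pre-Lie-type identity for $\bullet$, and the homotopy \eqref{alignhomotopy} relating $\cup$ and $\cup'$) is a reasonable outline of the verification carried out in \cite{Wan}, but it is not required here.
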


\subsection{Invariance of singular Hochschild cohomology under quasi-isomorphisms}
Let $A$ and $B$ be two unital dg algebras. Let  $\varphi\colon  A\rightarrow B$ be a morphism of dg algebras. Any dg $B$-$B$-bimodule can be viewed as a dg $A$-$A$-bimodule via $\varphi$.  
Then  $\varphi$ induces a map of dg $A$-$A$-bimodules $\Omega_{\nc}^p(\varphi)\colon \Omega_{\nc}^p(A)\rightarrow \Omega_{\nc}^p(B)$ for $p\in \Z_{\geq 0}$ given by 
$$
\Omega_{\nc}^p(\varphi)(\overline{a_1}\otimes \cdots \otimes \overline{a_p}\otimes a_{p+1}):=
\overline{\varphi(a_1)}\otimes \cdots \otimes \overline{\varphi(a_{p})}\otimes \varphi(a_{p+1}),
$$
where we use Lemma \ref{lemma-bimodule} to identity $\Omega_{\nc}^p(A)$ with $(\sA)^{\otimes p}\otimes A$ and $\Omega_{\nc}^p(B)$ with $(s\overline{B})^{\otimes p}\otimes B$. Moreover, if $\varphi$ is a quasi-isomorphism, then so is $\Omega_{\nc}^p(\varphi)$. 
\\

Now let us construct the singular Hochschild cochain complex $\calC_{\sg}^*(A, B)$ with coefficients in $B$. Consider the Hochschild cochain complex $C^*(A, \Omega_{\nc}^p(B))$ with coefficients in the dg $A$-$A$-bimodule $\Omega_{\nc}^p(B)$. Similar to (\ref{equation-definition-theta}),  we define a morphism of cochain complexes $$\theta^{A,B}_p\colon C^*(A, \Omega_{\nc}^p(B))\rightarrow C^*(A, \Omega_{\nc}^{p+1}(B))$$ which sends $f \in \Hom_{\mathbb K}((\sA)^{\otimes m}, (s\overline B)^{\otimes p}\otimes B)$ to $\theta^{A,B}_p(f) \in\Hom_{\mathbb K}((\sA)^{\otimes m+1}, (s\overline B)^{\otimes p+1}\otimes B)$ by 
$$
\theta^{A,B}_p(f) (\overline{a_1}\otimes \cdots \otimes \overline{a_{m+1}} ) = (-1)^{(|a_1|-1)|f|} \overline{\varphi(a_1)}\otimes f(\overline{a_2}\otimes
\cdots \overline{a_{m+1}} ).
$$
The maps $\theta^{A,B}_p$ form an inductive system of cochain complexes and we may define the {\it singular Hochschild cochain complex of $A$ with coefficients in $B$} as 
$$
\calC_{\sg}^*(A, B):= \varinjlim_{\theta^{A,B}_p} C^{*}(A, \Omega_{\nc}^{p}(B))
$$
with the induced differential. We denote its cohomology by $\HH_{\sg}^*(A, B)$.
\\

Observe that, for each $p\in \Z_{\geq 0}$, there is a zig-zag of morphisms of cochain complexes  
\begin{equation}\label{equation-zig-zag}
\xymatrix@C=4pc{
C^*(A, \Omega_{\nc}^p(A)) \ar[r] ^-{C^*(A,\Omega_{\nc}^p(\varphi))}& C^*(A, \Omega_{\nc}^p(B)) & C^*(B, \Omega_{\nc}^p(B))\ar[l]_-{C^*(\varphi, \Omega_{\nc}^p(B))}.
}
\end{equation}
These zig-zags are compatible with the inductive systems, thus we obtain an induced zig-zag of morphisms  between singular Hochschild cochain complexes
\begin{align}\label{align:zig-zagsg}
\xymatrix@C=4pc{
\calC_{\sg}^*(A, A) \ar[r] ^-{\calC_{\sg}^*(A, \varphi)}& \calC_{\sg}^*(A, B) & \calC^*_{\sg}(B, B)\ar[l]_-{\calC_{\sg}^*(\varphi, B)}.
}
\end{align}

\begin{lemma}
Let $\varphi\colon A\rightarrow B$ be a quasi-isomorphism of dg algebras. Then $\calC_{\sg}^*(A, \varphi)$ and $\calC_{\sg}^*(\varphi, B)$ are both quasi-isomorphisms, namely, the zig-zag above is one of quasi-isomorphisms. 
\end{lemma}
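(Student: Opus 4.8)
The plan is to prove both maps are quasi-isomorphisms by reducing to a statement about each term of the inductive system and then invoking the fact that filtered colimits are exact (they commute with taking cohomology). For the first map $\calC_{\sg}^*(A,\varphi)$, recall this is the colimit of the maps $C^*(A,\Omega_{\nc}^p(\varphi))\colon C^*(A,\Omega_{\nc}^p(A))\to C^*(A,\Omega_{\nc}^p(B))$ as in the left arrow of \eqref{equation-zig-zag}. So it suffices to show each $C^*(A,\Omega_{\nc}^p(\varphi))$ is a quasi-isomorphism. Since $\varphi$ is a quasi-isomorphism, $\Omega_{\nc}^p(\varphi)\colon \Omega_{\nc}^p(A)\to\Omega_{\nc}^p(B)$ is a quasi-isomorphism of dg $A$-$A$-bimodules, as already noted in the excerpt. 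The bar resolution $\Barr_*(A)$ is a projective (indeed semifree) resolution of $A$ as a dg $A$-$A$-bimodule, and $C^*(A,M)=\Hom^*_{A\otimes A^{\op}}(\Barr_*(A),M)$, so $C^*(A,\Omega_{\nc}^p(-))$ computes $\mathrm{RHom}_{A\otimes A^{\op}}(A,-)$; applying a right-derived Hom functor out of a fixed object to a quasi-isomorphism yields a quasi-isomorphism, hence $C^*(A,\Omega_{\nc}^p(\varphi))$ is a quasi-isomorphism for every $p$. Passing to the colimit, $\calC_{\sg}^*(A,\varphi)$ is a quasi-isomorphism.

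For the second map $\calC_{\sg}^*(\varphi,B)$, the same strategy applies but the restriction-of-scalars functor along $\varphi$ must be handled. This is the colimit of $C^*(\varphi,\Omega_{\nc}^p(B))\colon C^*(B,\Omega_{\nc}^p(B))\to C^*(A,\Omega_{\nc}^p(B))$, where on the right $\Omega_{\nc}^p(B)$ is viewed as an $A$-$A$-bimodule via $\varphi$. So I need: for a quasi-isomorphism $\varphi\colon A\to B$ and a dg $B$-$B$-bimodule $N$, the natural comparison map $\mathrm{RHom}_{B\otimes B^{\op}}(B,N)\to\mathrm{RHom}_{A\otimes A^{\op}}(A,N)$ — i.e.\ the comparison between Hochschild cohomology of $B$ with coefficients in $N$ and Hochschild cohomology of $A$ with coefficients in $N$ pulled back — is a quasi-isomorphism. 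This is a standard invariance property of Hochschild cohomology: one can compare the bar resolutions, using that $\Barr_*(A)\otimes_A B\otimes_B\cdots$ maps quasi-isomorphically, or more cleanly observe that $B$ is itself quasi-isomorphic to $A$ as an $A$-$A$-bimodule (via $\varphi$), so that $\mathrm{RHom}_{A\otimes A^{\op}}(A,N)\simeq\mathrm{RHom}_{A\otimes A^{\op}}(B,N)$, and then that the derived restriction functor along the quasi-isomorphism $A\otimes A^{\op}\to B\otimes B^{\op}$ is fully faithful on the relevant subcategory, giving $\mathrm{RHom}_{A\otimes A^{\op}}(B,N)\simeq\mathrm{RHom}_{B\otimes B^{\op}}(B,N)$. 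Either way, each $C^*(\varphi,\Omega_{\nc}^p(B))$ is a quasi-isomorphism, and passing to the colimit finishes the claim.

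The main obstacle is the bookkeeping in the second map: one must be careful that the structure maps $\theta_p^{A,B}$ (which involve $\varphi$ in their very definition) are genuinely compatible with the zig-zag \eqref{equation-zig-zag}, so that the colimit of quasi-isomorphisms is again the map $\calC_{\sg}^*(\varphi,B)$ and not some other comparison map. Concretely I would check that the square relating $\theta_p$ on $C^*(A,\Omega_{\nc}^p(A))$, $\theta_p^{A,B}$ on $C^*(A,\Omega_{\nc}^p(B))$, and $\theta_p$ on $C^*(B,\Omega_{\nc}^p(B))$ commutes — which it does because all three are given by cup product with the de Rham cocycle $d_{\rm dR}$ (via \eqref{deRham}), and $\varphi$ sends the de Rham cocycle of $A$ to that of $B$. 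Once this compatibility is in place, exactness of filtered colimits does the rest. I would also remark that the quasi-isomorphism $\Omega_{\nc}^p(\varphi)$ used here is not an isomorphism in general (it is only a quasi-isomorphism), which is why one genuinely needs the derived-functor formulation rather than a naive level-wise argument.

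\begin{proof}
By construction $\calC_{\sg}^*(A,\varphi)$ (resp.\ $\calC_{\sg}^*(\varphi,B)$) is the colimit of the left (resp.\ right) maps in \eqref{equation-zig-zag} along the compatible inductive systems $\theta_p$, $\theta_p^{A,B}$. Since filtered colimits of cochain complexes commute with cohomology, it suffices to prove that for each $p\in\Z_{\geq 0}$ the maps $C^*(A,\Omega_{\nc}^p(\varphi))$ and $C^*(\varphi,\Omega_{\nc}^p(B))$ are quasi-isomorphisms.

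For the first, recall that the bar resolution $\Barr_*(A)$ is a projective resolution of $A$ as a dg $A$-$A$-bimodule and that $C^*(A,M)\cong \Hom^*_{A\otimes A^{\op}}(\Barr_*(A),M)$ naturally in the dg $A$-$A$-bimodule $M$; hence $C^*(A,-)$ computes $\mathrm{RHom}_{A\otimes A^{\op}}(A,-)$ and sends quasi-isomorphisms of dg $A$-$A$-bimodules to quasi-isomorphisms. As noted above, since $\varphi$ is a quasi-isomorphism so is $\Omega_{\nc}^p(\varphi)\colon \Omega_{\nc}^p(A)\to\Omega_{\nc}^p(B)$, and therefore $C^*(A,\Omega_{\nc}^p(\varphi))$ is a quasi-isomorphism.

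For the second, view $\Omega_{\nc}^p(B)$ as a dg $A$-$A$-bimodule via $\varphi$. Since $\varphi\colon A\to B$ is a quasi-isomorphism, $B$ is quasi-isomorphic to $A$ as a dg $A$-$A$-bimodule, so $\mathrm{RHom}_{A\otimes A^{\op}}(A,\Omega_{\nc}^p(B))\simeq\mathrm{RHom}_{A\otimes A^{\op}}(B,\Omega_{\nc}^p(B))$; and the derived restriction of scalars along the quasi-isomorphism $A\otimes A^{\op}\to B\otimes B^{\op}$ identifies the latter with $\mathrm{RHom}_{B\otimes B^{\op}}(B,\Omega_{\nc}^p(B))$. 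Tracking through the bar resolutions, this chain of quasi-isomorphisms is computed precisely by the comparison map $C^*(\varphi,\Omega_{\nc}^p(B))\colon C^*(B,\Omega_{\nc}^p(B))\to C^*(A,\Omega_{\nc}^p(B))$, which is therefore a quasi-isomorphism.

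Finally, the structure maps are compatible with the zig-zag \eqref{equation-zig-zag}: by \eqref{deRham} each $\theta_p$ and $\theta_p^{A,B}$ is cup product with the de Rham cocycle $d_{\rm dR}$, and $\varphi$ carries the de Rham cocycle of $A$ to that of $B$, so the relevant squares commute. Passing to the filtered colimit over $p$ we conclude that $\calC_{\sg}^*(A,\varphi)$ and $\calC_{\sg}^*(\varphi,B)$ are quasi-isomorphisms.
\end{proof}
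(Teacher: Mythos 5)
Your overall scaffolding (prove the statement at each fixed $p$ for the zig-zag \eqref{equation-zig-zag}, check compatibility with the structure maps, then use that cohomology commutes with filtered colimits) matches the paper, but your level-$p$ argument is genuinely different: the paper equips all three complexes in \eqref{equation-zig-zag} with the complete decreasing filtration by the Hochschild degree $m$, observes that the maps on associated graded pieces $\Hom_{\mathbb K}((\sA)^{\otimes m},\Omega_{\nc}^p(A))\to\Hom_{\mathbb K}((\sA)^{\otimes m},\Omega_{\nc}^p(B))\leftarrow\Hom_{\mathbb K}((s\overline B)^{\otimes m},\Omega_{\nc}^p(B))$ are quasi-isomorphisms (everything is over a field), and concludes by the standard spectral sequence comparison; you instead argue via $C^*(A,-)\cong\Hom_{A\otimes A^{\op}}^*(\Barr_*(A),-)$ and derived invariance. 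For the map $\calC_{\sg}^*(A,\varphi)$ your argument is fine, granting (as the paper itself does in the proof of Proposition \ref{proposition3.5}) that the normalized bar resolution is homotopically projective over $A\otimes A^{\op}$.

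For $\calC_{\sg}^*(\varphi,B)$, however, the sentence ``tracking through the bar resolutions, this chain of quasi-isomorphisms is computed precisely by the comparison map $C^*(\varphi,\Omega_{\nc}^p(B))$'' is exactly where the content lies, and as written there is a gap. The naive way to track it is to factor $C^*(\varphi,\Omega_{\nc}^p(B))$ through $\Hom_{A\otimes A^{\op}}^*(\Barr_*(B),\Omega_{\nc}^p(B))$, i.e.\ restriction of scalars followed by precomposition with $\Barr_*(\varphi)\colon\Barr_*(A)\to\Barr_*(B)$; but $\Barr_*(B)$ restricted along $A\otimes A^{\op}\to B\otimes B^{\op}$ is in general \emph{not} homotopically projective over $A\otimes A^{\op}$, so this intermediate complex does not obviously compute ${\rm RHom}_{A\otimes A^{\op}}(B,\Omega_{\nc}^p(B))$, and ``a quasi-isomorphism between complexes computing the same RHom'' is not available at that point. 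To close the gap you should either (i) use the extension/restriction adjunction: $\Hom_{A\otimes A^{\op}}^*(\Barr_{*}(A),\Omega_{\nc}^p(B))\cong\Hom_{B\otimes B^{\op}}^*(B\otimes_A\Barr_*(A)\otimes_A B,\Omega_{\nc}^p(B))$, identify $B\otimes_A\Barr_*(A)\otimes_A B$ with the two-sided bar construction on $(B,A,B)$, note that it and $\Barr_*(B)$ are both homotopically projective over $B\otimes B^{\op}$ and that the natural map between them is a quasi-isomorphism (using $\varphi$ twice), hence a homotopy equivalence, and check that under the adjunction $C^*(\varphi,\Omega_{\nc}^p(B))$ is precomposition with this map; or (ii) simply run the filtration/spectral-sequence comparison as in the paper, which handles both maps uniformly. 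A minor further point: your justification of the compatibility of the structure maps via ``cup product with $d_{\rm dR}$'' is loose, since no cup product is defined on $C^*(A,\Omega_{\nc}^p(B))$ (the paper explicitly notes there is no natural cup product on $\calC_{\sg}^*(A,B)$); the compatibility is, however, immediate from the defining formulas of $\theta_p$, $\theta_p^{A,B}$ and the maps in \eqref{equation-zig-zag}, so this is easily repaired.
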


\begin{proof}
Note that all three complexes in the zig-zag (\ref{equation-zig-zag}) have complete decreasing filtrations with  the associated quotients 
$$
\xymatrix{
\Hom_{\mathbb K}((\sA)^{\otimes m}, \Omega_{\nc}^p(A))\ar[r] & \Hom_{\mathbb K}((\sA)^{\otimes m}, \Omega_{\nc}^p(B)) & 
\Hom_{\mathbb K}((s\overline B)^{\otimes m}, \Omega_{\nc}^p(B))\ar[l]
}
$$
These associated quotients are quasi-isomorphic, thus by the usual spectral sequence argument we may conclude that the morphisms in the zig-zag in (\ref{equation-zig-zag}) are quasi-isomorphisms for each $p\in \Z_{\geq 0}$. It follows that $\calC_{\sg}^*(A, \varphi)$ and $\calC_{\sg}^*(\varphi, B)$ are quasi-isomorphisms.
\end{proof}

\begin{remark}
As in Proposition \ref{proposition3.5},  we have a natural  isomorphism of graded vector spaces 
\begin{align}
\label{iso-ab}
\HH_{\sg}^*(A, B)\cong \varinjlim_{\vartheta_p^{A, B}} \HH^{*}(A, \Omega_{\nc}^{p}(B)),
\end{align}
where the colimit on the right hand side is taken along the connecting morphisms 
$$
\vartheta_p^{A, B} \colon \HH^{*}(A, \Omega_{\nc}^p(B)) \to \HH^{*+1}(A, s^{-1}\Omega_{\nc}^{p+1}(B)) =  \HH^{*}(A, \Omega_{\nc}^{p+1}(B)).
$$
 
In general, there is no natural cup product on $\calC_{\sg}^*(A, B)$ as in Definition \ref{defn-cup}. However, we have a well-defined cup product $\cup'$ at the cohomology level induced by the composition
\begin{align*}
\HH^m(A, \Omega_{\nc}^p(B))\otimes \HH^n(A, \Omega_{\nc}^q(B)) \rightarrow \HH^{m+n} (A, \Omega_{\nc}^p(B) \otimes_A \Omega_{\nc}^q(B))
\xrightarrow{}  \HH^{m+n}(A, \Omega_{\nc}^{p+q} (B) ) ,
\end{align*}
where the first map is given by the classical Hochschild cup product construction using the dg $A$-$A$-bimodule structure on $\Omega_{\nc}^i(B)$ for $i=p,q$ via $\varphi\colon A \to B$, and the second isomorphism is induced by the following natural composition $$
\Omega_{\nc}^p(B) \otimes_A \Omega_{\nc}^q(B) \rightarrow \Omega_{\nc}^p(B) \otimes_B \Omega_{\nc}^q(B)\xrightarrow{\kappa_{p, q}} \Omega_{\nc}^{p+q}(B),
$$
where $\kappa_{p, q}$ is defined in Remark \ref{differentialform}. This yields a product  
\begin{align}
\label{mixedcup}\cup' \colon \HH^m(A, \Omega_{\nc}^p(B))\otimes \HH^n(A, \Omega_{\nc}^q(B))\rightarrow \HH^{m+n}(A, \Omega_{\nc}^{p+q}(B)).
\end{align}
By the  functoriality of the classical Hochschild cup product, we have $$
\vartheta_p^{A, B}(f) \cup' g =  f \cup' \vartheta_q^{A, B}(g) = \vartheta_{p+q}^{A, B}(f \cup' g)
$$ for any $f \in \HH^{m}(A, \Omega_{\nc}^p(B))$ and $ g \in \HH^{n}(A, \Omega_{\nc}^q(B))$, so the cup product $\cup'$ in (\ref{mixedcup}) induces a well-defined product on the colimit. Thus we obtain a product (via the isomorphism (\ref{iso-ab}))
$$
\cup'\colon \HH_{\sg}^*(A, B)\otimes  \HH_{\sg}^*(A, B) \to\HH_{\sg}^*(A, B) .
$$ 
\end{remark}

\begin{proposition}
\label{proposition-7.4}
The zig-zag of quasi-isomorphisms $$
\xymatrix@C=4pc{
\calC_{\sg}^*(A, A) \ar[r] ^-{\calC_{\sg}^*(A, \varphi)}& \calC_{\sg}^*(A, B) & \calC^*_{\sg}(B, B)\ar[l]_-{\calC_{\sg}^*(\varphi, B)}.
}
$$
induces isomorphisms of graded algebras
$$
\xymatrix@C=5.1pc{
(\HH_{\sg}^*(A, A), \cup) \ar[r]^-{\HH_{\sg}^*(A, \varphi)}  & (\HH_{\sg}^*(A, B), \cup') \ar[r]^-{\HH_{\sg}^*(\varphi, B)^{-1}}  & (\HH_{\sg}^*(B, B), \cup).
}
$$
\end{proposition}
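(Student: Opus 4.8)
The plan is to pass through the colimit descriptions of the three singular Hochschild cohomologies and reduce the statement to the functoriality of the \emph{classical} Hochschild cup product. By the preceding lemma the maps $\calC_{\sg}^*(A,\varphi)$ and $\calC_{\sg}^*(\varphi,B)$ are quasi-isomorphisms, so on cohomology $\HH_{\sg}^*(A,\varphi)$ and $\HH_{\sg}^*(\varphi,B)$ are already isomorphisms of graded vector spaces; hence it suffices to prove that each of them is an algebra homomorphism for the indicated products, for then $\HH_{\sg}^*(\varphi,B)^{-1}$ is automatically an algebra isomorphism and the composite yields $(\HH_{\sg}^*(A,A),\cup)\cong(\HH_{\sg}^*(B,B),\cup)$. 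As noted in the remark just before this proposition, there is in general no cochain-level cup product on $\calC_{\sg}^*(A,B)$, so one cannot argue on cochains; instead I would work at the cohomology level with the products $\cup'$.

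First I would set up the identifications. Proposition \ref{proposition3.5}, applied to $A$ and to $B$, gives algebra isomorphisms $(\HH_{\sg}^*(A,A),\cup)\cong(\varinjlim_{\vartheta_p}\HH^*(A,\Omega_{\nc}^p(A)),\cup')$ and $(\HH_{\sg}^*(B,B),\cup)\cong(\varinjlim_{\vartheta_p}\HH^*(B,\Omega_{\nc}^p(B)),\cup')$, while \eqref{iso-ab} identifies $(\HH_{\sg}^*(A,B),\cup')$ with $(\varinjlim_{\vartheta_p^{A,B}}\HH^*(A,\Omega_{\nc}^p(B)),\cup')$ for the product $\cup'$ of \eqref{mixedcup}. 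Under these identifications $\HH_{\sg}^*(A,\varphi)$ becomes the colimit of the maps $\HH^*(A,\Omega_{\nc}^p(\varphi))$ — this uses $\HH^*(\theta_p)=\vartheta_p$ from the proof of Proposition \ref{proposition3.5}, together with the fact that the squares formed by the $\theta_p$'s and the $\Omega_{\nc}^p(\varphi)$'s commute, which holds by the definition of $\theta_p^{A,B}$ — and $\HH_{\sg}^*(\varphi,B)$ becomes the colimit of the change-of-rings maps $\HH^*(\varphi,\Omega_{\nc}^p(B))$.

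The heart of the proof is then two diagram chases. For $\HH_{\sg}^*(A,\varphi)$ I would check that the square with top edge $\kappa_{p,q}^A\colon\Omega_{\nc}^p(A)\otimes_A\Omega_{\nc}^q(A)\to\Omega_{\nc}^{p+q}(A)$, bottom edge the composite $\Omega_{\nc}^p(B)\otimes_A\Omega_{\nc}^q(B)\to\Omega_{\nc}^p(B)\otimes_B\Omega_{\nc}^q(B)\xrightarrow{\kappa_{p,q}^B}\Omega_{\nc}^{p+q}(B)$, and vertical edges $\Omega_{\nc}^p(\varphi)\otimes_A\Omega_{\nc}^q(\varphi)$ and $\Omega_{\nc}^{p+q}(\varphi)$, commutes; this is immediate from the explicit formula for $\kappa_{p,q}$ in Remark \ref{differentialform}. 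Combining this with the naturality of the classical Hochschild cup product in the coefficient bimodule shows that each $\HH^*(A,\Omega_{\nc}^p(\varphi))$ intertwines the two $\cup'$ products, so in the colimit $\HH_{\sg}^*(A,\varphi)$ is an algebra map. For $\HH_{\sg}^*(\varphi,B)$ the relevant input is instead the compatibility of the classical Hochschild cup product with restriction of scalars along $\varphi$: for a dg $B$-$B$-bimodule $N$ viewed as an $A$-$A$-bimodule via $\varphi$, the cup product on $\HH^*(B,N)$ maps to that on $\HH^*(A,N)$ compatibly with the natural map $N\otimes_A N'\to N\otimes_B N'$ and hence with $\kappa_{p,q}^B$; so each $\HH^*(\varphi,\Omega_{\nc}^p(B))$ intertwines $\cup'$ on $\HH^*(B,\Omega_{\nc}^p(B))$ with $\cup'$ on $\HH^*(A,\Omega_{\nc}^p(B))$, and passing to the colimit shows $\HH_{\sg}^*(\varphi,B)$ is an algebra map.

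I expect the main obstacle to be bookkeeping rather than any substantial new idea: one must keep track of which of the several products (Definition \ref{defn-cup}, \eqref{cup'}, \eqref{mixedcup}) lives on which complex, and verify that the transition maps $\vartheta_p$ and $\vartheta_p^{A,B}$ of the colimits are respected — the subtlety being that the honest cochain-level cup product is available only on the pure complexes $\calC_{\sg}^*(A,A)$ and $\calC_{\sg}^*(B,B)$, while the discrepancy between $\cup$ and $\cup'$ there has already been absorbed by the homotopy $g\bullet_{<0}f$ of Proposition \ref{proposition3.5}. Once the reduction to $\cup'$ and to naturality of the ordinary Hochschild cup product is carried out, nothing beyond the two commutativities above is needed.
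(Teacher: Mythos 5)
Your proposal is correct and follows essentially the same route as the paper: the paper's proof consists precisely of the commutative diagram relating the $\cup'$ products on $\HH^*(A,\Omega_{\nc}^p(A))$, $\HH^*(A,\Omega_{\nc}^p(B))$ and $\HH^*(B,\Omega_{\nc}^p(B))$ via $\HH^*(A,\Omega_{\nc}^p(\varphi))$ and $\HH^*(\varphi,\Omega_{\nc}^p(B))$, its compatibility with the structure maps $\vartheta$, and the identification $\cup=\cup'$ on $\HH_{\sg}^*(A,A)$ and $\HH_{\sg}^*(B,B)$ from Proposition \ref{proposition3.5}. Your two diagram chases (compatibility of $\kappa_{p,q}$ with $\varphi$, and of the classical cup product with restriction of scalars) simply spell out the verification the paper leaves implicit.
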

\begin{proof}
First for any $m,n\in \mathbb Z$ and $p, q \in \mathbb Z_{\geq 0}$ we have the following commutative diagram 
\begin{align*}
\xymatrix{
\mathrm{HH}^m(A, \Omega_{\nc}^p(A)) \otimes \mathrm{HH}^n(A, \Omega_{\nc}^q(A)) \ar[d]_-{\mathrm{HH}^m(A,\Omega_{\nc}^p(\varphi))\otimes \mathrm{HH}^n(A,\Omega_{\nc}^q(\varphi))} \ar[r]^-{\cup'}  & \mathrm{HH}^{m+n}(A, \Omega_{\nc}^{p+q}(A))\ar[d]^-{\mathrm{HH}^{m+n}(A,\Omega_{\nc}^{p+q}(\varphi))} \\
\mathrm{HH}^m(A, \Omega_{\nc}^p(B)) \otimes \mathrm{HH}^n(A, \Omega_{\nc}^q(B))\ar[r]^-{\cup'}  & \mathrm{HH}^{m+n}(A, \Omega_{\nc}^{p+q}(B))\\
\mathrm{HH}^m(B, \Omega_{\nc}^p(B)) \otimes \mathrm{HH}^n(B, \Omega_{\nc}^q(B))\ar[u]^-{\mathrm{HH}^m(\varphi, \Omega_{\nc}^p(B))\otimes \mathrm{HH}^n(\varphi, \Omega_{\nc}^q(B))} \ar[r]^-{\cup'}  & \mathrm{HH}^{m+n}(B, \Omega_{\nc}^{p+q}(B))\ar[u]_-{\mathrm{HH}^{m+n}(\varphi, \Omega_{\nc}^{p+q}(B))}.
}
\end{align*}
The above commutative diagram is compatible with the structure maps $\vartheta$. It follows from Proposition \ref{proposition3.5} that $\cup =\cup'$ on $\HH_{\sg}^*(A, A)$ and $\HH_{\sg}^*(B, B)$, respectively. 
\end{proof}

\section{A quasi-isomorphism relating the Goresky-Hingston product and the singular Hochschild cup product}
\label{section4}
 In Subsections \ref{section-3.4} and \ref{subsection:4.2}, we assume that $A$ is a dg Frobenius algebra of degree $k>0$ and we follow the notation of Section \ref{section2}. 
 
\subsection{A homotopy retract between the Tate-Hochschild complex and the singular Hochschild cochain complex} \label{section-3.4}

\begin{definition}\label{definition:TH}
Let $A$ be a dg Frobenius algebra of degree $k >0$.  The \textit{Tate-Hochschild complex }of $A$, denoted by $(\calD^*(A, A), \delta)$, is the {\it totalization} complex of the double complex $\calD^{*,*}(A,A)$ obtained by connecting the Hochschild chains and cochains as
$$
\calD^{*,*}(A,A) = \cdots \to s^{1-k}C_{-1,*}(A,A) \to s^{1-k}C_{0,*}(A,A) \xrightarrow{\gamma} C^{0,*}(A,A) \to C^{1,*}(A,A) \to \cdots
$$
where  the differential $\gamma$ is given by  $$
\gamma(a) = \sum_{i }(-1)^{|f_i||a|} e_i a f_i, \quad \text{for any $a \in A$,}$$  
where we recall that $\sum_{i } e_i\otimes f_i =\Delta(1) $ and $C^{0,*}(A,A) \cong A \cong C_{0, *}(A, A)$. 
By totalization we mean the direct sum totalization in the Hochschild chains direction and the direct product totalization in the Hochschild cochains direction. More precisely, for each $n\in \mathbb Z$ we have 
\begin{equation*}
\begin{split}
\calD^n(A, A)&= \prod_{p\in \Z_{\geq 0}} \Hom_{\mathbb{K}}((\sA)^{\otimes p}, A)^n \oplus \bigoplus_{p\in \Z_{\geq 0}}( (\sA)^{\otimes p}\otimes A)^{n-k+1}\\
&=C^n(A, A)\oplus C_{n-k+1}(A, A).
\end{split}
\end{equation*}
We denote by $\delta: \mathcal{D}^*(A,A) \to \mathcal{D}^{*+1}(A,A)$ the differential of the totalization. 
\end{definition}
In other words, the Tate-Hochschild complex $(\calD^*(A, A), \delta)$ is the mapping cone of the morphism 
\begin{align}\label{mappingcone}
\gamma \colon s^{-k} C_*(A, A) \to C^*(A, A)
\end{align}
where $\gamma(\alpha) = 0$ if $\alpha \in C_{-m, *}(A, A)$ for $m\neq 0$ and $\gamma(a) =  \sum_{i }(-1)^{|f_i||a|} e_i a f_i$ if $a \in A=C_{0, *}(A, A)$.

\begin{theorem}\label{thm:homtopyretract}
There exists a homotopy retract of cochain complexes.
\begin{align}
\label{equation-homotopy-transfer}
\begin{tikzpicture}[baseline=-2.6pt,description/.style={fill=white,inner sep=1pt,outer sep=0}]
\matrix (m) [matrix of math nodes, row sep=0em, text height=1.5ex, column sep=3em, text depth=0.25ex, ampersand replacement=\&, inner sep=1.5pt]
{
\calD^*(A, A) \& \calC_{\sg}^*(A, A) \\
};
\path[->,line width=.4pt,font=\scriptsize, transform canvas={yshift=.4ex}]
(m-1-1) edge node[above=-.4ex] {$\iota$} (m-1-2)
;
\path[->,line width=.4pt,font=\scriptsize, transform canvas={yshift=-.4ex}]
(m-1-2) edge node[below=-.4ex] {$\Pi$} (m-1-1)
;
\path[->,line width=.4pt,font=\scriptsize, looseness=8, in=30, out=330]
(m-1-2.355) edge node[right=-.4ex] {$H$} (m-1-2.5)
;
\end{tikzpicture}
\end{align}
Namely, $\iota$ and $\Pi$ are morphisms of cochain complexes such that $$ \Pi\circ \iota=\id\quad \text{and}\quad \id-\iota\circ \Pi =\delta_{\sg} \circ H  +H \circ \delta_{\sg}.$$
In particular, $\iota$ is a quasi-isomorphism of cochain complexes. 
\end{theorem}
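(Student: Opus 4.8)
The goal is to construct the three maps $\iota$, $\Pi$, $H$ of the homotopy retract and verify the relations $\Pi\circ\iota=\id$ and $\id-\iota\circ\Pi=\delta_{\sg}\circ H+H\circ\delta_{\sg}$. The conceptual starting point is Remark~\ref{remark:CY}: for a dg Frobenius algebra of degree $k$, the inverse dualizing complex is quasi-isomorphic to $s^{-k}A$, and the Tate-Hochschild complex $\calD^*(A,A)$ is built precisely by splicing the Hochschild cochain complex together with the shifted Hochschild chain complex (which models $C^*(A,A^e)\cong A^!$) along the connecting map $\gamma$. So I would think of $\calD^*(A,A)$ as the mapping cone in \eqref{mappingcone} and of $\calC_{\sg}^*(A,A)=\varinjlim_p C^*(A,\Omega_{\nc}^p(A))$ as the colimit along cup product with $d_{\rm dR}$. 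The first task is therefore to exhibit, at each finite stage and compatibly with the structure maps, a comparison between the cone and the $p$-th term of the telescope.

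\textbf{Construction of the maps.} First I would write down $\iota$. On the Hochschild cochain part $C^*(A,A)=C^*(A,\Omega_{\nc}^0(A))$, the map $\iota$ should be the canonical map into the colimit $\calC_{\sg}^*(A,A)$. On the shifted Hochschild chain part $s^{1-k}C_{-p,*}(A,A)=s^{1-k}\big((s\overline A)^{\otimes p}\otimes A\big)$, I would use the Frobenius structure to send a chain $\overline{a_1}\otimes\cdots\otimes\overline{a_p}\otimes a_{p+1}$ to an element of $C^{0,*}(A,\Omega_{\nc}^p(A))\cong\Omega_{\nc}^p(A)\cong(s\overline A)^{\otimes p}\otimes A$ — essentially the identity on the underlying tensor, up to signs and a shift, composed with the canonical map into the colimit. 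The subtlety is matching the two differentials: the internal differential on $\Omega_{\nc}^p(A)$ together with the Hochschild differential must reproduce $\partial_v+\partial_h$ on chains, and the "boundary" piece that in the cone is glued via $\gamma$ must correspond to the connecting morphism $\vartheta_p$ (i.e.\ cup with $d_{\rm dR}$, by \eqref{deRham}) together with the identification coming from the short exact sequence \eqref{shortexactsequence}. I expect that $\gamma(a)=\sum_i(-1)^{|f_i||a|}e_iaf_i$ is exactly what pops out of the composite $A\to\Omega_{\nc}^1(A)\to\cdots\to C^0(A,A)$ using the Casimir identities of Lemma~\ref{lemma:bascipropertyeifi}; verifying this sign-by-sign is one of the routine-but-delicate computations. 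For $\Pi$ I would go the other way: a cochain $f\in C^m(A,\Omega_{\nc}^p(A))$ representing a class in the colimit gets sent to its $p$-fold "de Rham differential part" landing in the $p$ tensor slots plus a Hochschild-cochain remainder, using $\rho\colon A\xrightarrow{\cong}A^\vee$ to convert the $\Omega_{\nc}^p$-valued data back into honest Hochschild chains of degree $k$ higher. The homotopy $H$ on $\calC_{\sg}^*(A,A)$ should be the standard contracting homotopy of the telescope/colimit: it shifts $C^*(A,\Omega_{\nc}^p(A))$ down to $C^*(A,\Omega_{\nc}^{p-1}(A))$, realizing that, after passing to the colimit, the "extra" de Rham direction is killed.

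\textbf{Verification of the identities.} With the maps in hand, $\Pi\circ\iota=\id$ should be essentially formal: on the cochain part both $\iota$ and $\Pi$ are the canonical colimit maps and their partial inverse, while on the chain part the round trip through $\rho$ and $\rho^{-1}$ is the identity by non-degeneracy. The relation $\id-\iota\circ\Pi=\delta_{\sg}H+H\delta_{\sg}$ is the heart of the matter and is where I expect the real work: one must show that any element of $\calC_{\sg}^*(A,A)$, which lives at some finite stage $C^*(A,\Omega_{\nc}^p(A))$, is — modulo a $\delta_{\sg}$-boundary built from $H$ — in the image of $\iota$, i.e.\ can be represented (up to homotopy and after applying enough structure maps) by data supported in the cone $\calD^*(A,A)$. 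Concretely I would induct on $p$: using the short exact sequence \eqref{shortexactsequence} $0\to s^{-1}\Omega_{\nc}^{p+1}(A)\to\Barr_{-p}(A)\to\Omega_{\nc}^p(A)\to 0$ and the contracting homotopy $\chi$ of Remark~\ref{differentialform}, every cochain into $\Omega_{\nc}^p(A)$ splits, up to $\delta$-homotopy, into a piece coming from $\Barr_{-p}(A)$ (which, via the Frobenius/Calabi-Yau identification $s^{-k}A\cong\Hom_{A^e}(A,A^e)$, contributes the Hochschild \emph{chain} direction of the cone) and a piece in $s^{-1}\Omega_{\nc}^{p+1}(A)$ (which is absorbed into the next stage and killed by $H$). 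The main obstacle will be organizing the signs and shifts in this inductive comparison so that the connecting maps $\vartheta_p=\mathrm H^*(\theta_p)$ (Proposition~\ref{proposition3.5}), the Casimir relations, and the mapping-cone differential of $\calD^*(A,A)$ all line up coherently across all $p$ simultaneously; once the $p=0$ and $p=1$ cases are nailed down explicitly, the general case should propagate, and the spectral-sequence/filtration argument already used in the invariance lemma gives a clean way to see that $\iota$ is a quasi-isomorphism as the stated consequence (or, more directly, because a deformation retract is in particular a quasi-isomorphism).
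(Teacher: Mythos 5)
There is a genuine gap: the concrete formulas you propose for $\iota$ and $H$ are not the ones that can work, and the parts you defer as ``routine-but-delicate'' are exactly the content of the theorem. On the chain part you propose to send $\overline{a_1}\otimes\cdots\otimes\overline{a_p}\otimes a_{p+1}$ ``essentially by the identity'' into $C^{0,*}(A,\Omega_{\nc}^p(A))\cong(\sA)^{\otimes p}\otimes A$. This is not a chain map and cannot be repaired by signs and shifts alone: the Hochschild chain differential contains the wrap-around term $\pm\,\overline{a_2}\otimes\cdots\otimes\overline{a_p}\otimes a_{p+1}a_1$, while for an element $x\in C^{0,*}(A,\Omega_{\nc}^p(A))$ the external differential produces the element $\overline a\mapsto \pm\, a\blacktriangleright x \mp x\cdot a$ of $C^{1,*}(A,\Omega_{\nc}^p(A))$, which has no counterpart under the naive identity; moreover the degree bookkeeping of the shift $s^{1-k}$ is off by $k-1$. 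The map that actually works (Definition \ref{definitioniota}) raises the $\Omega$-weight by one and twists by the Casimir element, $\alpha\mapsto\sum_i\pm\,\overline{e_i}\otimes\overline{a_1}\otimes\cdots\otimes\overline{a_p}\otimes a_{p+1}f_i$, and it is precisely this insertion of $\sum_i e_i\otimes f_i$ that reproduces both the wrap-around term and the gluing map $\gamma(a)=\sum_i(-1)^{|f_i||a|}e_iaf_i$ (via Lemma \ref{lemma:bascipropertyeifi1}). Since $\Pi\circ\iota=\id$ must hold on the nose, your $\Pi$ (described only as ``using $\rho$ to convert the $\Omega_{\nc}^p$-valued data back into chains'') cannot be checked against an $\iota$ that is not pinned down; the actual $\Pi$ is an iterate of the maps $\pi_{m,p}$ which cap one input slot with $f_i$, apply the counit $\varepsilon$ to the first output slot and act by $e_i\blacktriangleright$, and its well-definedness on the colimit is the identity $\pi_{m+1,p+1}\circ\theta_{m,p}=\id$, which again uses the Frobenius data and is not formal.

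The homotopy is also misidentified. You describe $H$ as ``the standard contracting homotopy of the telescope,'' shifting $C^*(A,\Omega_{\nc}^p(A))$ to $C^*(A,\Omega_{\nc}^{p-1}(A))$; no such canonical contraction exists (the colimit is not being contracted onto its $p=0$ stage), and in fact the homotopy that works lowers the Hochschild arity $m$, not the weight $p$: $h_{m,p}\colon C^{m,*}(A,\Omega_{\nc}^p(A))\to C^{m-1,*}(A,\Omega_{\nc}^p(A))$ is again built from the Casimir element and counit, satisfies $\delta h+h\delta=\id-\theta\pi$ stage-wise, and $H$ is a telescoping sum of compositions $\theta\cdots\theta\, h\,\pi\cdots\pi$ compatible with the structure maps. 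Finally, your fallback induction on $p$ via the short exact sequence \eqref{shortexactsequence} and the bar contraction $\chi$ only yields connecting-map/long-exact-sequence information (i.e.\ at best that $\iota$ is a quasi-isomorphism, which one can also extract from \eqref{align:longexact}); a short exact sequence of bimodules does not give a chain-level splitting of $C^{m,*}(A,\Omega_{\nc}^p(A))$ up to a specified homotopy, so it does not produce the retract data $(\Pi,H)$ that the theorem asserts. To close the gap you need the explicit Frobenius-theoretic formulas for $\iota$, $\pi_{m,p}$ and $h_{m,p}$ and the verifications $\Pi\circ\iota=\id$, $\delta h+h\delta=\id-\theta\pi$, and compatibility with $\theta$, as carried out in the appendix.
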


Combining Theorem \ref{thm:homtopyretract} and the mapping cone construction \eqref{mappingcone}, we obtain a long exact sequence
\begin{align}\label{align:longexact}
\dotsb \to \HH_{i-k} (A, A) \to \HH^i(A, A) \to \HH^i_{\sg}(A, A) \to \HH_{i-k+1}(A, A)\to \dotsb.
\end{align}
where we identify $\HH_{\sg}^i(A, A)$ with ${\mathrm H}^i(\calD^*(A, A))$ via the natural  isomorphism $\mathrm{H}^i(\iota)$.

For the purposes of this paper, we only use the fact that the map $\iota$ is a quasi-isomorphism from the above theorem. We define $\iota$ in Definition \ref{definitioniota} below. However, for the sake of clarity and organization, we refer to the appendix at the end of the paper for the definition of $\Pi$ and $H$ and for the proof of Theorem \ref{thm:homtopyretract}. 


\begin{definition}
\label{definitioniota} Define $$\iota\colon \calD^*(A, A)\hookrightarrow \calC_{\sg}^*(A, A)$$ as follows. 
\begin{enumerate}
\item If $f\in C^n(A, A)$, define $\iota(f)=f \in C^n(A, A)\subset \calC_{\sg}^n(A, A)$.
\item If $\alpha=\overline{a_1}\otimes \cdots \otimes \overline{a_p}\otimes a_{p+1} \in C_{*}(A, A)$ define $$\iota(\alpha)\in \Hom_{\mathbb{K}}(\mathbb{K}, (\sA)^{\otimes p+1}\otimes A)\subset \calC^*_{\sg}(A, A)$$ by
$$\iota(\alpha)(1_{\mathbb K})=\sum_{i} (-1)^{ |f_i||\alpha| } \overline{e_i}\otimes \overline{a_1}\otimes \cdots \otimes \overline{a_p}\otimes a_{p+1}f_i. $$
\end{enumerate}
The map $\iota$ is clearly an injection. We refer to the appendix for the proof that $\iota $ is compatible with the differentials. 
\end{definition}

\subsection{Relating the Goresky-Hingston product and the singular Hochschild cup product}\label{subsection:4.2}

We now show that $\iota$ intertwines the products $\star$ and $\cup$. 

\begin{proposition}\label{prop:iotastar}
Let $\alpha= \overline{a_1} \otimes \dotsb \otimes \overline{a_p} \otimes a_{p+1}$ and $\beta= \overline{b_1} \otimes \dotsb \otimes \overline{b_q} \otimes b_{q+1}$ be two elements in $C_*(A,A)$. Then we have 
$$
\iota(\alpha) \cup \iota(\beta) = \iota(\alpha \star \beta).
$$
\end{proposition}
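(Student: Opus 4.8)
The plan is to verify the identity by direct computation on both sides, using the explicit formula for $\iota$ from Definition \ref{definitioniota}, the formula \eqref{cup} for the cup product in the case $m=n=0$, and the basic identities for the Casimir element from Lemma \ref{lemma:bascipropertyeifi} and Remark \ref{remark:usefulidentities}. First I would note that both $\iota(\alpha)$ and $\iota(\beta)$ lie in the subspace $\Hom_{\mathbb K}(\mathbb K, (\sA)^{\otimes *}\otimes A)$ of $\calC_{\sg}^*(A,A)$, so that the cup product $\iota(\alpha)\cup\iota(\beta)$ is computed by formula \eqref{cup} (this is exactly why that formula was recorded). Explicitly, writing $\iota(\alpha)(1_{\mathbb K}) = \sum_i (-1)^{|f_i||\alpha|}\overline{e_i}\otimes\overline{a_1}\otimes\cdots\otimes\overline{a_p}\otimes a_{p+1}f_i$ and similarly for $\iota(\beta)$ with indices $j$, I would expand $\iota(\alpha)\cup\iota(\beta)$ using \eqref{cup}. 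This produces a double sum over $i$ and $j$ of the form (up to signs) $\overline{e_j}\otimes\overline{b_1}\otimes\cdots\otimes\overline{b_q}\otimes\overline{b_{q+1}f_j}\otimes\overline{e_i}\otimes\overline{a_1}\otimes\cdots\otimes\overline{a_p}\otimes a_{p+1}f_i$, with a Koszul sign coming from $\iota$ on each factor plus the sign in \eqref{cup}.

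Next I would compute the right-hand side. By Definition \ref{definition:GH}, $\alpha\star\beta = \sum_\ell (-1)^{\eta_\ell}\overline{b_1}\otimes\cdots\otimes\overline{b_{q+1}e_\ell}\otimes\overline{a_1}\otimes\cdots\otimes\overline{a_p}\otimes a_{p+1}f_\ell$, and then applying $\iota$ to this element of $C_*(A,A)$ (which has $p+q+1$ bar entries) inserts one more Casimir pair at the front: $\iota(\alpha\star\beta)(1_{\mathbb K}) = \sum_{\ell,m}(-1)^{\epsilon}\overline{e_m}\otimes\overline{b_1}\otimes\cdots\otimes\overline{b_{q+1}e_\ell}\otimes\overline{a_1}\otimes\cdots\otimes\overline{a_p}\otimes a_{p+1}f_\ell f_m$, where $\epsilon$ combines $\eta_\ell$ with the sign $|f_m||\alpha\star\beta|$ from $\iota$. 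The key algebraic step is to reconcile the two expressions: on the left the two Casimir pairs $e_i\otimes f_i$ and $e_j\otimes f_j$ sit in "parallel" positions (one attached to the $a$-block, one attached to the $b$-block), whereas on the right they are "nested" ($e_\ell\otimes f_\ell$ is attached to the $b$-block and $e_m\otimes f_m$ surrounds everything, with $f_\ell f_m$ multiplied together at the end). I would use the bimodule identities of Lemma \ref{lemma:bascipropertyeifi}(4),(5) and Remark \ref{remark:usefulidentities} to move the factor $e_m$ through $\overline{b_1}\otimes\cdots\otimes\overline{b_{q+1}f_j}$ or, conversely, to combine $f_j$ and $f_i$ against a single Casimir pair, thereby transforming one normal form into the other; concretely the relation $\sum_{i}(-1)^{|e_i||x|}e_i x\otimes f_i = \sum_i (-1)^{|e_i||x|}e_i\otimes xf_i$ is what lets one "slide" a Casimir pair past an element, and identity (4) handles moving scalars past the pair.

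The main obstacle will be bookkeeping the signs: the sign $\eta_i$ in Definition \ref{definition:GH} involves terms like $|\alpha||f_i|$, $|b_{q+1}|$, and $(|\alpha|+k-1)(|\beta|+k-1)$, the sign in $\iota$ contributes $|f_i||\alpha|$-type terms that change because the degree of $\alpha\star\beta$ differs from that of $\alpha$ and $\beta$ by $k-1$, and the Koszul sign in \eqref{cup} contributes $|\iota(\alpha)|(|b_1|+\cdots+|b_q|-q)$-type terms, plus every time a Casimir pair is slid past a block via Remark \ref{remark:usefulidentities} one picks up $|e_i|(|x|+\cdots)$ signs. I expect these to cancel exactly once one carefully tracks that $|\iota(\alpha)| = |\alpha|+k$ (since $\iota$ raises total degree by $k$, because it appends a Casimir pair of total degree $k$), and uses the degree constraints forced by $\langle e_i,f_i\rangle$ being concentrated in complementary degrees. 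The strategy is therefore: (1) write out $\iota(\alpha)\cup\iota(\beta)$ via \eqref{cup}; (2) write out $\iota(\alpha\star\beta)$; (3) in one of them, use Lemma \ref{lemma:bascipropertyeifi} to merge/split Casimir pairs until the bar-tensor strings match; (4) check the resulting global sign agrees by a careful accounting modulo $2$. Since the problem is purely mechanical once set up, I would present only the decisive reorganization step (merging the two Casimir pairs) in detail and relegate the sign verification to a remark that it follows from the Koszul rule.
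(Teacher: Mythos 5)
Your overall strategy (expand both sides, then use the Casimir identities of Lemma \ref{lemma:bascipropertyeifi} and Remark \ref{remark:usefulidentities} to match normal forms and compare signs) is the same as the paper's, but your expansion of the left-hand side is wrong, and that is a step at which the argument as written fails. By Definition \ref{defn-cup}, in the case $m=n=0$ the last $A$-factor of the second argument is multiplied via $\mu$ into the last $A$-factor of the first; this is exactly what formula \eqref{cup} records. Applying it to $\iota(\alpha)(1_{\mathbb K})=\sum_i\pm\,\overline{e_i}\otimes\overline{a_1}\otimes\cdots\otimes\overline{a_p}\otimes a_{p+1}f_i$ and $\iota(\beta)(1_{\mathbb K})=\sum_j\pm\,\overline{e_j}\otimes\overline{b_1}\otimes\cdots\otimes\overline{b_q}\otimes b_{q+1}f_j$ yields terms $\overline{e_j}\otimes\overline{b_1}\otimes\cdots\otimes\overline{b_q}\otimes\overline{e_i}\otimes\overline{a_1}\otimes\cdots\otimes\overline{a_p}\otimes a_{p+1}f_i\,b_{q+1}f_j$, with exactly $p+q+2$ bar factors. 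Your claimed normal form keeps $\overline{b_{q+1}f_j}$ as an extra bar factor, giving $p+q+3$ factors; that expression does not even lie in $\Omega_{\nc}^{p+q+2}(A)$, so the planned reconciliation with $\iota(\alpha\star\beta)$ (which does have $p+q+2$ bar factors) cannot terminate in an equality.

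Once the cup product is expanded correctly, the reconciliation is also simpler than you anticipate: no merging or splitting of the two Casimir pairs is needed. Expanding $\iota(\alpha\star\beta)(1_{\mathbb K})$ gives $\sum_{i,j}\pm\,\overline{e_j}\otimes\overline{b_1}\otimes\cdots\otimes\overline{b_q}\otimes\overline{b_{q+1}e_i}\otimes\overline{a_1}\otimes\cdots\otimes\overline{a_p}\otimes a_{p+1}f_if_j$, and a single application of Remark \ref{remark:usefulidentities}, sliding $b_{q+1}$ across the inner pair $e_i\otimes f_i$, converts this into the cup-product normal form above. After that, the entire content of the proposition is the sign bookkeeping you propose to relegate to a remark: the statement is an equality of explicitly signed expressions ($\eta_i$ in Definition \ref{definition:GH}, the sign $|f_i||\alpha|$ in Definition \ref{definitioniota}, the Koszul sign in \eqref{cup}, and the sign produced by Remark \ref{remark:usefulidentities}), so checking that these agree modulo $2$ \emph{is} the proof and cannot be omitted. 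Note also that the appended Casimir pair raises the internal degree of $\iota(\alpha)(1_{\mathbb K})$ by $k-1$, not $k$, since $\overline{e_i}$ has degree $|e_i|-1$; an off-by-one of this kind is precisely what the deferred sign check has to catch.
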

\begin{proof}
By Definition \ref{definition:GH} we have 
\begin{align*}
\iota(\alpha \star \beta)(1_{\mathbb K})={} & \sum_i (-1)^{\eta_i} \iota\Big(\overline{b_1}\otimes \cdots \otimes \overline{b_{q+1}e_i}\otimes \overline{a_1}\otimes \cdots \otimes \overline{a_p} \otimes a_{p+1}f_i\Big)(1_{\mathbb K}) 
\\
={} & \sum_{i,j} (-1)^{\eta_i+\eta_j'} \overline{e_j} \otimes  \overline{b_1}\otimes \cdots \otimes \overline{b_q} \otimes \overline{b_{q+1}e_i}\otimes \overline{a_1}\otimes \cdots \otimes \overline{a_p} \otimes a_{p+1}f_if_j,
\end{align*}
where  
$ \eta_i =  |\alpha| |f_i| +|b_{q+1}| + (|\alpha|+k-1) (|\beta| +k-1)$ and $
\eta'_j  = |f_j| (|\alpha|+|\beta|+k-1)$. 
Using the identities in Remark \ref{remark:usefulidentities}, the above term equals
$$\sum_{i,j} (-1)^{\eta_i+\eta'_j + |b_{q+1}|(k + |\alpha|)}\overline{e_j} \otimes \overline{b_1}\otimes \cdots \otimes \overline{b_q} \otimes \overline{e_i}\otimes \overline{a_1}\otimes \cdots \otimes \overline{a_p} \otimes a_{p+1}f_i b_{q+1} f_j,$$
which is precisely $(\iota(\alpha) \cup \iota(\beta))(1_{\mathbb K})$ (cf. (\ref{cup})). 
\end{proof}

\begin{remark}\label{remark:iotaleibniz}
Note that Proposition \ref{prop:iotastar} yields  \eqref{leibniz-rule} since $\iota$ is an injection of complexes. 
\end{remark}

\subsection{Another mapping cone model for singular Hochschild cohomology}\label{E(A,A)}
We now define a new cochain complex $\mathcal{E}^*(A,A)$ associated to any dg algebra $A$ without requiring any Frobenius structure. 
This new complex $\mathcal{E}^*(A,A)$ satisfies the following properties: 
\begin{enumerate}
\item $\mathcal{E}^*(A,A)$ arises as the mapping cone of a natural map $A \otimes^{\mathbb{L}}_{A^e} A^! \to \text{RHom}_{A^e}(A,A)$ associated to any dg algebra $A$.
\item When $A$ is equipped with a dg Frobenius structure we factor the quasi-isomorphism $\iota\colon \mathcal{D}^*(A,A) \to C^*_{\sg}(A,A)$ (Definition \ref{definitioniota}) as a composition of quasi-isomorphisms $$\mathcal{D}^*(A,A) \xrightarrow{\tau} \mathcal{E}^*(A,A) \xrightarrow{\widetilde{\iota}} \calC_{\sg}^*(A,A).$$ 
\item $\mathcal{E}^*(A,A)$ enjoys better functoriality  properties than $\mathcal{D}^*(A,A)$ and we will use these in the proof of our main theorem. 
\end{enumerate}

Recall that the inverse dualizing complex $A^!=\mathrm{RHom}_{A^e}(A, A^e)$ can be modeled as the Hochschild cochain complex $C^*(A, A^e)$ equipped with the dg $A$-$A$-bimodule structure given by the inner $A$-$A$-bimodule action of $A \otimes A$. From now on we will denote $A^!=C^*(A,A^e)$. By the functoriality of $C^*(A, -)$, there is a natural morphism of complexes 
\begin{align}
\label{align:mu}
C^*(A, \mu) \colon A^! \twoheadrightarrow C^*(A, A) 
\end{align}
induced by the multiplication $\mu$ of $A$

Consider the Hochschild chain complex $C_*(A, A^!)$ of $A$ with coefficients in $A^!$. The above morphism \eqref{align:mu} induces a morphism of complexes 
\begin{align}\label{widehatgamma}
\widetilde \gamma \colon C_*(A, A^!) \to C^*(A, A)
\end{align}
given by 
\[
\widetilde \gamma(\alpha) = \begin{cases}
C^*(A, \mu)(\alpha) & \text{if $\alpha \in C_{0, *}(A, A^!) = A^!$}\\
0 & \text{otherwise.}
\end{cases}
\]

\begin{definition}
Given any dg algebra $A$, define the cochain complex $\calE^*(A, A)$ as the mapping cone of the morphism $\widetilde  \gamma$. 
\end{definition}
Similar to \eqref{align:longexact}, since $\mathcal{E}^*(A,A)$ is defined as a mapping cone, we have a long exact sequence
\begin{align}\label{align:exact2}
\dotsb \to \HH_i(A, A^!) \to \HH^i(A, A) \to \mathrm H^i(\calE^*(A, A)) \to \HH_{i+1}(A, A^!) \to \HH^{i+1}(A, A) \to \dotsb
\end{align}

The complex $\calE^*(A, A)$ may be viewed as a subcomplex of $\calC_{\sg}^*(A, A)$ as follows. Define a morphism of complexes 
\begin{align}\label{widetildeiota}
\widetilde\iota \colon \mathcal E^*(A, A) \hookrightarrow \calC_{\sg}^*(A, A), 
\end{align}
 which sends $\varphi \in C^*(A, A)$ to $\varphi \in C^*(A, A) \subset \calC_{\sg}^*(A, A)$ and sends  $\overline{a_1}\otimes \dotsb \otimes \overline{a_m} \otimes \varphi_n \in C_{-m, *}(A, A^!)$, where $\varphi_n \in C^{n, *}(A, A\otimes A)$,   to an elment in $C^{n, *}(A, \Omega_{\nc}^{m+1}(A)) \subset \mathcal \calC_{\sg}^*(A, A) $ given by  $$\overline{b_1} \otimes \dotsb\otimes \overline{b_n} \mapsto   \sum_j (-1)^{\epsilon_j} \overline{u_j} \otimes \overline{a_1} \otimes \dotsb \otimes \overline{a_m} \otimes v_j \in \Omega_{\nc}^{m+1}(A),$$ where we write $\varphi_n(\overline{b_1} \otimes \dotsb\otimes \overline{b_n}) = \sum_j u_j \otimes v_j \in A \otimes A$ and $\epsilon_j = |v_j|(|a_1|+\dotsb+|a_m|-m)$. Similar to the map $\iota$ in Definition \ref{definitioniota},  $\widetilde \iota$ is  an injective map  of complexes.

Let  $A$ be a dg Frobenius algebra of degree $k$. Recall from \eqref{align:CY} the natural quasi-isomorphism
$ \varrho_A \colon s^{-k} A \xrightarrow{\simeq} A^!$ of dg $A$-$A$-bimodules. There is an induced commutative square
\[
\xymatrix{
C_*(A, s^{-k}A) \ar[r]^-{\gamma} \ar[d]_{C_*(A, \varrho_A)} & C^*(A, A)\ar[d]^-{=} \\
C_*(A, A^!) \ar[r]^-{\widetilde \gamma} & C^*(A, A),
}
\] 
where $C_*(A, \varrho_A)$ is a quasi-isomorphism and $\gamma$ is given in \eqref{mappingcone}. This yields  a quasi-isomorphism between the mapping cones 
\[
\tau \colon \calD^*(A, A) \hookrightarrow \calE^*(A, A).
\]

\begin{proposition}\label{factor}
Let $A$ be a dg Frobenius algebra of degree $k$. Then 
the quasi-isomorphism $\iota\colon \mathcal{D}^*(A,A) \to C^*_{\sg}(A,A)$ in Definition \ref{definitioniota} is the composition of the morphisms $$\mathcal{D}^*(A,A) \xrightarrow{\tau} \mathcal{E}^*(A,A) \xrightarrow{\widetilde{\iota}} \calC_{\sg}^*(A,A).$$ 

As a result, $\widetilde{\iota}$ is a quasi-isomorphism and there is a commutative diagram between the long exact sequences \eqref{align:longexact} and \eqref{align:exact2}
\[
\xymatrix@R=1.5pc{
\dotsb \ar[r]&  \HH_{i-k}(A, A) \ar[r] \ar[d]^{\cong}_{\HH_i(A, \varrho_A)}&  \HH^i(A, A) \ar[r] \ar[d]^{=}& \HH_{\sg}^i(A, A) \ar[r] \ar[d]^{{\rm H}^i(\widetilde \iota)^{-1}}_{\cong} &  \HH_{i-k+1 }(A, A) \ar[d]_-{\cong}^{\HH_{i+1}(A, \varrho_A)}\ar[r] & \dotsb\\
\dotsb \ar[r]&  \HH_{i}(A, A^!) \ar[r] &  \HH^i(A, A) \ar[r] & \mathrm H^i(\calE^*(A, A)) \ar[r] & \HH_{i+1}(A, A) \ar[r] &  \dotsb
}
\]
\end{proposition}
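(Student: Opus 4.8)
The plan is to verify the asserted factorisation $\iota=\widetilde\iota\circ\tau$ by a direct comparison of formulas, and then to deduce the remaining statements formally from the mapping-cone constructions.

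\textbf{The factorisation.} First I would observe that $\iota$ and $\widetilde\iota\circ\tau$ are morphisms of complexes $\calD^*(A,A)\to\calC_{\sg}^*(A,A)$, so it is enough to compare them on the two summands $C^*(A,A)$ and $s^{1-k}C_*(A,A)$ of $\calD^*(A,A)$. On the Hochschild cochain summand all three of $\iota$, $\widetilde\iota$, $\tau$ restrict to the tautological inclusion $C^*(A,A)\hookrightarrow\calC_{\sg}^*(A,A)$, so there is nothing to check. For the chain summand, I would fix $\alpha=\overline{a_1}\otimes\dotsb\otimes\overline{a_p}\otimes a_{p+1}$, regarded as an element of $C_{-p,*}(A,s^{-k}A)$, and unwind $\tau$ using the explicit description of the Calabi--Yau quasi-isomorphism in Remark \ref{remark:CY}: the element $\varrho_A(a_{p+1})\in A^!=C^*(A,A^e)$ is concentrated in Hochschild-cochain degree $0$, where (setting $b=1$) it equals $\sum_i(-1)^{|e_i||a_{p+1}|}\,e_ia_{p+1}\otimes f_i\in A\otimes A$. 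Hence $\tau(\alpha)=\overline{a_1}\otimes\dotsb\otimes\overline{a_p}\otimes\varrho_A(a_{p+1})$, and feeding this into the defining formula \eqref{widetildeiota} of $\widetilde\iota$ yields, up to an explicit Koszul sign, $\sum_i\overline{e_ia_{p+1}}\otimes\overline{a_1}\otimes\dotsb\otimes\overline{a_p}\otimes f_i$; on the other hand $\iota(\alpha)(1_{\mathbb K})=\sum_i(-1)^{|f_i||\alpha|}\,\overline{e_i}\otimes\overline{a_1}\otimes\dotsb\otimes\overline{a_p}\otimes a_{p+1}f_i$. I would then identify these two expressions by sliding $a_{p+1}$ from the left leg of the Casimir element, past $\overline{a_1}\otimes\dotsb\otimes\overline{a_p}$, to the right leg --- that is, by item (5) of Lemma \ref{lemma:bascipropertyeifi} in the form recorded in Remark \ref{remark:usefulidentities}. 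Matching the two Koszul signs while keeping track of the $s^{-k}$-shift and the normalisation fixed in Definition \ref{definitioniota} is the one genuinely computational point, and I expect this sign bookkeeping to be the main obstacle; everything else is formal.

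\textbf{Consequences.} Granting $\iota=\widetilde\iota\circ\tau$: the map $\iota$ is a quasi-isomorphism by Theorem \ref{thm:homtopyretract}, and $\tau$ is a quasi-isomorphism, being the morphism of mapping cones induced by the commutative square displayed before the Proposition, whose left vertical $C_*(A,\varrho_A)$ is a quasi-isomorphism ($C_*(A,-)$ computes $A\otimes^{\mathbb{L}}_{A^e}(-)$, hence preserves quasi-isomorphisms of dg $A$-$A$-bimodules, and $\varrho_A$ is one by Remark \ref{remark:CY}) and whose right vertical is the identity. Two-out-of-three then forces $\widetilde\iota$ to be a quasi-isomorphism.

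\textbf{The diagram of long exact sequences.} That same commutative square \emph{is} the morphism $\tau$ between the mapping-cone triangles defining $\calD^*(A,A)$ and $\calE^*(A,A)$, so passing to cohomology produces a morphism from the long exact sequence \eqref{align:longexact} to \eqref{align:exact2} with vertical arrows: the isomorphism $\HH_i(A,\varrho_A)$ on the $\HH_*(A,-)$ terms (after the identification $\HH_i(A,s^{-k}A)=\HH_{i-k}(A,A)$), the identity on the $\HH^i(A,A)$ terms, and $\mathrm H^i(\tau)$ on the cone terms. Finally, using $\mathrm H^*(\iota)=\mathrm H^*(\widetilde\iota)\circ\mathrm H^*(\tau)$ and identifying $\HH_{\sg}^*(A,A)$ with $\mathrm H^*(\calD^*(A,A))$ via $\mathrm H^*(\iota)$ and with $\mathrm H^*(\calE^*(A,A))$ via $\mathrm H^*(\widetilde\iota)$, the cone-term vertical arrow becomes $\mathrm H^i(\widetilde\iota)^{-1}$, giving exactly the diagram in the statement.
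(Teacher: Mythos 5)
Your proposal is correct and follows essentially the same route as the paper: the factorisation $\iota=\widetilde\iota\circ\tau$ is checked by unwinding $\varrho_A$, $\tau$, $\widetilde\iota$ and applying Lemma \ref{lemma:bascipropertyeifi}(5) (the paper simply declares this part ``clear''), $\widetilde\iota$ is a quasi-isomorphism by two-out-of-three since $\iota$ and $\tau$ are, and the ladder of long exact sequences comes from $\tau$ being a morphism of mapping cones together with the identification of $\HH_{\sg}^*(A,A)$ via $\mathrm H^*(\iota)$. The only caveat is that, like the paper, you leave the final Koszul-sign bookkeeping in the factorisation implicit, but the identity you invoke is the right one and the remaining check is routine.
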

\begin{proof}The first part is clear. The cochain map $\widetilde{\iota}$ is a quasi-isomorphism since both $\iota$ and $\tau$ are quasi-isomorphisms. The commutativity of the diagram follows  since $\tau$ is a map between mapping cones. Note that in the diagram we identify $\HH_{\sg}^i(A, A)$ with $\mathrm{H}^i(\calD^*(A, A))$ via $\mathrm{H}^i(\iota)$. 
\end{proof}

We now discuss the functoriality properties of $\mathcal{E}^*(A,A)$ that will be used in the proof of our main theorem in Section \ref{section5}. 

Let $\varphi\colon A \to B$ be a morphism of dg algebras. Then $B^e$ may be viewed as an $A$-$A$-bimodule. We have a morphism of complexes induced by the multiplication $\mu_B$ of $B$
\[
C^*(A, \mu_B)\colon C^*(A, B^e) \to C^*(A, B).
\]
 We denote by $\calE^*(B, C^*(A, B^e))$ the mapping cone of the following natural map (cf. \eqref{widehatgamma})  $$ C_*(B, C^*(A, B^e)) \to C^*(A, B)$$ which sends $C_{0, *}(B, C^*(A, B^e)) = C^*(A, B^e)$ to $C^*(A, B)$ via the map $C^*(A, \mu_B)$ and sends $C_{i, *}(B, C^*(A, B^e))$ to zero for $i \neq 0$. Similar to \eqref{widetildeiota}, we have a morphism of complexes
\[
\widetilde \iota_{A, B} \colon \calE^*(B, C^*(A, B^e)) \to \calC_{\sg}^*(A, B).
\]

By functoriality,   $\varphi$ induces two morphisms 
\begin{equation}\label{algin:phiinducemap}
\begin{split}
\calE^*(A, A)  &\to \calE^*(B, C^*(A, B^e))\\
\calE^*(B, B)  & \to \calE^*(B, C^*(A, B^e)). 
\end{split}
\end{equation}

 \begin{proposition}\label{functoriality}
Any morphism $\varphi \colon A\to B$ of dg algebras induces a commutative diagram
\begin{align}\label{align:functorialityproperty}
\xymatrix{
\calE^*(A, A) \ar[r]\ar[d]_-{\widetilde\iota} & \calE^*(B, C^*(A, B^e)) \ar[d]^-{\widetilde \iota_{A, B}}& \calE^*(B, B) \ar[l]\ar[d]^-{\widetilde\iota} \\
\calC_{\sg}^*(A, A) \ar[r]_-{\calC_{\sg}^*(A, \varphi)}& 
 \calC_{\sg}^*(A, B)&  \calC_{\sg}^*(B, B)\ar[l]^-{\calC_{\sg}^*(\varphi, B)}
}
\end{align}
where the horizontal maps on the top are given in \eqref{algin:phiinducemap}. 
Furthermore, if $\varphi$ a quasi-isomorphism, then all the horizontal morphisms in \eqref{align:functorialityproperty} are quasi-isomorphisms.
\end{proposition}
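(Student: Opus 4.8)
The plan is to exploit that every object appearing in \eqref{align:functorialityproperty} other than the bottom row is a mapping cone, and to deduce both assertions from invariance of Hochschild (co)homology together with the five lemma. Recall that $\calE^*(A,A)$ is the cone of $\widetilde\gamma\colon C_*(A,A^!)\to C^*(A,A)$ with $A^!=C^*(A,A^e)$, that $\calE^*(B,B)$ is the cone of the analogous map $C_*(B,C^*(B,B^e))\to C^*(B,B)$, and that $\calE^*(B,C^*(A,B^e))$ is the cone of \eqref{widehatgamma}, namely $C_*(B,C^*(A,B^e))\to C^*(A,B)$. Under this description the two maps of \eqref{algin:phiinducemap} are, on the ``cokernel'' summands, the post-composition map $C^*(A,A)\to C^*(A,B)$ and the restriction map $C^*(B,B)\to C^*(A,B)$ along $\varphi$; and on the ``chain'' summands the maps $C_*(A,C^*(A,A^e))\to C_*(B,C^*(A,B^e))$ and $C_*(B,C^*(B,B^e))\to C_*(B,C^*(A,B^e))$ induced by $\varphi$, together with $\varphi\otimes\varphi\colon A^e\to B^e$ (resp.\ the restriction $C^*(B,B^e)\to C^*(A,B^e)$). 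In particular each horizontal map is a map of mapping cones and hence fits into a morphism of long exact sequences of the form \eqref{align:exact2}.

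I would first verify commutativity of the two squares by unwinding the definitions. On the $C^*$-summand of $\calE^*(A,A)$ both composites send $f\in C^*(A,A)$ to $\varphi\circ f\in C^*(A,B)\subset\calC_{\sg}^*(A,B)$, once one notes that on the $p=0$ part $C^*(A,\Omega_{\nc}^0(A))=C^*(A,A)$ the map $\calC_{\sg}^*(A,\varphi)$ is precisely $C^*(A,\Omega_{\nc}^0(\varphi))$; the $C^*$-summand of $\calE^*(B,B)$ is handled identically with $\calC_{\sg}^*(\varphi,B)$ restricting to $C^*(\varphi,B)$. On the chain summand, take $\overline{a_1}\otimes\cdots\otimes\overline{a_m}\otimes\varphi_n\in C_{-m,*}(A,A^!)$ with $\varphi_n(\overline{b_1}\otimes\cdots\otimes\overline{b_n})=\sum_j u_j\otimes v_j$; applying the map $\widetilde\iota$ of \eqref{widetildeiota} and then $\calC_{\sg}^*(A,\varphi)$ yields $\sum_j(-1)^{\epsilon_j}\overline{\varphi(u_j)}\otimes\overline{\varphi(a_1)}\otimes\cdots\otimes\overline{\varphi(a_m)}\otimes\varphi(v_j)$ with $\epsilon_j=|v_j|(|a_1|+\cdots+|a_m|-m)$, while applying the induced map of \eqref{algin:phiinducemap} first (which applies $\varphi$ to the $a_i$ and $\varphi\otimes\varphi$ to the values of $\varphi_n$) and then $\widetilde\iota_{A,B}$ gives the same element, since $\varphi$ has degree zero. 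The right square is checked the same way with $\varphi$ acting by restriction. This is entirely a direct unwinding of the definitions.

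For the quasi-isomorphism claim I would run the five lemma on the morphisms of long exact sequences \eqref{align:exact2} attached to the two top horizontal maps: it is enough to check that each induces an isomorphism on the two ``ends'' of the cone. In the cohomology direction, $\HH^*(A,A)\to\HH^*(A,B)$ is induced by the quasi-isomorphism $\varphi\colon A\to B$ of dg $A$-$A$-bimodules and $\HH^*(B,B)\to\HH^*(A,B)$ is the restriction along $\varphi$; both are isomorphisms by the usual quasi-isomorphism invariance of Hochschild cohomology. In the homology direction, for the left map I would factor $C_*(A,C^*(A,A^e))\to C_*(B,C^*(A,B^e))$ as $C_*(A,-)$ applied to the map $C^*(A,\varphi\otimes\varphi)\colon C^*(A,A^e)\to C^*(A,B^e)$ (a quasi-isomorphism of dg $A$-$A$-bimodules, because $\varphi\otimes\varphi\colon A^e\to B^e$ is a quasi-isomorphism over $\mathbb{K}$), followed by the change-of-algebra map $C_*(A,M)\to C_*(B,M)$ for the dg $B$-$B$-bimodule $M=C^*(A,B^e)$; for the right map only the coefficient change $C_*(B,C^*(B,B^e))\to C_*(B,C^*(A,B^e))$ along the restriction $C^*(B,B^e)\to C^*(A,B^e)$ is needed. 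Since the two bottom horizontal maps are already known to be quasi-isomorphisms (the Lemma preceding Proposition \ref{proposition-7.4}), this establishes the full claim.

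The step I expect to be the main obstacle is confirming that these change-of-algebra and restriction maps, together with $C^*(A,\varphi\otimes\varphi)$, are genuinely quasi-isomorphisms: the coefficient complexes $C^*(A,B^e)$ are built from products and are a priori unbounded, so one cannot simply invoke the naive exactness of $\Hom_{\mathbb K}(V,-)$ or of $V\otimes-$. This is handled exactly as in the proof that the zig-zag \eqref{align:zig-zagsg} of singular Hochschild cochain complexes consists of quasi-isomorphisms: one filters by the number of bar tensor factors $(\sA)^{\otimes m}$, identifies the associated graded maps as $(\sA)^{\otimes m}\otimes M\to(s\overline B)^{\otimes m}\otimes M$ (resp.\ $\Hom_{\mathbb K}((s\overline B)^{\otimes m},M)\to\Hom_{\mathbb K}((\sA)^{\otimes m},M)$), which are quasi-isomorphisms since $\overline A\xrightarrow{\simeq}\overline B$, and invokes convergence of the resulting spectral sequence, valid under the standing hypothesis that $A$ and $B$ are simply connected and non-negatively graded. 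Combining these component-wise isomorphisms with the five lemma yields that both top horizontal maps in \eqref{align:functorialityproperty}, and hence all the horizontal maps, are quasi-isomorphisms when $\varphi$ is.
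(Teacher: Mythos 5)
Your proposal is essentially the argument the paper intends (the paper states Proposition \ref{functoriality} without proof, treating it as a routine functoriality check): commutativity of \eqref{align:functorialityproperty} by unwinding the definitions of $\widetilde\iota$, $\widetilde\iota_{A,B}$ and the maps \eqref{algin:phiinducemap}, and the quasi-isomorphism claim by viewing the top horizontal maps as maps of mapping cones and checking that both constituent maps of Hochschild chain/cochain complexes are quasi-isomorphisms, exactly as in the lemma preceding Proposition \ref{proposition-7.4}. Your identification of the induced maps on the two cone summands, the factorization of $C_*(A,A^!)\to C_*(B,C^*(A,B^e))$ through a coefficient change followed by a change of algebra, and the use of the five lemma on the long exact sequences \eqref{align:exact2} are all correct.

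One point should be repaired: you justify convergence of the comparison spectral sequences by appealing to ``the standing hypothesis that $A$ and $B$ are simply connected and non-negatively graded.'' There is no such hypothesis here -- Section 4.3 and Proposition \ref{functoriality} are stated for arbitrary dg algebras, and in the application (Corollary \ref{corollary:longexactsequence}) the proposition must be applied to the intermediate terms of the zig-zag, which are not assumed simply connected or non-negatively graded. Fortunately the restriction is also unnecessary: on the chain side the filtration by the number of bar factors is exhaustive and bounded below in filtration degree, so the spectral sequence converges with no hypotheses on $A$, $B$; on the cochain side (needed for $C^*(A,\varphi\otimes\varphi)\colon C^*(A,A^e)\to C^*(A,B^e)$, for the restriction $C^*(B,B^e)\to C^*(A,B^e)$, and for $C^*(A,A)\to C^*(A,B)\leftarrow C^*(B,B)$) one argues exactly as the paper does for the zig-zag \eqref{equation-zig-zag}: the decreasing filtrations are complete and exhaustive, the associated graded maps are quasi-isomorphisms since $\overline{A}\to\overline{B}$ and $\varphi\otimes\varphi$ are quasi-isomorphisms over the field $\mathbb{K}$, and the comparison theorem for complete filtered complexes applies. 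With that substitution your proof establishes the proposition in the stated generality.
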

Finally, we relate the long exact sequences associated to quasi-isomorphic dg Frobenius algebras.

\begin{corollary}\label{corollary:longexactsequence}
Let $(A, \langle -, -\rangle_A)$ and $(B, \langle -, -\rangle_B)$ be two dg Frobenius algebras of degree $k$. 
Suppose that there is a zig-zag of quasi-isomorphisms of dg algebras $$A \xleftarrow{\simeq} \bullet \xrightarrow{\simeq} \dotsb \xleftarrow{\simeq} \bullet \xrightarrow{\simeq} B.$$ Then there is a commutative diagram between long exact sequences 
\[
\xymatrix@R=1.5pc{
\dotsb \ar[r]&  \HH_{i-k}(A, A) \ar[r] \ar[d]^{\cong}_{\HH_i(A, \varrho_A)}&  \HH^i(A, A) \ar[r] \ar[d]^{=}& \HH_{\sg}^i(A, A)\ar[r] \ar[d]^{{\rm H}^i(\widetilde\tau)^{-1}}_{\cong} &  \HH_{i-k+1 }(A, A) \ar[d]_-{\cong}^{\HH_{i+1}(A, \varrho_A)}\ar[r] & \dotsb\\
\dotsb \ar[r]&  \HH_{i}(A, A^!) \ar[r] \ar[d]^{\cong}&  \HH^i(A, A) \ar[r] \ar[d]^{\cong}& \mathrm H^i(\calE^*(A, A)) \ar[r] \ar[d]^{\cong}& \HH_{i+1}(A, A^!)\ar[d]^-{\cong} \ar[r] &\dotsb\\
\dotsb \ar[r]&  \HH_{i}(B, B^!) \ar[r]&  \HH^i(B, B) \ar[r] & \mathrm H^i(\calE^*(B, B))  \ar[r]   &\HH_{i+1}(B, B^!) \ar[r] &\dotsb\\
\dotsb \ar[r]&  \HH_{i-k}(B, B)\ar[u]_-{\cong}^-{\HH_i(B, \varrho_B)} \ar[r]&  \HH^i(B, B) \ar[r] \ar[u]^-{=} &\HH_{\sg}^i(B, B) \ar[u]_-{{\rm H}^i(\widetilde \tau)^{-1}}^-{\cong} \ar[r] &  \HH_{i-k+1 }(B, B) \ar[u]^-{\cong}_-{\HH_{i+1}(B, \varrho_B)}\ar[r] &\dotsb
}
\]
where the vertical isomorphisms in the middle row are induced by the zig-zag of maps in \eqref{algin:phiinducemap}, and the vertical isomorphisms in the top and bottom rows are given in Proposition \ref{factor}. 

Moreover, the isomorphism between $\HH_{\sg}^i(A, A)$ and $\HH_{\sg}^i(B, B)$ in the diagram coincides with the isomorphism induced by the zig-zag \eqref{align:zig-zagsg}. 
\end{corollary}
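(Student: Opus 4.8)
\section*{Proof proposal for Corollary \ref{corollary:longexactsequence}}

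The plan is to assemble the four-row diagram out of three pieces, each of which is already essentially available, and then to check the last assertion by a diagram chase. Proposition \ref{factor} provides, for any dg Frobenius algebra, a vertical isomorphism from the long exact sequence \eqref{align:longexact} to the long exact sequence \eqref{align:exact2}; applying it to $A$ produces the top pair of rows, and applying it to $B$ (read with the vertical arrows reversed) produces the bottom pair. Thus it remains to connect the two copies of \eqref{align:exact2}, i.e.\ the two middle rows, across the zig-zag.

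The key point for the middle step is that the complex $\calE^*(-,-)$, the morphism $\widetilde\gamma$ of \eqref{widehatgamma}, and hence the long exact sequence \eqref{align:exact2}, are defined for an arbitrary dg algebra, with no Frobenius structure needed; so they are functorial along the whole zig-zag, even though the intermediate algebras need not be Frobenius. Write the zig-zag as $A = A_0 \xleftarrow{\simeq} A_1 \xrightarrow{\simeq} \dotsb \xrightarrow{\simeq} A_n = B$. By Proposition \ref{functoriality}, each arrow $\varphi \colon A_i \to A_j$ between consecutive terms yields a zig-zag of quasi-isomorphisms
$$\calE^*(A_i, A_i) \xrightarrow{\simeq} \calE^*(A_j, C^*(A_i, A_j^e)) \xleftarrow{\simeq} \calE^*(A_j, A_j),$$
and, since the maps in \eqref{algin:phiinducemap} and the map $\widetilde\iota_{A,B}$ are maps of mapping cones — being induced by compatible morphisms of the cone data $C_*(-,-^!) \to C^*(-,-)$, and since $\calE^*(A_j, C^*(A_i, A_j^e))$ is itself a mapping cone — each of them induces a morphism of the corresponding long exact sequences of the form \eqref{align:exact2}. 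Moreover, by \eqref{align:functorialityproperty} these morphisms sit over the corresponding squares of singular Hochschild complexes. Gluing the per-arrow zig-zags at their common endpoints $\calE^*(A_i, A_i)$ and passing to cohomology produces the required isomorphism between the long exact sequences \eqref{align:exact2} for $A$ and for $B$, together with a compatible isomorphism of the mapping-cone triangles of $\calC_{\sg}^*(A,A)$ and $\calC_{\sg}^*(B,B)$.

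To identify the resulting isomorphism $\HH_{\sg}^i(A,A) \cong \HH_{\sg}^i(B,B)$ with the one coming from \eqref{align:zig-zagsg}, observe that the latter is, by definition, the composite over the arrows of the isomorphisms $\mathrm{H}^i(\calC_{\sg}^*(A_i, \varphi))$ and $\mathrm{H}^i(\calC_{\sg}^*(\varphi, A_j))^{-1}$ (all isomorphisms, since each $\varphi$ is a quasi-isomorphism), while the former, after identifying $\HH_{\sg}^i$ with $\mathrm{H}^i(\calE^*)$ via $\mathrm{H}^i(\widetilde\iota)$ (using $\iota = \widetilde\iota \circ \tau$ from Proposition \ref{factor}), is the composite of the $\calE$-level maps built in the previous step. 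The commutativity of each square of \eqref{align:functorialityproperty}, together with the fact that the horizontal maps of \eqref{align:functorialityproperty} and the maps $\calC_{\sg}^*(A_i,\varphi)$, $\calC_{\sg}^*(\varphi,A_j)$ are all isomorphisms on cohomology, yields at once that $\mathrm{H}^i(\widetilde\iota)$ intertwines the $\calE$-level composite with the composite \eqref{align:zig-zagsg}; this is the asserted coincidence. Note that this chase uses only that the \emph{horizontal} maps are quasi-isomorphisms and that $\widetilde\iota$ is a quasi-isomorphism at the two Frobenius ends $A$ and $B$ (Proposition \ref{factor}); no Frobenius hypothesis on the intermediate $A_i$ is required.

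The main obstacle is organizational rather than conceptual: one must check carefully that the functorial maps \eqref{algin:phiinducemap} and $\widetilde\iota_{A,B}$ are genuinely compatible with the mapping-cone structures — so that they induce morphisms of the long exact sequences \eqref{align:exact2} and of the connecting homomorphisms — and that the gluing of the per-arrow zig-zags at their common endpoints is legitimate; but all of this is built into the construction of $\calE^*(-,-)$ as a mapping cone and the naturality of $\widetilde\iota$ and $\widetilde\iota_{A,B}$, so no computation beyond tracking indices and signs is needed.
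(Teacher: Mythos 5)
Your proposal is correct and follows essentially the same route as the paper: the top and bottom pairs of rows come from Proposition \ref{factor}, the middle comparison comes from the mapping-cone functoriality of $\calE^*$ along the zig-zag via \eqref{algin:phiinducemap}, and the identification with the isomorphism from \eqref{align:zig-zagsg} follows from the commutative diagram \eqref{align:functorialityproperty} of Proposition \ref{functoriality}. Your explicit remarks that the intermediate algebras need not be Frobenius and that $\widetilde\iota$ only needs to be a quasi-isomorphism at the two ends simply spell out what the paper's brief proof leaves implicit.
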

\begin{proof}
The commutativity of the squares in the top and bottom rows follows from Proposition \ref{factor}.
This commutativity of the squares in the middle row directly follows from the fact that $\calE^*$ is defined as mapping cones.  The last statement follows from Proposition \ref{functoriality}. \end{proof}


\section{Proof of the main theorem}\label{section5}
We will now prove that, for simply connected dg Frobenius algebras,  the isomorphism class of the Goresky-Hingston algebra is invariant under quasi-isomorphisms. To conclude this, we use the invariance of the singular Hochschild cohomology algebra with respect to quasi-isomorphisms together with the quasi-isomorphism relating the Tate-Hochschild complex and the singular Hochschild cochain complex described in the previous section.

\subsection{Tate-Hochschild cohomology in the simply connected case}
For simply connected dg Frobenius algebras, Tate-Hochschild cohomology may be described in terms of Hochschild homology and cohomology as follows.
\begin{proposition}
\label{proposition4.7}
Let $A$ be a simply connected dg Frobenius algebra of degree $k$. Then 
\begin{enumerate}
\item if the Euler characteristic  $\chi(A):= \mu \circ \Delta(1) \neq 0$, we have vector space isomorphisms
\begin{equation*}
\HH_{\sg}^i(A, A) \cong \mathrm H^i(\calD^*(A,A)) \cong
\begin{cases}
\HH^i(A, A)& \emph{if $i\leq k-1$,}\\
\HH_{i-k+1}(A, A)  & \emph{if $i\geq k$;}
\end{cases}
\end{equation*}
\item if $\chi(A):= \mu \circ   \Delta(1)=0$, we have vector space isomorphisms
\begin{equation*}
\HH_{\sg}^i(A, A) \cong \mathrm H^i(\calD^*(A,A)) \cong
\begin{cases}
\HH^i(A, A)& \emph{if $i<k-1$,}\\
\HH^{k-1}(A, A)\oplus \HH_0(A, A) & \emph{if $i=k-1$,}\\
\HH_1(A, A)\oplus \HH^k(A, A) & \emph{if $i=k$,}\\
\HH_{i-k+1}(A, A)  & \emph{if $i>k$.}
\end{cases}
\end{equation*}
\end{enumerate}
\end{proposition}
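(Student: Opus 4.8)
The plan is to read both formulas directly off the mapping-cone long exact sequence \eqref{align:longexact}. Via the quasi-isomorphism $\iota$ of Theorem \ref{thm:homtopyretract} we identify $\HH_{\sg}^i(A,A)$ with $\mathrm H^i(\calD^*(A,A))$, so \eqref{align:longexact} is the long exact sequence of the mapping cone \eqref{mappingcone} of $\gamma\colon s^{-k}C_*(A,A)\to C^*(A,A)$ (Definition \ref{definition:TH}). Write $c_j\colon\HH_j(A,A)\to\HH^{j+k}(A,A)$ for its connecting homomorphism; it is $\mathrm H^{j+k}(\gamma)$. The whole statement then reduces to four facts, all forced by the hypothesis that $A$ is \emph{simply connected}.

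These facts are: \emph{(a)} $\HH_j(A,A)=0$ for $j<0$, and $\HH_0(A,A)\cong\mathbb{K}$ is generated by the class of $1\in A^0$ (Remark \ref{remark-simply}); \emph{(b)} $\HH^i(A,A)=0$ for $i>k$, and $\HH^k(A,A)=C^k(A,A)=A^k\cong\mathbb{K}$; \emph{(c)} $c_j=0$ for every $j\geq 1$; and \emph{(d)} $c_0$ sends the generator $[1]$ to $[\gamma(1)]=[\sum_i e_if_i]=\chi(A)\in A^k=\HH^k(A,A)$, hence is an isomorphism $\mathbb{K}\xrightarrow{\sim}\mathbb{K}$ when $\chi(A)\neq 0$ and the zero map when $\chi(A)=0$. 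I would prove (b) by a degree count in $C^*(A,A)$: since $A^1=0$, every tensor factor of $(s\overline{A})^{\otimes m}$ lies in degrees $\geq 1$, and as $A$ is concentrated in degrees $[0,k]$ this forces $C^{m,i}(A,A)=0$ for $i>k$ and also for $i=k$ with $m\geq 1$; thus $C^i(A,A)=0$ for $i>k$ and $C^k(A,A)=C^{0,k}(A,A)=A^k$, which is one-dimensional because the pairing restricts to a perfect pairing $A^k\otimes A^0\to\mathbb{K}$ with $A^0\cong\mathbb{K}$. The only contribution to $\delta\colon C^{k-1}(A,A)\to C^k(A,A)=A^k$ is then the internal differential $d\colon A^{k-1}\to A^k$ on the length-zero part; but $\langle d(b),1\rangle=-(-1)^{|b|}\langle b,d(1)\rangle=0$, so perfectness of $A^k\otimes A^0\to\mathbb{K}$ gives $d(A^{k-1})=0$ and hence $\HH^k(A,A)=A^k$. (Alternatively, (b) is the Poincaré-duality isomorphism $\HH^i(A,A)\cong\HH_{k-i}(A,A)^{\vee}$ afforded by the Frobenius structure, combined with (a).) Then (c) is immediate since the target $\HH^{j+k}(A,A)$ vanishes for $j\geq 1$ by (b), and (d) is the identity $\gamma(1)=\sum_i(-1)^{|f_i|\cdot 0}e_i\cdot 1\cdot f_i=\sum_i e_if_i=\chi(A)$ together with the identification in (b).

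Granting (a)--(d), I substitute into \eqref{align:longexact} degree by degree. For $i<k-1$ both $\HH_{i-k}(A,A)$ and $\HH_{i-k+1}(A,A)$ vanish by (a), so $\HH_{\sg}^i(A,A)\cong\HH^i(A,A)$; for $i>k$ both $\HH^i(A,A)$ and $\HH^{i+1}(A,A)$ vanish by (b), so $\HH_{\sg}^i(A,A)\cong\HH_{i-k+1}(A,A)$. For $i=k-1$, using $\HH_{-1}(A,A)=0$ we obtain an exact sequence $0\to\HH^{k-1}(A,A)\to\HH_{\sg}^{k-1}(A,A)\to\ker(c_0)\to 0$; for $i=k$, using $\HH^{k+1}(A,A)=0$ (so $c_1=0$ and $\HH_{\sg}^k(A,A)\to\HH_1(A,A)$ is onto), an exact sequence $0\to\coker(c_0)\to\HH_{\sg}^{k}(A,A)\to\HH_1(A,A)\to 0$; both split over $\mathbb{K}$. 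If $\chi(A)\neq 0$ then $\ker(c_0)=\coker(c_0)=0$ by (d), which produces case (1); if $\chi(A)=0$ then $\ker(c_0)=\HH_0(A,A)\cong\mathbb{K}$ and $\coker(c_0)=\HH^k(A,A)\cong\mathbb{K}$ by (d), which produces case (2).

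The only step requiring real care is fact (b) — locating $\HH^i(A,A)$ for $i\geq k$. It is essentially degree bookkeeping in the Hochschild cochain complex, with the hypothesis $A^1=0$ being exactly what collapses the relevant cochains to bar length $0$, followed by a one-line computation with the Frobenius pairing to kill $d$ on $A^{k-1}$. Everything afterwards is formal manipulation of the long exact sequence and of the explicit connecting map.
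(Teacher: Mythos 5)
Your proposal is correct and follows essentially the same route as the paper: identify $\HH_{\sg}^*(A,A)$ with $\mathrm H^*(\calD^*(A,A))$ via $\iota$, feed the degree vanishing forced by simple connectivity ($\HH_{<0}(A,A)=0$, $\HH_0(A,A)\cong A^0$, $\HH^{>k}(A,A)=0$, $\HH^k(A,A)\cong A^k$) into the mapping-cone long exact sequence \eqref{align:longexact}, and observe that the connecting map $A^0\to A^k$ is multiplication by $\chi(A)$, hence an isomorphism or zero according to the two cases. The only difference is that you spell out the verification that $d(A^{k-1})=0$ (so $\HH^k(A,A)\cong A^k$), a point the paper leaves implicit.
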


\begin{proof}
The isomorphism $\HH_{\sg}^i(A, A)   \cong \mathrm H^i(\calD^*(A,A))$ is induced by the homotopy retract (\ref{equation-homotopy-transfer}). 

Let us  use the long exact sequence \eqref{align:longexact} to calculate $\mathrm H^i(\calD^*(A,A))$ in terms of Hochschild homology and cohomology. Since $A$ is simply connected, we have $ C^{i}(A, A) = 0$ if  $i> k$ and $C_i(A, A) = 0$ if $i< 0$ by degree reasons. It follows that 
\begin{flalign*}
&&\HH^i(A, A) =\begin{cases}
 0 & \text{if $i>k$}\\ 
A^k\cong \mathbb K& \text{if $i =k$}  
 \end{cases} && \text{and}&& 
 \HH_i(A, A) =\begin{cases}
 0 & \text{if $i<0$}\\ 
A^0 \cong \mathbb K& \text{if $i =0$}.  
 \end{cases} &&
\end{flalign*}
Then by the long exact sequence \eqref{align:longexact} we obtain   
\begin{align*}
\HH_{\sg}^i(A, A) \cong \begin{cases}
 \HH^i(A, A) & \text{if $i<k-1$}\\ 
\HH_{i-k+1}(A, A) & \text{if $i >k$}  
 \end{cases} 
\end{align*}
and   an exact sequence involving $\HH_{\sg}^{k-1}(A, A)$ and $\HH_{\sg}^{k}(A, A)$ 
\begin{align}\label{align:exact}
0 \to \HH^{k-1}(A, A) \to \HH_{\sg}^{k-1}(A, A) \to A^0 \xrightarrow{\gamma} A^k \to \HH_{\sg}^k(A, A) \to \HH_{1}(A, A) \to  0. 
\end{align}

If $\chi(A)  \neq 0$ then the differential $\gamma$ in \eqref{align:exact} is an isomorphism. Hence, we have $$
 \HH_{\sg}^{k-1}(A, A)\cong \HH^{k-1}(A, A) \quad \text{and} \quad \HH_{\sg}^k(A, A)  \cong \HH_1(A, A) .$$
 If $\chi(A)=0$ then the differential $\gamma$ is zero, so we have 
$$
\HH_{\sg}^{k-1}(A, A)\cong  \HH^{k-1}(A, A)\oplus \HH_0(A, A)  \ \text{and} \ \HH_{\sg}^k(A, A) \cong \HH_1(A, A)\oplus \HH^k(A, A).
$$ 
\end{proof}

\begin{remark} 

It follows from  Remark \ref{remark-simply} and Proposition \ref{proposition4.7} that the (shifted)  reduced Hochschild homology  $s^{1-k}\overline{\HH}_*(A,A)$ is a graded subspace of $\HH_{\sg}^*(A, A)$.  Theorem \ref{theorem-m} below shows that it is actually a graded subalgebra of $\HH_{\sg}^*(A, A)$. 
\end{remark}
\begin{remark} Proposition \ref{proposition4.7} should be compared with the computation of Rabinowitz-Floer homology in Theorem 1.10 of \cite{CiFrOa}. In fact, when $M$ is a simply connected oriented closed manifold of dimension $k$, Proposition \ref{proposition4.7} implies that the singular Hochschild cohomology of the dg algebra of cochains on $M$ with real coefficients is isomorphic to the Rabinowitz-Floer homology of the unit cotangent bundle of $M$. The long exact sequence  \eqref{align:longexact} should be compared with the long exact sequence  in Theorem 1.2 of \cite{CiFrOa}.
\end{remark}
Recall that $\mathbf{DGA}^1_{\mathbb{K}}$ is the category of unital dg $\mathbb{K}$-algebras $A$ which are  simply connected and non-negatively graded (i.e.\ $A^{< 0} = 0, 
 \ A^0 \cong \mathbb{K}$ and $A^1=0$). Our main result, Theorem \ref{main} in the introduction, can now be proved by considering two cases: when the Euler characteristic is zero and when it is non-zero.

\begin{theorem}
\label{theorem-m}
Let $(A, \langle -, -\rangle_A)$ and $(B, \langle -, -\rangle_B)$ be two dg Frobenius algebras of degree $k$ such that $A,B \in \mathbf{DGA}^1_{\mathbb{K}}$. Suppose that there is a zig-zag of quasi-isomorphisms of dg algebras $$A \xleftarrow{\simeq} \bullet \xrightarrow{\simeq} \dotsb \xleftarrow{\simeq} \bullet \xrightarrow{\simeq} B.$$  
\begin{enumerate} 
\item
 If  $\chi(A) \neq 0$, then $\chi(B) \neq 0$ and  the composition of isomorphisms 
$$s^{1-k}\overline{\HH}_*(A,A)  \underset{\mathrm H^*(\iota)}{\xrightarrow{\cong}}  \HH^{*\geq k}_{\sg}(A,A)  \cong \HH^{*\geq k}_{\sg}(B,B) \underset{\mathrm H^*(\iota)^{-1}}{\xrightarrow{\cong}}  s^{1-k}\overline{\HH}_*(B,B)$$
preserves the Goresky-Hingston algebra structures.

\item If $\chi(A)=0$, then $\chi(B) =0$ and the composition of isomorphisms $$s^{1-k}\overline{\HH}_*(A,A) \oplus\HH^k(A,A) \underset{\mathrm H^*(\iota)}{\xrightarrow{\cong}} \HH^{*\geq k}_{\sg}(A,A)  \cong \HH^{*\geq k}_{\sg}(B,B) \underset{\mathrm H^*(\iota)^{-1}}{\xrightarrow{\cong}} s^{1-k}\overline{\HH}_*(B,B) \oplus\HH^k(B,B)$$ restricts to an algebra  isomorphism $$s^{1-k}\overline{\HH}_*(A,A) \cong s^{1-k}\overline{\HH}_*(B,B).$$ 
\end{enumerate}
\end{theorem}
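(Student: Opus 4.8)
The plan is to leverage everything set up in Section \ref{section4}, so that the proof reduces to tracking how the Goresky-Hingston subalgebra sits inside $\HH_{\sg}^*$ and showing this inclusion is preserved under the isomorphism induced by the zig-zag. First I would observe that, since $A, B \in \mathbf{DGA}^1_{\mathbb K}$ are quasi-isomorphic through a zig-zag of dg algebras, their Hochschild homologies are isomorphic as graded vector spaces; combined with the identification $\HH_0(A,A) \cong A^0 \cong \mathbb{K}$ from Remark \ref{remark-simply}, this already forces $\dim \HH_1(A,A) = \dim \HH_1(B,B)$. The dichotomy $\chi(A) \neq 0$ versus $\chi(A)=0$ can be detected purely from the dimension of $\HH_{\sg}^{k}(A,A)$ via the exact sequence \eqref{align:exact}: when $\chi(A)\neq 0$ one has $\HH_{\sg}^k(A,A) \cong \HH_1(A,A)$, while when $\chi(A)=0$ one gets an extra summand $\HH^k(A,A) \cong \mathbb{K}$. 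Since $\HH_{\sg}^*$ is a quasi-isomorphism invariant by Proposition \ref{proposition-7.4}, and all the other terms appearing ($\HH^*(A,A)$, $\HH_*(A,A)$) are quasi-isomorphism invariants with matching dimensions, the vanishing or non-vanishing of $\chi$ is forced to agree for $A$ and $B$. This disposes of the first assertion in each case ($\chi(A) \neq 0 \Rightarrow \chi(B) \neq 0$, and likewise for $\chi = 0$).

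Next I would establish the algebra-level statement. By Remark \ref{remark-simply}, $\overline{\HH}_i(A,A) = \HH_i(A,A)$ for $i>0$ and vanishes otherwise, so $s^{1-k}\overline{\HH}_*(A,A)$ is concentrated in degrees $\geq k$ (degree $i$ reduced Hochschild homology sits in cohomological degree $i+k-1 \geq k$). Proposition \ref{proposition4.7} identifies $\HH_{\sg}^{*\geq k}(A,A)$ with $s^{1-k}\overline{\HH}_*(A,A)$ when $\chi(A)\neq 0$, and with $s^{1-k}\overline{\HH}_*(A,A) \oplus \HH^k(A,A)$ when $\chi(A)=0$ (the extra $\HH^k(A,A) \cong \mathbb K$ sitting in degree $k$). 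The identification is realized by $\mathrm H^*(\iota)$, and by Proposition \ref{prop:iotastar}, $\iota$ intertwines $\star$ with $\cup$; hence $s^{1-k}\overline{\HH}_*(A,A)$ is a graded \emph{subalgebra} of $(\HH_{\sg}^*(A,A), \cup)$, which is the content flagged just before the theorem statement. In the $\chi=0$ case I must also check that $s^{1-k}\overline{\HH}_*(A,A)$ is a subalgebra complementary to the extra summand: the extra class lives in cohomological degree exactly $k$, i.e.\ it is $\iota$ applied to a cocycle in $\HH^k(A,A) \subset C^*(A,A)$, not in the Hochschild \emph{chains} part, so cup-products landing in the subalgebra can be checked degree by degree — the image of $\star$ always lands in the Hochschild chains part of $\calD^*$, hence in $s^{1-k}\overline{\HH}_*(A,A)$.

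Then I would invoke Corollary \ref{corollary:longexactsequence}: the zig-zag of quasi-isomorphisms induces an isomorphism $\HH_{\sg}^*(A,A) \cong \HH_{\sg}^*(B,B)$ which, by the final sentence of that corollary, coincides with the isomorphism induced by the zig-zag \eqref{align:zig-zagsg}, hence is an \emph{algebra} isomorphism for $\cup$ by Proposition \ref{proposition-7.4}. It remains to see that this algebra isomorphism carries the Goresky-Hingston subalgebra of $A$ onto that of $B$. This is where I expect the one genuine subtlety: the isomorphism $\HH_{\sg}^*(A,A)\cong\HH_{\sg}^*(B,B)$ is built from the $\calE^*$-model and the Calabi-Yau quasi-isomorphisms $\varrho_A, \varrho_B$, and a priori one only knows it respects the long exact sequences in Corollary \ref{corollary:longexactsequence}. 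But the Goresky-Hingston part $s^{1-k}\overline{\HH}_*$ is precisely the image of the connecting-type map in \eqref{align:longexact} landing in $\HH_{i-k+1}(A,A)$ for $i \geq k$ (respectively its complement in the $\chi=0$ case), i.e.\ it is characterized by the lower-row maps $\HH_i(A,\varrho_A)$ in that diagram — and the commutativity of the diagram in Corollary \ref{corollary:longexactsequence} says exactly that the $\HH_{\sg}$-isomorphism matches these up. So the subalgebra $s^{1-k}\overline{\HH}_*(A,A) \subset \HH_{\sg}^{*\geq k}(A,A)$ is sent isomorphically onto $s^{1-k}\overline{\HH}_*(B,B) \subset \HH_{\sg}^{*\geq k}(B,B)$, and since the ambient map is a $\cup$-algebra isomorphism and $\cup$ restricts to $\star$ on these subalgebras (Proposition \ref{prop:iotastar}), the restriction is the desired isomorphism of Goresky-Hingston algebras. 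The main obstacle is bookkeeping: making sure the degree-$k$ ambiguity in the $\chi=0$ case (the $\HH^k$ summand) does not interfere — but since that summand is intrinsically characterized (it is the image of $\HH^k(A,A) \to \HH_{\sg}^k(A,A)$, which the diagram respects via the identity map on $\HH^k(A,A)$), the isomorphism automatically preserves both the summand and its complement, and we are done.
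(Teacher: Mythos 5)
Your proposal is correct and takes essentially the same route as the paper: detect the $\chi(A)\neq 0$ versus $\chi(A)=0$ dichotomy from Proposition \ref{proposition4.7} together with quasi-isomorphism invariance, use $\mathrm H^*(\iota)$ and Proposition \ref{prop:iotastar} to realize $s^{1-k}\overline{\HH}_*$ as a graded subalgebra of $(\HH^{*\geq k}_{\sg},\cup)$, transport the algebra structure through the zig-zag via Proposition \ref{proposition-7.4}, and handle the restriction in the $\chi=0$ case by the functoriality of the long exact sequences in Corollary \ref{corollary:longexactsequence}. One small caution: your parenthetical inference that preserving the image of $\HH^k(A,A)$ ``automatically preserves both the summand and its complement'' is not valid as stated (a complement is not determined by the summand); what the paper actually invokes at this point is exactly the compatibility with the connecting maps and with $\HH^k$ recorded in the commutative diagram of Corollary \ref{corollary:longexactsequence}, which you also cite, so your argument matches the paper's level of justification once that parenthetical is dropped.
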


\begin{proof}
First observe that $\chi(A)\neq 0$ if and only if $\chi(B)\neq 0$. This follows, for instance, from the quasi-isomorphism invariance of (singular) Hochschild cohomology and Proposition \ref{proposition4.7}, which tells us that  if $\chi(A) \neq 0$ then $\HH_{\sg}^{k-1}(A,A)  \cong \HH^{k-1}(A,A)$ and if $\chi(A)=0$ then $\HH_{\sg}^{k-1}(A,A) \cong  \HH^{k-1}(A,A)\oplus \mathbb K$.

In both cases (1) and (2), the middle isomorphism is obtained from the invariance of the singular Hochschild cohomology algebra along quasi-isomorphisms, as proven in Proposition \ref{proposition-7.4}. The outer isomorphisms of graded algebras are obtained from Propositions \ref{proposition4.7} and  \ref{prop:iotastar} and Remark \ref{remark-simply}. It follows that the compositions in both cases (1) and (2) are isomorphisms of graded algebras. 
It remains to show that the composition in the case (2)
 restricts to an isomorphism $$s^{1-k}\overline{\HH}_*(A,A) \cong s^{1-k}\overline{\HH}_*(B,B).$$ 
 This follows from the functoriality of the long exact sequences with respect to quasi-isomorphisms in Corollary \ref{corollary:longexactsequence}. 
\end{proof}

\begin{remark}
The argument in the above proof actually shows that, if $\chi(A)=0=\chi(B)$, the isomorphism $\HH_{\sg}^*(A,A) \cong \HH_{\sg}^*(B,B)$ restricts to an isomorphism of (ordinary) Hochschild homology $s^{1-k}\HH_*(A, A)\cong s^{1-k}\HH_*(B, B)$ which preserves the Goresky-Hingston algebra structures, which exist without restricting to reduced Hochschild homology in this particular case as mentioned in Remark \ref{extension}. 
\end{remark}

\begin{proof}[Proof of Theorem \ref{main}] Theorem \ref{main} in the introduction follows directly from Theorem \ref{theorem-m}.
\end{proof}

\begin{proof}[Proof of Corollary \ref{corollary1.2}]  Part (1) of the corollary follows from Theorem \ref{main} since any two Poincar\'e duality models for $\mathcal{A}(M)$ are connected by a zig-zag of quasi-isomorphisms of simply connected cdg algebras. Part (2) follows since if $A$  and $A'$ are Poincar\'e duality models for $\mathcal{A}(M)$ and $\mathcal{A}(M')$ and $M$ and $M'$ are homotopy equivalent oriented closed manifolds of dimension $k$, then $A$ and $A'$ are connected by a zig-zag of quasi-isomorphisms of simply connected cdg algebras. \end{proof}

\appendix
\section{Proof of Theorem \ref{thm:homtopyretract}}
\label{sectionappendix}
In this appendix we will prove Theorem \ref{thm:homtopyretract}. 
\subsection{Basic identities regarding the action $\blacktriangleright$}

The following identities will be useful when keeping track of the signs in the computations in this appendix. 

\begin{lemma}\label{lemma:bascipropertyeifi1}
Let $\partial_h$ be the external differential of $C_*(A, A)$ as in Definition \ref{definition:hochschildchain}. Let  $\blacktriangleright$ be the action described in Lemma \ref{lemma-bimodule}. Define $\varepsilon(\overline{a}) = \langle a, 1\rangle$ for any $a \in A$.
For any element $\alpha =\overline{a_1}\otimes \dotsb \otimes \overline{a_p} \otimes a_{p+1} \in (s\overline A)^{\otimes p} \otimes A$, we have the following identities
\begin{enumerate}
\item $ \sum_i (-1)^{|f_i||\alpha|} (\varepsilon \otimes \id^{\otimes p}) (e_i \blacktriangleright (\overline{a_1}\otimes \dotsb \otimes \overline{a_p} \otimes a_{p+1}  f_i)) = \partial_h (\alpha),$
\item  $\sum_i (-1)^{(|f_i|-1)(1-k)} \overline{e_i}\otimes (\varepsilon\otimes \id^{\otimes p})(f_i\blacktriangleright \alpha) = -\alpha,$
\item $\sum_i (-1)^{|e_i|(1-k)} e_i \blacktriangleright (\varepsilon \otimes \id^{\otimes p}) (f_i \blacktriangleright \alpha) = 0$.

\end{enumerate}

\end{lemma}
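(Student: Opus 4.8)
The plan is to verify all three identities by a direct expansion, using only four inputs: the explicit formula for the action $\blacktriangleright$ from Lemma~\ref{lemma-bimodule}, the Frobenius adjunction $\langle xy,1\rangle=\langle x,y\rangle$ (axiom (iii) with $c=1$), the reconstruction identities of Lemma~\ref{lemma:bascipropertyeifi}(1), and the vanishing $\pi(1_A)=0$. First I would record that for $a\in A$ and $\alpha=\overline{a_1}\otimes\cdots\otimes\overline{a_p}\otimes a_{p+1}$ (with $p\ge 1$),
\begin{align*}
a\blacktriangleright\alpha={}& (-1)^{|a|}\,\overline{aa_1}\otimes\overline{a_2}\otimes\cdots\otimes a_{p+1}\\
&+\sum_{j=1}^{p-1}(-1)^{|a|+\epsilon_j}\,\overline{a}\otimes\overline{a_1}\otimes\cdots\otimes\overline{a_ja_{j+1}}\otimes\cdots\otimes a_{p+1}\\
&-(-1)^{|a|+\epsilon_{p-1}}\,\overline{a}\otimes\overline{a_1}\otimes\cdots\otimes\overline{a_{p-1}}\otimes a_pa_{p+1},
\end{align*}
where $\epsilon_j=|a_1|+\cdots+|a_j|-j$, and that applying $\varepsilon\otimes\id^{\otimes p}$ turns $\varepsilon(\overline{aa_1})$ into $\langle a,a_1\rangle$ and $\varepsilon(\overline{a})$ into $\langle a,1\rangle$ with no extra Koszul sign, since $\varepsilon$ acts on the leftmost factor. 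The recurring simplification is that whenever a scalar of the form $\langle e_i,a_1\rangle$ or $\langle f_i,a_1\rangle$ (resp.\ $\langle e_i,1\rangle$ or $\langle f_i,1\rangle$) is nonzero, the degrees $|e_i|,|f_i|$ are pinned down by $|e_i|+|f_i|=k$ and the homogeneity of $\langle-,-\rangle$; hence every per-$i$ exponent becomes a function of $k$ and the $|a_j|$ alone, the summations $\sum_i$ pass through all signs, and they can be contracted by Lemma~\ref{lemma:bascipropertyeifi}(1).

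For (1): applying $\varepsilon\otimes\id^{\otimes p}$ to the ``merge into the first factor'' term of $e_i\blacktriangleright(\overline{a_1}\otimes\cdots\otimes\overline{a_p}\otimes a_{p+1}f_i)$ gives $\langle e_i,a_1\rangle\,\overline{a_2}\otimes\cdots\otimes a_{p+1}f_i$; summing against $(-1)^{|f_i||\alpha|}$ and contracting $\sum_i\langle e_i,a_1\rangle f_i=(-1)^{k-|a_1|}a_1$ produces precisely the cyclic term of $\partial_h(\alpha)$, once one checks the residual sign collapses (using $|\alpha|=\sum_j|a_j|-p$, so that $(|a_2|+\cdots+|a_{p+1}|-p+1)|a_1|\equiv|\alpha|\,|a_1|\pmod 2$). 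The remaining terms carry the factor $\langle e_i,1\rangle$, which forces $f_i\propto 1_A$; contracting $\sum_i\langle e_i,1\rangle f_i=(-1)^k1_A$ then reproduces the ``merging'' summands $(-1)^{\epsilon_j}\cdots\overline{a_ja_{j+1}}\cdots$ and $-(-1)^{\epsilon_{p-1}}\cdots a_pa_{p+1}$ of $\partial_h(\alpha)$. Adding the contributions yields $\partial_h(\alpha)$.

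For (2) and (3) I would run the same expansion. In (2), the terms in which $\varepsilon$ lands on $\overline{f_i}$ rather than on $\overline{f_ia_1}$ force $|f_i|=k$, hence $|e_i|=0$ and $\overline{e_i}=\pi(e_i)=0$; thus only the ``merge-first'' term survives, and contracting $\sum_i\langle f_i,a_1\rangle e_i=(-1)^{|a_1|k}a_1$ together with a sign count (the total exponent reducing to $1\pmod 2$) gives $-\alpha$. In (3) the same split leaves two surviving contributions: $(-1)^k\,a_1\blacktriangleright(\overline{a_2}\otimes\cdots\otimes a_{p+1})$ from the ``merge-first'' part after contracting $\sum_i\langle f_i,a_1\rangle e_i$, and $(-1)^kR$ from the rest, where
\[
R:=\sum_{j=1}^{p-1}(-1)^{\epsilon_j}\overline{a_1}\otimes\cdots\otimes\overline{a_ja_{j+1}}\otimes\cdots\otimes a_{p+1}-(-1)^{\epsilon_{p-1}}\overline{a_1}\otimes\cdots\otimes a_pa_{p+1}
\]
is the non-cyclic part of the external differential of $\alpha$; here one uses that $e_i\blacktriangleright(-)$ acts as the identity when $e_i\propto 1_A$ (because $\pi(1_A)=0$) and that $\sum_i\langle f_i,1\rangle e_i=1_A$. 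A termwise comparison — matching the $(j-1)$-st term of $a_1\blacktriangleright(\overline{a_2}\otimes\cdots\otimes a_{p+1})$ against the $j$-th term of $R$ through the exponent identity $|a_1|+\bigl(|a_2|+\cdots+|a_j|-(j-1)\bigr)=\epsilon_j+1$ — then shows $a_1\blacktriangleright(\overline{a_2}\otimes\cdots\otimes a_{p+1})=-R$, so the two contributions cancel and (3) equals $0$.

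I do not expect any conceptual difficulty: the entire content sits in Lemma~\ref{lemma:bascipropertyeifi}(1), supplemented by $\pi(1_A)=0$ to discard the unit-supported terms. The only genuine obstacle is the sign bookkeeping — tracking Koszul signs through $\pi$, $\varepsilon$ and $\blacktriangleright$, and, in (3), reindexing the exponents $\epsilon_j$ of the length-$p$ tuple against those of the length-$(p-1)$ tuple obtained after the contraction. I would manage this by fixing at the outset the homogeneity relations ($|e_i|+|f_i|=k$; $|e_i|=k-|a_1|$ on the support of $\langle e_i,a_1\rangle$; $|e_i|=0$ on the support of $\langle e_i,1\rangle$; and the mirror statements for the $f_i$), which is exactly what forces every exponent to depend only on $k$ and the degrees $|a_j|$.
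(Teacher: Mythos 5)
Your computation follows the same route as the paper's own proof: expand the action $\blacktriangleright$ via $b_{-p}$, apply $\varepsilon\otimes\id^{\otimes p}$ with no Koszul sign on the leftmost factor, and contract the resulting $\langle e_i,a_1\rangle$, $\langle f_i,a_1\rangle$, $\langle e_i,1\rangle$, $\langle f_i,1\rangle$ terms using Lemma \ref{lemma:bascipropertyeifi}(1), with the degree constraints $|e_i|+|f_i|=k$ pinning down all signs on the support of each pairing. Your sign reductions in (1) and (2) check out, and your treatment of (3) — splitting into the contribution $(-1)^k a_1\blacktriangleright(\overline{a_2}\otimes\cdots\otimes a_{p+1})$ and $(-1)^k R$, and cancelling termwise via $|a_1|+(|a_2|+\cdots+|a_j|-(j-1))=\epsilon_j+1$ — is correct and actually supplies the detail the paper omits (it only says the third identity is "similar").

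The one point to repair is how you discard the unit-supported terms. In (2) you argue per term: $\langle f_i,1\rangle\neq 0$ forces $|f_i|=k$, hence $|e_i|=0$ and $\overline{e_i}=\pi(e_i)=0$; similarly in (1) you assert $\langle e_i,1\rangle\neq 0$ forces $f_i\propto 1_A$. Both claims use $A^0\cong\mathbb{K}\cdot 1_A$, i.e.\ connectedness, which is \emph{not} among the standing hypotheses of Section \ref{section4} and the appendix (only that $A$ is a dg Frobenius algebra of degree $k>0$); for a non-connected Frobenius algebra a degree-zero $e_i$ need not project to zero in $s\overline A$. The paper's proof avoids this by killing these terms only \emph{after} summing over $i$: on the relevant support the sign is constant, so the sum is a multiple of $\sum_i\varepsilon(f_i)\overline{e_i}=\pi\bigl(\sum_i\langle f_i,1\rangle e_i\bigr)=\overline{1_A}=0$ (and in (1), (3) one only needs $\sum_i\langle e_i,1\rangle f_i=(-1)^k 1_A$, resp.\ $\sum_i\langle f_i,1\rangle e_i=1_A$ together with $1_A\blacktriangleright x=x$, which you in fact already invoke). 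Replacing your per-term vanishing statements by these summed contractions makes the argument valid in the full generality in which the lemma is stated; as written, it is complete only under the additional assumption that $A$ is connected.
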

\begin{proof}
For the first identity,  we have 
\begin{align*}
&\sum_i (-1)^{|f_i||\alpha|} (\varepsilon \otimes \id^{\otimes p}) (e_i \blacktriangleright (\overline{a_1}\otimes \dotsb \otimes \overline{a_p} \otimes a_{p+1} f_i))\\
={}&   \sum_i(-1)^{|f_i||\alpha|+|e_i|}  \bigg( \varepsilon(e_ia_1) \overline{a_2} \otimes \dotsb\otimes \overline{a_p} \otimes a_{p+1}f_i  -(-1)^{ \epsilon_{p-1}} \varepsilon(e_i) \overline{a_1}\otimes \dotsb\otimes \overline{a_{p-1}} \otimes a_p a_{p+1}f_i\\
&\quad\quad + \sum_{j=1}^{p-1} (-1)^{\epsilon_j} \varepsilon(e_i) \overline{a_1}\otimes\dotsb\otimes \overline{a_{j-1}} \otimes \overline{a_ja_{j+1}}\otimes \dotsb \otimes \overline{a_p} \otimes a_{p+1}f_i\bigg)\\
={}& \partial_h(\overline{a_1}\otimes \dotsb \otimes \overline{a_p}\otimes a_{p+1}),
\end{align*}
where $\epsilon_j = |a_1| +\dotsb+|a_j|-j$ and the second equality follows from Lemma \ref{lemma:bascipropertyeifi} (1).

Let us verify the second identity.  We have 
\begin{align*}
&\sum_i (-1)^{(|f_i|-1)(1-k)} \overline{e_i}\otimes (\varepsilon\otimes \id^{\otimes p})(f_i\blacktriangleright (\overline{a_1}\otimes \dotsb \otimes \overline{a_p} \otimes a_{p+1}) )\\
={}& \sum_i \bigg((-1)^{(|f_i|-1)(1-k) +|f_i|} \varepsilon(f_ia_1) \overline{e_i } \otimes \overline{a_2}\otimes  \dotsb \otimes \overline{a_p} \otimes a_{p+1} \\
&\ \ + \sum_{j=1}^{p-1}(-1)^{(|f_i|-1)(1-k) + |f_i| + \epsilon_j} \varepsilon(f_i) \overline{e_i } \otimes \overline{a_1} \otimes \dotsb \otimes \overline{a_{j-1}} \otimes \overline{a_ja_{j+1}} \otimes \dotsb\otimes a_{p+1} \\
&\ \ - (-1)^{(|f_i|-1)(1-k) + |f_i| + \epsilon_{p-1}} \varepsilon(f_i) \overline{e_i } \otimes  \overline{a_1}\otimes \dotsb\otimes \overline{a_{p-1}} \otimes a_pa_{p+1}\bigg)\\
={}&- \overline {a_1} \otimes \dotsb \otimes \overline{a_p} \otimes a_{p+1},
\end{align*}
where in the second equality we use the fact that  $\sum_i \varepsilon(f_i) \overline{e_i} = \overline 1 = 0$ in $s\overline A$. The third identity can be verified in a similar way. 
\end{proof}

\subsection{The injection $\iota$ is a cochain map}
Recall the injection $\iota \colon \calD^*(A, A) \hookrightarrow \calC_{\sg}^*(A,A)$ is defined in Definition \ref{definitioniota}. We now check $\iota$ is compatible with differentials. 

\begin{lemma}\label{lemma:appendix}
The map $\iota$ is a cochain map. 
\end{lemma}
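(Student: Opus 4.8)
The goal is to verify that $\iota$ commutes with the differentials on $\calD^*(A,A)$ and $\calC_{\sg}^*(A,A)$. Since $\calD^*(A,A)$ is a mapping cone (see \eqref{mappingcone}), its differential has three types of components: the Hochschild cochain differential $\delta$ on the $C^*(A,A)$ part, the Hochschild chain differential $\partial = \partial_v + \partial_h$ on the $s^{-k}C_*(A,A)$ part, and the connecting map $\gamma\colon a \mapsto \sum_i (-1)^{|f_i||a|} e_i a f_i$ from degree zero chains to degree zero cochains. On the target, $\calC_{\sg}^*(A,A)$ carries the differential $\delta_{\sg}$ induced by the Hochschild cochain differentials on each $C^*(A,\Omega_{\nc}^p(A))$, compatible with the colimit maps $\theta_p$. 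So the plan is to check compatibility case by case according to which component of the source element we are testing.

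\textbf{Step 1.} On the cochain part $C^*(A,A) \subset \calD^*(A,A)$, the map $\iota$ is literally the inclusion $C^*(A,A) = C^*(A,\Omega_{\nc}^0(A)) \hookrightarrow \calC_{\sg}^*(A,A)$, so here there is nothing to check beyond recalling that the Hochschild differential on $C^*(A,A)$ agrees with $\delta_{\sg}$ restricted to the image, which is immediate from the definition of $\delta_{\sg}$ as the colimit of the Hochschild differentials.

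\textbf{Step 2.} The substantive case is a chain element $\alpha = \overline{a_1}\otimes \cdots \otimes \overline{a_p}\otimes a_{p+1} \in C_{-p,*}(A,A)$. Here I must compare $\iota(\partial\alpha)$ (including, when $p=0$, the term $\gamma(\alpha)$) with $\delta_{\sg}(\iota(\alpha))$, where $\iota(\alpha)(1_{\mathbb K}) = \sum_i (-1)^{|f_i||\alpha|}\overline{e_i}\otimes \overline{a_1}\otimes\cdots\otimes\overline{a_p}\otimes a_{p+1}f_i \in \Omega_{\nc}^{p+1}(A) = C^{0,*}(A,\Omega_{\nc}^{p+1}(A))$. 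Applying $\delta_{\sg} = \delta^v + \delta^h$ to an element of $C^{0,*}$ produces: (a) an internal part $\delta^v$ coming from the differential of the bimodule $\Omega_{\nc}^{p+1}(A)$, which I expect to match $\iota(\partial_v \alpha)$ after using the Leibniz-type identities for $\sum_i de_i \otimes f_i$ from Lemma \ref{lemma:bascipropertyeifi}(3) and the explicit differential on $(s\overline A)^{\otimes p+1}\otimes A$ from Lemma \ref{lemma-bimodule}; and (b) an external part $\delta^h$ which evaluated on $\overline{a_{p+2}}$ involves the left and right $A$-actions $a_{p+2}\blacktriangleright(-)$ and $(-)a_{p+2}$ on $\Omega_{\nc}^{p+1}(A)$. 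The external contractions among the middle factors $\overline{a_1},\dots,\overline{a_p}$ should reproduce $\iota(\partial_h^{\text{internal}}\alpha)$, the right multiplication should handle the $a_pa_{p+1}$ boundary term, and the left action $a_{p+2}\blacktriangleright(-)$ together with the cyclic/left-module structure should reproduce the cyclic term in $\partial_h$ and — crucially, in the $p=0$ case — the term $\iota(\gamma(\alpha))$, since $a_{p+2}\blacktriangleright \overline{e_i} = \overline{a_{p+2}e_i} - \overline{e_i a_{p+2}} \cdot(\text{sign})$ after projecting, and combining with the $f_i$ on the right and Lemma \ref{lemma:bascipropertyeifi}(1) collapses to the Frobenius trace $\sum_i e_i(\cdot)f_i$. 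The first identity in Lemma \ref{lemma:bascipropertyeifi1} (with $\varepsilon(\overline{a}) = \langle a,1\rangle$) is designed precisely to convert the contraction involving $e_i$ into $\partial_h$, and the second identity handles the leading $\overline{e_i}$ factor, so these two lemmas do most of the bookkeeping.

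\textbf{Step 3 (signs).} The only real obstacle is sign verification: every term must be tracked through the shift $s^{1-k}$, through the Koszul conventions in the definitions of $\partial_h$, $\delta^h$, and the bimodule action $\blacktriangleright$, and through the factors $(-1)^{|f_i||\alpha|}$ and $(-1)^{|f_i||f_j|+k}$ that appear when manipulating the Casimir element. I would organize the computation by first writing out $\delta^h(\iota(\alpha))(\overline{a_1}\otimes\cdots\otimes\overline{a_{p+1}})$ in full using formula \eqref{equation-definition-theta} and the $\blacktriangleright$-action, then grouping terms into: (i) those where $a_{p+2}$ meets the leading $\overline{e_i}$, (ii) those internal to the $\overline{a_j}$-block, (iii) the rightmost term with $a_{p+1}f_i a_{p+2}$, and matching each group against a term in $\iota(\partial\alpha)$ using Remark \ref{remark:usefulidentities} and Lemma \ref{lemma:bascipropertyeifi1}. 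I expect that once the $p>0$ case is done, the $p=0$ case follows by the same computation with the extra observation that $e_i a_1 f_i$ is nonzero only when $a_1 \in A^0$, so the discrepancy term is exactly $\gamma(\alpha) \in A^k = C^{0,*}(A,A)$, which is what the mapping-cone differential on $\calD^*(A,A)$ demands. This is a finite, if tedious, sign chase; there is no conceptual difficulty once the identities of Lemma \ref{lemma:bascipropertyeifi1} are in hand, which is why the authors isolated them first.
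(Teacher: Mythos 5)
Your plan is correct and follows the paper's proof in essence: on the cochain part $\iota$ is just the inclusion, and for a chain $\alpha$ one writes out $\delta^h(\iota(\alpha))$ on one extra input using the $\blacktriangleright$-action of Lemma \ref{lemma-bimodule}, compares it with $\iota(\partial\alpha)$ pushed into the same spot of the colimit by the structure map \eqref{equation-definition-theta}, and cancels the leftover terms with the Casimir identities; the internal differential is handled by Lemma \ref{lemma:bascipropertyeifi}(3), and the extra term appearing in the $C_{0,*}$ case is exactly $\gamma(\alpha)$, as the mapping-cone differential demands (the paper only writes out the external-differential case with at least one bar factor, so your treatment of the $\gamma$ term is a welcome addition). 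Two details of your predicted bookkeeping will need adjusting when you execute the computation: the $a_pa_{p+1}$ boundary term of $\partial_h$ is matched by the \emph{last} term of the left action $a_0\blacktriangleright(-)$, not by right multiplication; instead, the right-multiplication term $\overline{e_i}\otimes\cdots\otimes a_{p+1}f_ia_0$ cancels against $\overline{a_0e_i}\otimes\cdots\otimes a_{p+1}f_i$ via Lemma \ref{lemma:bascipropertyeifi}(4), while the cyclic term of $\partial_h$ cancels against $\overline{a_0}\otimes\overline{e_ia_1}\otimes\cdots\otimes a_{p+1}f_i$ via Lemma \ref{lemma:bascipropertyeifi}(5), in the form recorded in Remark \ref{remark:usefulidentities}. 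Also, the $\varepsilon$-identities of Lemma \ref{lemma:bascipropertyeifi1} play no role in this particular verification — they are tailored to $\Pi$ and $H$ — so the sign chase rests entirely on Lemma \ref{lemma:bascipropertyeifi} and Remark \ref{remark:usefulidentities}, which you also cite; with that reallocation the argument goes through as planned.
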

\begin{proof}
Let us first check that $\iota$ is compatible with the external differentials. It suffices to prove that the following diagram commutes for $p >1$
\[
\xymatrix{
s^{1-k}C_{-(p-1), *} (A, A) \ar[d]_-{(-1)^{1-k} \partial_h} \ar[r]^-{\iota} & C^{0, *}(A, \Omega_{\nc}^p(A))\ar[dr]^-{\delta^h}\\
s^{1-k}C_{-(p-2), *}(A, A) \ar[r]^-{\iota} & C^{0, *}(A, \Omega_{\nc}^{p-1}(A)) \ar[r]^-{\theta} & C^{1, *}(A, \Omega_{\nc}^{p}(A)).  
}
\]
Let $\alpha = \overline{a_1} \otimes \dotsb \otimes \overline{a_{p-1}} \otimes a_{p}$. We have
 \begin{align*}
&(-1)^{1-k}\theta\circ \iota \circ \partial_h( \overline{a_1} \otimes \dotsb \otimes \overline{a_{p-1}} \otimes a_{p}) (\overline{a_0})\\
 ={}&  \sum_i \sum_{j=1}^{p-2}(-1)^{1-k+ |f_i|(|\alpha|+1) + (|a_0|-1)(|\alpha|+k) + \epsilon_j} \overline{a_0} \otimes \overline{e_i} \otimes \overline{a_1} \otimes \dotsb \otimes \overline{a_ja_{j+1}} \otimes \dotsb \otimes \overline{a_{p-1}} \otimes a_p f_j\\
& -\sum_i (-1)^{1-k+ |f_i|(|\alpha|+1) + (|a_0|-1)(|\alpha|+k) + \epsilon_{p-2}} \overline{a_0} \otimes \overline{e_i} \otimes \overline{a_1}\otimes \dotsb \otimes \overline{a_{p-2}}\otimes a_{p-1}a_p f_i\\
&+ \sum_i (-1)^{1-k+ |f_i|(|\alpha|+1) + (|a_0|-1)(|\alpha|+k) + (|a_2|+\dotsb+|a_{p}|-p)|a_1|} \overline{a_0} \otimes \overline{e_i} \otimes \overline{a_2}\otimes \dotsb \otimes \overline{a_{p-1}}\otimes a_pa_1 f_i
\end{align*}
and  
\begin{align*}
\delta^h \circ \iota( \overline{a_1} \otimes \dotsb \otimes \overline{a_{p-1}} \otimes a_{p}) (\overline{a_0})=& -\sum_i(-1)^{|f_i||\alpha| +(|a_0|-1)(|\alpha|+k-1)} a_0 \blacktriangleright  (\overline{e_i} \otimes \overline{a_1}\otimes \dotsb \otimes a_{p}f_i) \\
& +\sum_i(-1)^{|f_i||\alpha| + |\alpha|+k-1 } \overline{e_i} \otimes \overline{a_1}\otimes \dotsb \otimes \overline{a_{p-1}} \otimes a_{p}f_i a_0,
\end{align*}
where $\epsilon_j = |a_1|+\dotsb+|a_j|-j$.
By Lemma \ref{lemma-bimodule}, we may cancel the common terms and obtain
\begin{align*}
&((-1)^{1-k}\theta\circ \iota \circ \partial_h-\delta^h \circ \iota)( \overline{a_1} \otimes \dotsb \otimes \overline{a_{p-1}} \otimes a_{p}) (\overline{a_0})  
\\
={}& \sum_i \bigg((-1)^{1-k+ |f_i|(|\alpha|+1) + (|a_0|-1)(|\alpha|+k) + (|a_2|+\dotsb+|a_{p}|-p)|a_1|} \overline{a_0} \otimes \overline{e_i} \otimes \overline{a_2}\otimes \dotsb \otimes \overline{a_{p-1}}\otimes a_pa_1 f_i\\
&\quad\quad + (-1)^{|f_i||\alpha| +(|a_0|-1)(|\alpha|+k-1)+|a_0|} \overline{a_0e_i} \otimes \overline{a_1}\otimes \dotsb \otimes \overline{a_{p-1}} \otimes a_{p}f_i \\
&\quad \quad+ (-1)^{|f_i||\alpha| +(|a_0|-1)(|\alpha|+k-1)+|a_0|+|e_i|-1} \overline{a_0} \otimes \overline{e_ia_1}\otimes \dotsb \otimes \overline{a_{p-1}} \otimes a_{p}f_i\\
& \quad\quad -(-1)^{|f_i||\alpha| + |\alpha|+k-1 } \overline{e_i} \otimes \overline{a_1}\otimes \dotsb \otimes \overline{a_{p-1}} \otimes a_{p}f_i a_0\bigg).
\end{align*} 
It follows from  Remark \ref{remark:usefulidentities} that the first term cancels with the third one and the second term cancels with the fourth one. We obtain $(-1)^{1-k}\theta\circ \iota \circ \partial_h-\delta^h \circ \iota=0$. Similarly, we may check that $\iota$ is compatible with the internal differential. \end{proof}

\subsection{The surjection $\Pi$} 

We will now construct $\Pi\colon \calC_{\sg}^*(A, A)\rightarrow \calD^*(A, A)$.  If  $m,p \in \Z_{>0}$ define   $$\pi_{m, p}\colon C^{m, *}(A, \Omega_{\nc}^p(A))\rightarrow C^{m-1, *}(A, \Omega_{\nc}^{p-1}(A))$$ on any $f\in C^{m, *}(A, (\sA)^{\otimes p}\otimes A) =  \Hom_{\mathbb K}((\sA)^{\otimes m}, (\sA)^{\otimes p} \otimes A)$ by letting
\begin{align*}
 \pi_{m,p}(f)(\overline{a_1}\otimes \cdots\otimes \overline{a_{m-1} })=\sum_{i } (-1)^{ (|f_i|-1) (|f|+k)}  e_i \blacktriangleright(\varepsilon \otimes \id^{\otimes p}) (f(\overline{f_i}\otimes \overline{a_1}\otimes \cdots \otimes \overline{a_{m-1}})),
\end{align*}
where $\varepsilon\colon s\overline{A} \to \mathbb{K}$ is the degree $1-k$ map given by $\varepsilon( \overline{a} ) = \langle a,1\rangle$ and  $\blacktriangleright$ is the left action of $A$ on $\Omega^{p-1}_{\nc}(A)$ defined in Lemma \ref{lemma-bimodule}. For convenience, set $\pi_{m,0}=\id\colon C^{m,*}(A,A) \to C^{m,*}(A,A)$ for $m \geq 0$. 

 Define  $$\pi_{0, p}\colon C^{0, *}(A, \Omega_{\nc}^p(A)) \rightarrow C_{-(p-1), *}(A, A), \quad\quad \text{for $p > 0$}$$ as follows. If $f \in C^{0, *}(A, \Omega_{\nc}^p(A)) = \text{Hom}_{\mathbb{K}}(\mathbb{K}, \Omega_{\nc}^p(A))$ and $f(1)=\overline{a_1}\otimes \cdots \otimes \overline{a_p}\otimes a_{p+1} \in \Omega_{\nc}^p(A)=(\sA)^{\otimes p}\otimes A$ let $$\pi_{0, p}(f)= (-1)^{ k}\varepsilon(\overline{a_1})\overline{a_2}\otimes \cdots \otimes \overline{a_p}\otimes a_{p+1}.$$


\begin{lemma}\label{lemma5.2}
$\pi_{*, *}$ is compatible with the differentials.
\end{lemma}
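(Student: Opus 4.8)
**Proof strategy for Lemma 5.2 (that $\pi_{*,*}$ is compatible with the differentials).**

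The plan is to verify, piece by piece, that $\Pi$ (assembled from the maps $\pi_{m,p}$) commutes with the differentials on $\calC_{\sg}^*(A,A)$ and $\calD^*(A,A)$. Since $\calC_{\sg}^*(A,A)$ is a colimit of the complexes $C^*(A,\Omega_{\nc}^p(A))$ along the maps $\theta_p$, and $\calD^*(A,A)$ is a mapping cone, the statement decomposes into several commuting-square checks: (a) $\pi_{m,p}$ is compatible with the internal Hochschild differential $\delta^v$; (b) $\pi_{m,p}$ is compatible with the external Hochschild differential $\delta^h$; (c) the $\pi_{m,p}$ are compatible with the structure maps $\theta_p$ (so that they descend to the colimit); and (d) at the boundary $p=0$, the maps $\pi_{0,p}$ and $\pi_{m,0}$ interact correctly with the connecting differential $\gamma$ of the mapping cone $\calD^*(A,A)$, i.e.\ that the square relating $C^{0,*}(A,\Omega_{\nc}^1(A)) \to C_{0,*}(A,A)$ and $\gamma\colon s^{1-k}C_{0,*}(A,A) \to C^{0,*}(A,A)$ commutes.

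The first steps I would carry out are the purely algebraic sign bookkeeping for (a) and (b). For (b), one expands $\delta^h \circ \pi_{m,p}(f)$ and $\pi_{m-1,p-1}\circ \delta^h(f)$ on a test element $\overline{a_1}\otimes\cdots\otimes\overline{a_{m-1}}$; the terms of $\delta^h$ that act on the ``interior'' slots $a_1,\dots,a_{m-1}$ match up directly, while the terms involving the inserted variable $\overline{f_i}$ (the first and last summands of $\delta^h$, plus the $a_i a_{i+1}$ term straddling $f_i$ and $a_1$) must be reorganized using the bimodule identities for the Casimir element. Here the key inputs are Lemma \ref{lemma:bascipropertyeifi}, the sign identities in Remark \ref{remark:usefulidentities}, and — crucially — the three identities in Lemma \ref{lemma:bascipropertyeifi1} concerning the action $\blacktriangleright$ together with the augmentation $\varepsilon$. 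In particular identity (1) of Lemma \ref{lemma:bascipropertyeifi1} is exactly what converts a $\blacktriangleright$-contraction against the Casimir element back into the external differential $\partial_h$, which is what governs the $\calD^*$-side at the bottom of the column ($p=0$); identity (2) handles the ``unit disappears'' phenomenon $\sum_i \varepsilon(f_i)\overline{e_i}=\overline 1 = 0$; and identity (3) is the relation needed to kill the leftover term in the $p=1 \to p=0$ transition. The compatibility with $\delta^v$ in (a) is comparatively routine: $\delta^v$ acts by the tensor differential and one only needs $\sum_i de_i\otimes f_i = -\sum_i(-1)^{|e_i|}e_i\otimes df_i$ from Lemma \ref{lemma:bascipropertyeifi}(3), plus the fact that $\varepsilon$ is a cochain map up to the appropriate shift sign (which follows from axiom (v) of a dg Frobenius algebra, since $\varepsilon = \langle -,1\rangle$ and $d(1)=0$).

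The main obstacle I anticipate is not any single conceptual difficulty but the sheer sign accounting at the two ``corners'' of the picture: the transition $\pi_{1,p}\colon C^{1,*}(A,\Omega_{\nc}^p)\to C^{0,*}(A,\Omega_{\nc}^{p-1})$, which lands in a Hochschild \emph{cochain} degree $0$ object identified with $A$ only when $p=1$, and the transition $\pi_{0,1}\colon C^{0,*}(A,\Omega_{\nc}^1)\to C_{0,*}(A,A)=A$ feeding into the mapping-cone differential $\gamma$. At these corners the roles of Hochschild \emph{chains} and \emph{cochains} swap, the shift $s^{1-k}$ enters, and the formulas for $\pi_{m,p}$ (with the $\varepsilon\otimes\id^{\otimes p}$ contraction and the extra $\overline{f_i}$ slot) must be matched against the simpler formula for $\pi_{0,p}$ (which contracts $\overline{a_1}$ against $\varepsilon$ directly). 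I would therefore treat $p\ge 2$ first as the ``generic'' case, isolate the $p=1$ and $p=0$ cases as explicit separate computations, and in each use Lemma \ref{lemma:bascipropertyeifi1} to identify the surviving term with (a scalar multiple of) $\partial$ or $\gamma$. Once all four squares (a)--(d) commute at the cochain level, compatibility with the colimit differential $\delta_{\sg}$ and with the totalization differential $\delta$ of $\calD^*(A,A)$ follows formally, completing the proof that $\Pi$ — and hence each $\pi_{*,*}$ — is a cochain map.
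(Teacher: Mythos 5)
Your plan is correct and follows essentially the same route as the paper's proof: a direct verification that each $\pi_{m,p}$ commutes with $\delta^v$ and $\delta^h$, treating $m>0$ generically and the small-$p$ corners (including the mapping-cone map $\gamma$) separately, with all cancellations driven by the Casimir identities of Lemma \ref{lemma:bascipropertyeifi} and Remark \ref{remark:usefulidentities} together with the $\blacktriangleright$--$\varepsilon$ identities of Lemma \ref{lemma:bascipropertyeifi1}. The only caveat is one of scope and phrasing in your item (c): compatibility with the structure maps is not a commuting square --- what holds and is what makes $\Pi$ well defined on the colimit is $\pi_{m+1,p+1}\circ\theta_{m,p}=\id$, which the paper isolates as Lemma \ref{lemma5.3} rather than as part of Lemma \ref{lemma5.2}.
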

\begin{proof}
First we check $\pi_{>0, *}$ is compatible with the external differential $\delta^h$. Let $f\in C^{m+1, *}(A, \Omega_{\nc}^p(A))$, $m\geq 0$. Then for any $\overline{a_1}\otimes \cdots \otimes \overline{a_{m+1}}\in 
(\sA)^{\otimes m+1}$ we have 

\begin{align*}
&\pi_{*, *}\circ \delta^h(f) (\overline{a_1}\otimes \cdots \otimes \overline{a_{m+1}}) \\
={} &-\sum_i (-1)^{(|f_i|-1)(k+1)} e_i \blacktriangleright(\varepsilon \otimes \id^{\otimes p}) (f_i\blacktriangleright f ( \overline{a_1}\otimes \cdots \otimes
\overline{a_{m+1}}))\\
&-\sum_i  (-1)^{|f_i||f|+(|f_i|-1)k}e_i \blacktriangleright(\varepsilon \otimes \id^{\otimes p}) (f(\overline{f_ia_1}\otimes  \cdots \otimes \overline{a_{m+1}}))\\
& -\sum_{j=1}^m\sum_i (-1)^{|f_i||f| +(|f_i|-1)k+ \epsilon_j} e_i \blacktriangleright(\varepsilon \otimes \id^{\otimes p}) (f(\overline{f_i}\otimes  \cdots \otimes  \overline{a_ja_{j+1}}\otimes \cdots \otimes 
\overline{a_{m+1}}))\\
 & +\sum_i (-1)^{|f_i||f| +(|f_i|-1)k+ \epsilon_m} e_i \blacktriangleright(\varepsilon \otimes \id^{\otimes p}) (f(\overline{f_i}\otimes \overline{a_1}\otimes \cdots \otimes \overline{a_m})a_{m+1}),
\end{align*}
where $\epsilon_j = |a_1|+\dotsb+ |a_j|-j$.
On the other hand, we have
\begin{align*}
&\delta^h\circ \pi_{*, *}(f) (\overline{a_1}\otimes \cdots \otimes \overline{a_{m+1}})\\
={}&-\sum_i (-1)^{(|a_1|+|f_i|)|f|+(|f_i|-1)k} a_1\blacktriangleright( e_i \blacktriangleright(\varepsilon \otimes \id^{\otimes p}) (f(\overline{f_i}\otimes \overline{a_2}\otimes \cdots \otimes
\overline{a_{m+1}})))\\
&-\sum_{j=1}^{m}\sum_i (-1)^{|f_i||f| +(|f_i|-1)k+ \epsilon_j} e_i \blacktriangleright(\varepsilon \otimes \id^{\otimes p}) (f(\overline{f_i}\otimes \cdots \otimes \overline{a_ja_{j+1}}\otimes \cdots \otimes
\overline{a_{m+1}}))\\
&+ \sum_i (-1)^{|f_i||f| +(|f_i|-1)k+ \epsilon_m}  e_i \blacktriangleright(\varepsilon \otimes \id^{\otimes p}) (f(\overline{f_i}\otimes \overline{a_1}\otimes \cdots \otimes
\overline{a_{m}})a_{m+1}).
\end{align*}

Thus, we may cancel the common terms to obtain:
\begin{align}
\label{align7}
&(\pi_{*, *}\circ \delta^h-\delta^h\circ \pi_{*, *})(f)(\overline{a_1}\otimes \cdots \otimes \overline{a_{m+1}})
\\
={}&  -\sum_i (-1)^{(|f_i|-1)(k+1)} e_i \blacktriangleright(\varepsilon \otimes \id^{\otimes p}) (f_i\blacktriangleright f ( \overline{a_1}\otimes \cdots \otimes
\overline{a_{m+1}})) \nonumber\\
&-\sum_i  (-1)^{|f_i||f|+(|f_i|-1)k}e_i \blacktriangleright(\varepsilon \otimes \id^{\otimes p}) (f(\overline{f_ia_1}\otimes  \cdots \otimes \overline{a_{m+1}}))\nonumber\\
& +\sum_i (-1)^{(|a_1|+|f_i|)|f|+(|f_i|-1)k} a_1\blacktriangleright( e_i \blacktriangleright(\varepsilon \otimes \id^{\otimes p}) (f(\overline{f_i}\otimes \overline{a_2}\otimes \cdots \otimes
\overline{a_{m+1}}))).\nonumber
\end{align}
 It  follows from  Lemma  \ref{lemma:bascipropertyeifi1} (3) that  the first sum on the right hand side of the equality in (\ref{align7}) vanishes.
Since $\blacktriangleright$ defines a left action of $A$ on $\Omega_{\nc}^p(A)$, we have 
\begin{align*}
a_1\blacktriangleright( e_i \blacktriangleright(\varepsilon \otimes \id^{\otimes p}) (f(\overline{f_i}\otimes \overline{a_2}\otimes \cdots \otimes\overline{a_{m+1}})))= (a_1 e_i) \blacktriangleright(\varepsilon \otimes \id^{\otimes p}) (f(\overline{f_i}\otimes \overline{a_2}\otimes \cdots \otimes\overline{a_{m+1}})),
\end{align*}
thus the second sum on the right hand side in (\ref{align7}) cancels with the third sum since $\sum a_1e_i\otimes f_i=\sum (-1)^{|a_1|k} e_i \otimes f_i a_1$.  
Therefore, $(\pi_{*, *}\circ \delta^h-\delta^h\circ \pi_{*, *})(f)=0$.  Similarly, we may check that $\pi_{>0, *}$ is compatible with the internal differential $\delta^v$, which we leave to the reader.

We now check that $\pi_{0, *}$ are compatible with the external differentials $\delta^h$ and $\partial_h$. Namely, that the following diagram commutes for any $p\in \Z_{> 0}$:
\begin{equation}\label{equation-diagram2}
\xymatrix{
C^{0, *}(A, \Omega_{\nc}^p(A))\ar[d]^-{\delta^h} \ar[r]^-{\pi_{0, p}}  & s^{1-k} C_{-(p-1), *}(A, A)\ar[rd]^-{(-1)^{1-k}\partial_h}\\
C^{1, *}(A, \Omega_{\nc}^p(A)) \ar[r]^-{\pi_{1, p}} & C^{0, *}(A, \Omega_{\nc}^{p-1}(A)) \ar[r]^-{\pi_{0, p-1}}& s^{1-k} C_{-(p-2), *}(A, A).
}
\end{equation}
The commutativity of diagram (\ref{equation-diagram2}) follows since
\begin{align*}
&\pi_{0, p-1}\circ \pi_{1, p}\circ \delta^h(\overline{a_1}\otimes \cdots \otimes \overline{a_p}\otimes a_{p+1})\\
={}&- \sum_i (-1)^{(|f_i|-1)(k-1)} \pi_{0, p-1}( e_i  \blacktriangleright (\varepsilon\otimes \id^{\otimes p}) (f_i \blacktriangleright (\overline{a_1}\otimes \dotsb\otimes \overline{a_p} \otimes a_{p+1})))\\
& + \sum_i (-1)^{(|f_i|-1)(|\alpha|+k+1) + |\alpha|} \varepsilon(a_1)  \pi_{0, p-1} ( e_i \blacktriangleright(\overline{a_2}\otimes \dotsb\otimes \overline{a_p} \otimes a_{p+1}f_i))\\
={}&\sum_i (-1)^{(|f_i|-1)(|\alpha|+k+1) + |\alpha|} \varepsilon(a_1) \pi_{0, p-1}  ( e_i \blacktriangleright(\overline{a_2}\otimes \dotsb\otimes \overline{a_p} \otimes a_{p+1}f_i))\\
={} &(-1)^{1-k} \partial_h\circ \pi_{0, p}(\overline{a_1} \otimes \dotsb\otimes \overline{a_{p+1}})
\end{align*}
where the second identity follows from Lemma \ref{lemma:bascipropertyeifi1} (3) and the third identity from Lemma \ref{lemma:bascipropertyeifi1} (1). Similarly, we may check that $\pi_{0, *}$ are compatible with the internal differentials.
\end{proof}

\begin{lemma}\label{lemma5.3}
 We have the following identities 
\begin{flalign*}
&&\pi_{m+1, p+1}\circ \theta_{m, p}&=\id &&\text{for $m, p \in \Z_{\geq 0}$}&&\\
&&\pi_{0, p+1}\circ \iota&=\id && \text{for $ p\in \Z_{\geq  0}$}.&&
\end{flalign*} 
\end{lemma}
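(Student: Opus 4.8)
The plan is to verify these two identities by direct computation, unwinding all the relevant definitions. Both are purely formal consequences of the defining formulas for $\theta_{m,p}$ (equation \eqref{equation-definition-theta}), $\pi_{m+1,p+1}$, $\pi_{0,p+1}$, and $\iota$, together with the basic identities in Lemma \ref{lemma:bascipropertyeifi1}.

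\textbf{First identity: $\pi_{m+1,p+1}\circ\theta_{m,p}=\id$ for $m,p\in\Z_{\geq 0}$.} I would treat the case $p>0$ and $p=0$ separately, though they run in parallel. For $p>0$, take $f\in C^{m,*}(A,\Omega_{\nc}^p(A))$ and apply $\theta_{m,p}$ to get, by \eqref{equation-definition-theta},
\[
\theta_{m,p}(f)(\overline{a_1}\otimes\dotsb\otimes\overline{a_{m+1}})=(-1)^{(|a_1|-1)|f|}\overline{a_1}\otimes f(\overline{a_2}\otimes\dotsb\otimes\overline{a_{m+1}}).
\]
Now apply $\pi_{m+1,p+1}$; by its defining formula this substitutes $\overline{f_i}$ in the first slot and then applies $e_i\blacktriangleright(\varepsilon\otimes\id^{\otimes p+1})$. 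The slot-$1$ entry of $\theta_{m,p}(f)(\overline{f_i}\otimes\overline{a_1}\otimes\dotsb\otimes\overline{a_m})$ is $\overline{f_i}$, so after applying $\varepsilon\otimes\id^{\otimes p+1}$ we are left with $\varepsilon(\overline{f_i})$ times $e_i\blacktriangleright$ applied to $f(\overline{a_1}\otimes\dotsb\otimes\overline{a_m})$. Summing over $i$, the key point is that $\sum_i \varepsilon(\overline{f_i})\,e_i\blacktriangleright(-)=\sum_i\langle f_i,1\rangle\, e_i\blacktriangleright(-)$ collapses to the identity action by Lemma \ref{lemma:bascipropertyeifi}(1): $\sum_i \langle f_i,1\rangle e_i = (-1)^{0}\cdot 1$ (up to the relevant sign, since $1\in A^0$). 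One must check the accumulated signs — the $(-1)^{(|a_1|-1)|f|}$ from $\theta$, the $(-1)^{(|f_i|-1)(|f|+k)}$ from $\pi_{m+1,p+1}$, and the Koszul sign from moving $\varepsilon$ past $e_i$ — cancel precisely, using that $\varepsilon(\overline{f_i})=0$ unless $|f_i|=k$, i.e. $|e_i|=0$, which forces $e_i$ into degree zero and kills most sign contributions. For $p=0$ one uses $\pi_{m+1,1}$ in the same way, landing in $C^{m,*}(A,A)$; the argument is identical since $\Omega_{\nc}^0(A)=A$ and the $\blacktriangleright$-action is just left multiplication by the degree-zero element $\sum_i\langle f_i,1\rangle e_i=1$.

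\textbf{Second identity: $\pi_{0,p+1}\circ\iota=\id$ for $p\in\Z_{\geq 0}$.} Here $\iota$ is the map of Definition \ref{definitioniota}: for $\alpha=\overline{a_1}\otimes\dotsb\otimes\overline{a_p}\otimes a_{p+1}\in C_*(A,A)$,
\[
\iota(\alpha)(1_{\mathbb K})=\sum_i(-1)^{|f_i||\alpha|}\overline{e_i}\otimes\overline{a_1}\otimes\dotsb\otimes\overline{a_p}\otimes a_{p+1}f_i\in\Omega_{\nc}^{p+1}(A).
\]
Now apply $\pi_{0,p+1}$, whose formula on $g\in C^{0,*}(A,\Omega_{\nc}^{p+1}(A))$ with $g(1)=\overline{c_1}\otimes\dotsb\otimes\overline{c_{p+1}}\otimes c_{p+2}$ is $(-1)^k\varepsilon(\overline{c_1})\,\overline{c_2}\otimes\dotsb\otimes\overline{c_{p+1}}\otimes c_{p+2}$. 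Applying this with $c_1=e_i$, we get $\sum_i(-1)^{|f_i||\alpha|+k}\varepsilon(\overline{e_i})\,\overline{a_1}\otimes\dotsb\otimes\overline{a_p}\otimes a_{p+1}f_i$, and $\varepsilon(\overline{e_i})=\langle e_i,1\rangle$. By Lemma \ref{lemma:bascipropertyeifi}(1), $\sum_i(-1)^{k-|f_i|}\langle e_i,1\rangle f_i = 1$, equivalently $(-1)^k\sum_i\langle e_i,1\rangle f_i=1$ since $\langle e_i,1\rangle\neq 0$ only when $|f_i|=k$ hence $|e_i|=0$ and $|f_i||\alpha|\equiv 0$. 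Substituting back, $\sum_i(-1)^{|f_i||\alpha|+k}\langle e_i,1\rangle\,a_{p+1}f_i = a_{p+1}\cdot\big((-1)^k\sum_i\langle e_i,1\rangle f_i\big)=a_{p+1}$, which recovers $\alpha$ exactly. (When $p=0$, one interprets $\iota$ on $A=C_{0,*}(A,A)$ the same way, landing in $\Omega_{\nc}^1(A)$, and the computation is the $p=0$ instance of the above.)

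\textbf{Main obstacle.} No conceptual difficulty arises; the entire content is sign bookkeeping. The one place where care is genuinely needed is confirming that all the exponent contributions — $|f_i||\alpha|$, the shift signs $(-1)^{1-k}$ hidden in the identification of $s^{1-k}C_*(A,A)$, the $(-1)^{(|f_i|-1)(|f|+k)}$ and $(-1)^k$ prefactors in the definitions of $\pi_{m+1,p+1}$ and $\pi_{0,p+1}$, and the Koszul signs from commuting $\varepsilon$ (a degree $1-k$ map) past $e_i$ — combine to exactly $+1$ once the constraint $|e_i|=0$, $|f_i|=k$ coming from $\varepsilon(\overline{e_i})\neq 0$ or $\varepsilon(\overline{f_i})\neq 0$ is imposed. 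I expect the verification to be a few lines in each case once one writes out the signs explicitly; it is precisely the kind of routine computation that the paper elsewhere leaves to the reader, so I would present it compactly.
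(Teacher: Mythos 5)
Your proposal is correct and follows essentially the same route as the paper's proof: unwind the definitions of $\theta_{m,p}$, $\pi_{m+1,p+1}$, $\pi_{0,p+1}$ and $\iota$, and collapse the sum over the Casimir element using Lemma \ref{lemma:bascipropertyeifi}(1) with $a=1$, i.e.\ $\sum_i\varepsilon(\overline{f_i})e_i=1$ for the first identity and $(-1)^k\sum_i\varepsilon(\overline{e_i})f_i=1$ for the second. One small slip: in the second identity the nonvanishing of $\varepsilon(\overline{e_i})=\langle e_i,1\rangle$ forces $|e_i|=k$ and $|f_i|=0$ (not $|f_i|=k$, $|e_i|=0$ as you wrote), and it is precisely $|f_i|=0$ that gives $(-1)^{|f_i||\alpha|}=1$, so the computation goes through as intended.
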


\begin{proof}
Recall that $\theta_{m, p}$ is defined in (\ref{equation-definition-theta}). 
We have
\begin{align*}
&\pi_{m+1, p+1}\circ \theta_{m, p}(f) (\overline{a_1}\otimes\cdots \otimes  \overline{a_{m}})\\
={}&\sum_i (-1)^{(|f_i|-1)(|f|+k)} e_i \blacktriangleright(\varepsilon \otimes \id^{\otimes p+1}) (\theta_{m, p}(f) (\overline{f_i}\otimes \overline{a_1}\otimes\cdots \otimes  \overline{a_{m}}))
\\={} &\sum_i  (-1)^{(|f_i|-1)k} e_i \blacktriangleright(  \varepsilon(\overline{f_i}) f(\overline{a_1}\otimes \cdots \otimes
\overline{a_{m-1}}))\\
={}&f(\overline{a_1}\otimes \cdots\otimes \overline{a_{m-1}}),
\end{align*}
where the last identity follows from the fact that $\sum_i   e_i  \varepsilon(\overline{f_i}) =\sum_i e_i \langle f_i, 1\rangle =  1$; see Lemma \ref{lemma:bascipropertyeifi}. 

Similarly, let $\alpha=\overline{a_1}\otimes \cdots \otimes \overline{a_{p}}\otimes a_{p+1} \in C_{-p, *}(A, A),$ then we have 
\begin{equation*}
\begin{split}
\pi_{0, p+1}\circ \iota(\overline{a_1}\otimes \cdots \otimes \overline{a_{p}}\otimes a_{p+1})& =\sum_i  (-1)^{|f_i||\alpha|}\pi_{0, p+1} (\overline{e_i}\otimes
\overline{a_1}\otimes \cdots \otimes \overline{a_{p}}\otimes a_{p+1} f_i)\\
&=\sum_i (-1)^{|f_i||\alpha| + k} \varepsilon(\overline{e_i}) \overline{a_1}\otimes \cdots \overline{a_{p}} \otimes a_{p+1}f_i\\
&=\overline{a_1}\otimes \cdots \otimes \overline{a_{p}}\otimes a_{p+1},
\end{split}
\end{equation*}
where the last identity follows from Lemma \ref{lemma:bascipropertyeifi}.
\end{proof}

\begin{definition}
\label{defn-pi}
Define $\Pi\colon \calC^*_{\sg}(A, A)\rightarrow \calD^*(A, A)$ as follows. For an  element $\widetilde f \in \calC^*_{\sg}(A, A)$ represented by  $f\in C^{m, *}(A, \Omega_{\nc}^p(A))$, let
\begin{equation*}
\Pi(\widetilde f) := 
\begin{cases}
\pi_{m-p, 0} \circ \cdots\circ \pi_{m, p}(f) & \emph{if $m-p\geq 0$},\\
\pi_{0, p-m}\circ \pi_{1, m-p+1}\circ\cdots \circ \pi_{m, p}(f) & \emph{if $m-p<0$}.
\end{cases}
\end{equation*}
Lemma \ref{lemma5.3} implies that this is indeed well-defined, namely, $\Pi$ does not depend on the representative of $\widetilde{f}$.  Moreover, it follows from Lemmas \ref{lemma5.2} and  \ref{lemma5.3} that $\Pi$ is a morphism of cochain complexes such  that $\Pi\circ \iota=\id$. 
\end{definition}

\subsection{The chain homotopy $H$ and Theorem \ref{thm:homtopyretract}} We shall now define the chain homotopy  $H \colon \calC_{\sg}^*(A, A)\rightarrow \calC_{\sg}^{*-1}(A, A)$. Suppose $m, p\in \Z_{>0}$. Given any  $f\in C^{m, *}(A, (\sA)^{\otimes p}\otimes A) $ define  $$h_{m, p}(f) \in C^{m-1, *}(A, (\sA)^{\otimes p}\otimes A)$$ by
$$\!\!\!\! h_{m,p}(f) (\overline{a_1}\otimes \cdots \otimes \overline{a_{m-1}}) =  \sum_{i}(-1)^{(|f_i|-1)(|f|+k)}  \overline{e_i}\otimes (\varepsilon\otimes \id^{\otimes p}) (f(\overline{f_i}\otimes  \overline{a_1}\otimes \cdots \otimes \overline{a_{m-1}})).$$
We also define $h_{m, p}:=0$ if either $m\leq 0$ or $p\leq 0$. Note that $h_{m,p}$ is of degree $-1$. It follows from the identity $\sum_i e_i \varepsilon (f_i) = 1$, that  $h_{m, p} \circ \theta_{m-1, p-1} =0$.

\begin{lemma}\label{lemma:pimp}
For any $p >0$ we have the following identities 
\begin{equation}\label{equation5.6}
\delta\circ h_{m, p}+h_{m+1, p} \circ\delta =\begin{cases}
\id-\theta_{m-1, p-1}\circ \pi_{m,p} & \text{if $m >0$}\\
\id - \iota \circ \pi_{0, p} & \text{if $m =0$.}
\end{cases}
\end{equation}

\end{lemma}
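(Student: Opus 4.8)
The plan is to verify \eqref{equation5.6} by a direct computation, reading the left-hand side as $\delta_{\sg}\circ H + H\circ\delta_{\sg}$ restricted to $C^{m,*}(A,\Omega_{\nc}^p(A))$, and splitting the Hochschild differential $\delta=\delta^v+\delta^h$ into its internal and external parts. The internal part contributes nothing new: one checks that the term of $\delta^v(h_{m,p}(f))$ in which $d$ hits the leading factor $\overline{e_i}$ can be traded, via $\sum_i de_i\otimes f_i=-\sum_i(-1)^{|e_i|}e_i\otimes df_i$ (Lemma \ref{lemma:bascipropertyeifi}(3)), against the term of $h_{m,p}(\delta^v(f))$ in which $d$ hits the inserted argument $\overline{f_i}$, while all remaining internal terms (with $d$ on some $a_j$ or on the coefficient slot) match directly; here one also uses that $\varepsilon$ is a cocycle, $\varepsilon(\overline{da})=\langle da,1\rangle=-(-1)^{|a|}\langle a,d1\rangle=0$ since $d1=0$. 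This is entirely parallel to the internal-differential check deferred in the proof of Lemma \ref{lemma5.2}, so I would indicate it and focus on $\delta^h$. For $m=0$ there is nothing to check internally since $h_{0,p}=0$.

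The substance is the external differential. For $m>0$ I would expand both $\delta^h(h_{m,p}(f))$ and $h_{m+1,p}(\delta^h(f))$ on a generator $\overline{a_1}\otimes\cdots\otimes\overline{a_m}$, using the formula for $\delta^h$ and the definition of $h$. In $\delta^h(f)(\overline{f_i}\otimes\overline{a_1}\otimes\cdots\otimes\overline{a_m})$ the three types of terms behave, after applying $(\varepsilon\otimes\id^{\otimes p})$ and $\overline{e_i}\otimes(-)$ and summing over $i$, as follows: the front term $f_i\blacktriangleright f(\overline{a_1}\otimes\cdots\otimes\overline{a_m})$ collapses by Lemma \ref{lemma:bascipropertyeifi1}(2) to $+f(\overline{a_1}\otimes\cdots\otimes\overline{a_m})$, producing the $\id$ term; the term contracting $f_i$ with $a_1$ cancels, using $\sum_i a_1e_i\otimes f_i=(-1)^{|a_1|k}\sum_i e_i\otimes f_ia_1$ (Lemma \ref{lemma:bascipropertyeifi}(4)) and the identities of Remark \ref{remark:usefulidentities}, against the ``$\overline{a_1e_i}$-part'' of the term $a_1\blacktriangleright h_{m,p}(f)(\overline{a_2}\otimes\cdots\otimes\overline{a_m})$ occurring in $\delta^h(h_{m,p}(f))$; and the interior contractions $a_ja_{j+1}$ and the right multiplication by $a_m$ cancel termwise against the corresponding terms of $\delta^h(h_{m,p}(f))$. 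The only survivor is the ``$\overline{a_1}\otimes(e_i\blacktriangleright-)$-part'' of $a_1\blacktriangleright h_{m,p}(f)(\overline{a_2}\otimes\cdots\otimes\overline{a_m})$, which, using that $\kappa_{1,p-1}$ is a bimodule map (so $a_1\blacktriangleright(\overline{e_i}\otimes Z)=(-1)^{|a_1|}\overline{a_1e_i}\otimes Z-(-1)^{|a_1|}\overline{a_1}\otimes(e_i\blacktriangleright Z)$, cf.\ Remark \ref{differentialform}) together with the definitions of $\pi_{m,p}$ and $\theta_{m-1,p-1}$, is exactly $-\theta_{m-1,p-1}(\pi_{m,p}(f))$. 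For $m=0$ the analogous computation of $h_{1,p}(\delta^h(f))$ on $f\in C^{0,*}(A,\Omega_{\nc}^p(A))$ has only two terms: the $f_i\blacktriangleright f(1)$ term gives $+f$ by Lemma \ref{lemma:bascipropertyeifi1}(2), and the right-multiplication term $f(1)\cdot f_i$, after applying $(\varepsilon\otimes\id^{\otimes p})$, is precisely $-\iota(\pi_{0,p}(f))$ by the definitions of $\pi_{0,p}$ and of $\iota$ (Definition \ref{definitioniota}).

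The argument is elementary but sign-heavy, and I expect the main obstacle to be the Koszul-sign bookkeeping: one must check that the signs produced by Lemma \ref{lemma:bascipropertyeifi1}(2) and by the shifts $s^{1-k}$ assemble correctly into $+\id$ and $-\theta_{m-1,p-1}\circ\pi_{m,p}$ (respectively $-\iota\circ\pi_{0,p}$). I would organize these computations as in the proofs of Lemma \ref{lemma:appendix} and Lemma \ref{lemma5.2}, grouping terms by the shape of the output and reducing every sign to a product of the basic exponents $|f_i|$, $|a_j|$, $|f|$ and $k$ (for instance, one checks $(|f_i|-1)(|\delta^h f|+k)+(|f_i|-1)|f|\equiv(|f_i|-1)(1-k)\pmod 2$ to apply Lemma \ref{lemma:bascipropertyeifi1}(2), and a similar congruence to match $\overline{a_1}\otimes\pi_{m,p}(f)(\overline{a_2}\otimes\cdots)$ with $\theta_{m-1,p-1}(\pi_{m,p}(f))$). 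Finally, the fact that the $h_{m,p}$ glue to a single map $H$ on $\calC_{\sg}^*(A,A)$ compatible with the colimit follows from $h_{m,p}\circ\theta_{m-1,p-1}=0$, already noted before the lemma.
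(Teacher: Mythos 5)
Your proposal is correct and follows essentially the same route as the paper's own proof: the internal part is disposed of by the $de_i\otimes f_i$ identity exactly as in Lemma \ref{lemma5.2}, and the external part is handled by expanding $\delta^h\circ h_{m,p}+h_{m+1,p}\circ\delta^h$, cancelling the interior-contraction and right-multiplication terms, extracting $\id$ from the front-action term via Lemma \ref{lemma:bascipropertyeifi1}(2), and matching the surviving $\overline{a_1}\otimes(e_i\blacktriangleright-)$ piece with $-\theta_{m-1,p-1}\circ\pi_{m,p}$ (resp.\ $-\iota\circ\pi_{0,p}$ for $m=0$), just as the paper does. The only quibble is the sign in your parenthetical decomposition of $a_1\blacktriangleright(\overline{e_i}\otimes Z)$, whose second term should carry $(-1)^{|a_1|+|e_i|}$ rather than $-(-1)^{|a_1|}$; this is a bookkeeping slip of the kind you already flag and does not affect the structure of the argument.
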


\begin{proof}
By a similar computation in the proof of Lemma \ref{lemma5.2}, we can show that $h_{*, *}$ commutes with the internal differentials (i.e. $\delta^v\circ h_{*, *} + h_{*, *} \circ \delta^v = 0$). 
Thus, it is sufficient to 
prove that we have the following homotopy diagram
\begin{equation*}
\xymatrix{
0\ar[r] & C^{0,*}(A, \Omega_{\nc}^p(A)) \ar[r]^-{\delta^h} \ar[d]_-{\id-\iota\pi}& C^{1, *}(A, \Omega_{\nc}^p(A)) \ar[dl]_-{h_{1, p}}\ar[d]^-{\id-\theta\pi}\ar[r]^-{\delta^h}& C^{2, *}A,\Omega_{\nc}^p(A))\ar[r]^-{\delta^h} \ar[dl]_-{h_{2, p}}\ar[d]^-{\id-\theta\pi}& \cdots \\
0\ar[r] & C^{0,*}(A, \Omega_{\nc}^p(A)) \ar[r]_-{\delta^h} & C^{1,*}(A, \Omega_{\nc}^p(A))\ar[r]_-{\delta^h} & C^{2,*}(A,\Omega_{\nc}^p(A))\ar[r]_-{\delta^h} & \cdots \\
}
\end{equation*}
For any $x=\overline{a_1}\otimes\cdots \otimes \overline{a_p}\otimes a_{p+1}\in C^{0,*}(A, (\sA)^{\otimes p}\otimes A),$ we have \begin{equation*}
\begin{split}
(\id-\iota\circ \pi_{0, p})(x)={} &x-\sum_{i} (-1)^{|f_i|(|x|-|a_1|-1)+ k} \overline{e_i}\otimes\varepsilon(\overline{a_1})\overline{a_2}\otimes \cdots\otimes \overline{a_p}\otimes a_{p+1}f_i
\end{split}
\end{equation*}
and 
\begin{equation*}
\begin{split}
 h_{1, p}\circ \delta^h(x)={}& \sum_i (-1)^{(|f_i|-1)(|x|+k+1)} \overline{e_i}\otimes (\varepsilon\otimes \id^{\otimes p})(\delta^h(x)(\overline{f_i}))\\
={}& -\sum_i (-1)^{(|f_i|-1)(k+1)} \overline{e_i}\otimes (\varepsilon\otimes \id^{\otimes p})(f_i \blacktriangleright x) \\
& +  \sum_i(-1)^{(|f_i|-1)(|x|+k+1)+|x|}\overline{e_i}\otimes 
(\varepsilon\otimes \id^{\otimes p})(x\overline{f_i})\\
={}& x-\sum_{i} (-1)^{|f_i|(|x|-|a_1|-1)+k} \overline{e_i}\otimes\varepsilon(\overline{a_1})\overline{a_2}\otimes \cdots\otimes \overline{a_p}\otimes a_{p+1}f_i,
\end{split}
\end{equation*}
where, in the third identity, we use Lemma \ref{lemma:bascipropertyeifi1} (2) and $\varepsilon(\overline{a_1}) = 0$ if $|a_1| \neq k$.  

Similarly, for $m>0$ we have  
\begin{equation*}
\begin{split}
&(\id-\theta_{m-1,p-1}\circ \pi_{m, p})(f)(\overline{a_1}\otimes \cdots \otimes \overline{a_m})\\
={} &f(\overline{a_1}\otimes \cdots \otimes \overline{a_m})-\sum_i (-1)^{|f|(|a_1|-1) +(|f_i|-1)(|f|+k)} \overline{a_1}\otimes e_i\blacktriangleright
(\varepsilon\otimes \id^{\otimes p} )(f(\overline{f_i}\otimes\overline{a_2}\otimes \cdots \otimes \overline{a_m})).
\end{split}
\end{equation*}
On the other hand, we have 
\begin{align*}
&\delta^h\circ h_{m, p}(f) (\overline{a_1}\otimes \cdots \otimes \overline{a_m}) \\
=& -\sum_i (-1)^{(|a_1|-1)(|f|-1)+(|f_i|-1)(|f|+k)} {a_1}\blacktriangleright (\overline{e_i}\otimes (\varepsilon\otimes \id^{\otimes p} )(f(\overline{f_i}\otimes \overline{a_2}\otimes \cdots \otimes \overline{a_m})))\\
&- \sum_i\sum_{j=1}^{m-1} (-1)^{(|f_i|-1)(|f|+k) +|f|+1 + \epsilon_j} \overline{e_i}\otimes (\varepsilon\otimes \id^{\otimes p} )(f(\overline{f_i}\otimes \cdots \otimes \overline{a_ja_{j+1}}\otimes \cdots \otimes \overline{a_m}))\\
&+ \sum_i (-1)^{(|f_i|-1)(|f|+k) +|f|+1  + \epsilon_{m-1}} \overline{e_i}\otimes (\varepsilon\otimes \id^{\otimes p} )(f(\overline{f_i}\otimes \overline{a_1}\otimes \cdots \otimes \overline{a_{m-1}})a_m)
\end{align*}
and 
\begin{align*}
&h_{m+1, p}\circ \delta^h(f)(\overline{a_1}\otimes \cdots \otimes \overline{a_m})\\
={}& -\sum_i (-1)^{(|f_i|-1)(k-1)} \overline{e_i}\otimes (\varepsilon\otimes \id^{\otimes p})(f_i\blacktriangleright f( \overline{a_1}\otimes \cdots \otimes
\overline{a_{m}}))\\
&-\sum_i  (-1)^{|f_i||f|+(|f_i|-1)k} \overline{e_i}\otimes (\varepsilon\otimes \id^{\otimes p})(f(\overline{f_ia_1}\otimes \overline{a_2}\otimes \cdots \otimes \overline{a_{m}}))\\
&-\sum_{j=1}^{m-1}\sum_i (-1)^{|f_i||f|+(|f_i|-1)k+ \epsilon_j}  \overline{e_i}\otimes (\varepsilon\otimes \id^{\otimes p})(f(\overline{f_i}\otimes \cdots \otimes\overline{a_ja_{j+1}}\otimes \cdots \otimes \overline{a_{m}}))\\
&+\sum_i (-1)^{|f_i||f| +(|f_i|-1)k+ \epsilon_{m-1}} \overline{e_i}\otimes (\varepsilon\otimes \id^{\otimes p})(f(\overline{f_i}\otimes \overline{a_1}\otimes \cdots \otimes \overline{a_{m-1}})a_m),
\end{align*}
where $\epsilon_j = |a_1|+\dotsb+|a_j|-j$.
After  cancelling the common  terms in the above two identities, we obtain  
\begin{equation*}
\begin{split}
&(\delta^h\circ h_{m,p}+h_{m+1, p}\circ \delta^h)(f)(\overline{a_1}\otimes \cdots \otimes \overline{a_m})\\
={}& -\sum_i (-1)^{(|a_1|-1)(|f|-1)+(|f_i|-1)(|f|+k)} {a_1}\blacktriangleright (\overline{e_i}\otimes (\varepsilon\otimes \id^{\otimes p} )(f(\overline{f_i}\otimes \overline{a_2}\otimes \cdots \otimes \overline{a_m})))\\
&-\sum_i (-1)^{(|f_i|-1)(k-1)} \overline{e_i}\otimes (\varepsilon\otimes \id^{\otimes p})(f_i\blacktriangleright f( \overline{a_1}\otimes \cdots \otimes
\overline{a_{m}}))\\
&-\sum_i  (-1)^{|f_i||f|+(|f_i|-1)k} \overline{e_i}\otimes (\varepsilon\otimes \id^{\otimes p})(f(\overline{f_ia_1}\otimes \overline{a_2}\otimes \cdots \otimes \overline{a_{m}}))\\
={}&f(\overline{a_1}\otimes \cdots \otimes \overline{a_m})-\sum_i (-1)^{|f|(|a_1|-1) +(|f_i|-1)(|f|+k)} \overline{a_1}\otimes e_i\blacktriangleright
(\varepsilon\otimes \id^{\otimes p} )(f(\overline{f_i}\otimes\overline{a_2}\otimes \cdots \otimes \overline{a_m}))
\end{split}
\end{equation*} 
where in the second equality we use  Lemma \ref{lemma:bascipropertyeifi1} (2).
\end{proof}

For $m, p\in\Z_{>0}$, define $$H_{m, p}\colon C^{m, *}(A, \Omega_{\nc}^p(A))\rightarrow C^{m-1, *}(A, \Omega_{\nc}^p(A))$$ as the composition  $$H_{m, p}:=  \sum_{i=0}^{\min\{p-1,m-1\}}\theta_{m-2, p-1} \circ \cdots\circ \theta_{m-i-1, p-i} \circ h_{m-i, p-i}\circ  \pi_{m-i+1, p-i+1}\circ \cdots \circ \pi_{m, p},$$ 
where the term for $i = 0$ is $h_{m, p}$. For convenience, we set $H_{m, p}=0$ if either $m = 0$ or $p = 0$.

\begin{lemma}
\label{lemma-h}
For $m, p\in \Z_{>0}$ the following diagram commutes
\begin{equation*}
\xymatrix@C=3pc@R=2pc{
C^{m-1, *}(A, \Omega_{\nc}^{p-1}(A))\ar[d]_-{ H_{m-1, p-1}}\ar[r]^-{\theta_{m-1, p-1}}  & C^{m, *}(A, \Omega_{\nc}^p(A))\ar[d]^-{ H_{m, p}}\\
C^{m-2, *}(A, \Omega_{\nc}^{p-1}(A)) \ar[r]^-{\theta_{m-2, p-1}}  & C^{m-1, *}(A, \Omega_{\nc}^p(A)).
}
\end{equation*}
\end{lemma}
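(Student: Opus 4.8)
The plan is a purely formal index-bookkeeping argument: the two facts already established, namely the retraction identity $\pi_{m+1,p+1}\circ\theta_{m,p}=\id$ from Lemma \ref{lemma5.3} (equivalently $\pi_{m,p}\circ\theta_{m-1,p-1}=\id$ for $m,p>0$) and the cancellation $h_{m,p}\circ\theta_{m-1,p-1}=0$ (observed right after the definition of $h_{m,p}$, from $\sum_i e_i\varepsilon(f_i)=1$), are exactly what is needed. Write $N:=\min\{p-1,m-1\}$ and, for $0\le i\le N$, let
\[
T_i:=\theta_{m-2,p-1}\circ\cdots\circ\theta_{m-i-1,p-i}\circ h_{m-i,p-i}\circ \pi_{m-i+1,p-i+1}\circ\cdots\circ\pi_{m,p},
\]
so that $H_{m,p}=\sum_{i=0}^N T_i$ with $T_0=h_{m,p}$. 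Similarly denote by $T_j'$ the $j$-th summand of $H_{m-1,p-1}$, so $H_{m-1,p-1}=\sum_{j=0}^{N-1}T_j'$, where I use the elementary identity $\min\{p-1,m-1\}-1=\min\{p-2,m-2\}$ to see the upper index is $N-1$.

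The key step is to precompose each $T_i$ with $\theta_{m-1,p-1}$. For $i=0$ one gets $T_0\circ\theta_{m-1,p-1}=h_{m,p}\circ\theta_{m-1,p-1}=0$. For $1\le i\le N$, the composition ends in $\pi_{m,p}\circ\theta_{m-1,p-1}$, which equals $\id$ by Lemma \ref{lemma5.3}; after this cancellation the surviving composition is
\[
\theta_{m-2,p-1}\circ\cdots\circ\theta_{m-i-1,p-i}\circ h_{m-i,p-i}\circ \pi_{m-i+1,p-i+1}\circ\cdots\circ\pi_{m-1,p-1},
\]
and under the substitution $m\mapsto m-1$, $p\mapsto p-1$, $i\mapsto i-1$ one checks factor by factor that the remaining $\pi$-chain, the middle $h$, and the $\theta$-chain past the leading $\theta_{m-2,p-1}$ are precisely those of $T_{i-1}'$. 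Hence $T_i\circ\theta_{m-1,p-1}=\theta_{m-2,p-1}\circ T_{i-1}'$ for $1\le i\le N$. Summing,
\[
H_{m,p}\circ\theta_{m-1,p-1}=\sum_{i=1}^{N}\theta_{m-2,p-1}\circ T_{i-1}'=\theta_{m-2,p-1}\circ\sum_{j=0}^{N-1}T_j'=\theta_{m-2,p-1}\circ H_{m-1,p-1},
\]
which is the asserted commutativity. The degenerate cases $m=1$ or $p=1$ (where $N=0$) are handled uniformly: the sum $\sum_{i=1}^N$ is empty and $H_{m-1,p-1}=0$ (empty sum), so both sides vanish; in the remaining cases $m,p\ge 2$, so every index that occurs above is positive and the two cited identities apply verbatim.

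I do not expect a genuine obstacle here — there is no analytic or sign subtlety, since the whole content is that precomposition with $\theta_{m-1,p-1}$ turns the $i$-th summand of $H_{m,p}$ into $\theta_{m-2,p-1}$ applied to the $(i-1)$-st summand of $H_{m-1,p-1}$ while killing the $i=0$ term. The only points requiring a moment of care are confirming the applicability of $\pi_{m,p}\circ\theta_{m-1,p-1}=\id$ (which needs $m,p>0$, as assumed) and the bookkeeping identity $\min\{p-1,m-1\}-1=\min\{p-2,m-2\}$ that aligns the two summation ranges; both are immediate.
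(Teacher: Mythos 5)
Your proposal is correct and is essentially the paper's own proof: the paper likewise precomposes the defining sum for $H_{m,p}$ with $\theta_{m-1,p-1}$, kills the $i=0$ summand via $h_{m,p}\circ\theta_{m-1,p-1}=0$, cancels the trailing $\pi_{m,p}\circ\theta_{m-1,p-1}=\id$ using Lemma \ref{lemma5.3}, and re-indexes to get $\theta_{m-2,p-1}\circ H_{m-1,p-1}$. You have merely made the re-indexing and the degenerate cases $m=1$ or $p=1$ explicit, which the paper leaves implicit.
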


\begin{proof}
We have that
\begin{align*}
&H_{m, p}\circ \theta_{m-1, p-1}\\
={}& \sum_{i=0}^{\min\{p-1,m-1\}}\theta_{m-2, p-1} \circ \cdots\circ \theta_{m-i-1, p-i} \circ h_{m-i, p-i}\circ  \pi_{m-i+1, p-i+1}\circ \cdots \circ \pi_{m, p}\circ \theta_{m-1, p-1}\\
={}&\theta_{m-2, p-1}\circ H_{m-1, p-1},
\end{align*}
where the second identity follows from Lemma \ref{lemma5.3} and the fact that $h_{m, p} \circ \theta_{m-1, p-1} = 0$. 
 \end{proof}
The above lemma allows us to make the following definition. 
\begin{definition}
\label{defn-H}
Define $H\colon \calC_{\sg}^*(A, A) \rightarrow  \calC_{\sg}^{*-1}(A, A)$ to be the map induced by the maps $H_{m,p}$ after passing to the colimit. That is, for any $\widetilde f \in \calC_{\sg}^*(A, A)$ which is represented by $f \in C^{m, *}(A, \Omega_{\nc}^p(A))$ for some $m, p$, we define
$$
H(\widetilde f) = H_{m, p}(f) \in \calC_{\sg}^{*-1}(A, A).
$$ 
\end{definition}

\begin{proof}[Proof of Theorem \ref{thm:homtopyretract}] 
It follows directly from Lemmas \ref{lemma:pimp} and \ref{lemma-h} that the map $H$ is a chain homotopy between $\id$ and $\iota\circ \Pi$. Namely, $$\id-\iota\circ \Pi =\delta_{\sg} \circ H +H \circ \delta_{\sg}. $$
\end{proof}

 \bibliographystyle{plain}

\begin{thebibliography}{99}

\bibitem{Abb1} H. Abbaspour, {On algebraic structures of the Hochschild complex}, \textit{Free loop spaces in geometry and topology}, 165--222, IRMA  Lect. Math. Theor. Phys.  24, Eur. Math. Soc. Z\"urich (2015).

 \bibitem{Abb}
H. Abbaspour, 
{On the Hochschild homology of open Frobenius algebras,}
\textit{J. Noncommut. Geom.} 10 (2016), no. 2, 709-743.

\bibitem{Buc} R.O. Buchweitz, {Maximal Cohen-Macaulay modules and Tate-cohomology over Gorenstein rings}, http:hdl.handle.net/1807/16682 (1986).

\bibitem{ChSu}
M. Chas and D. Sullivan, 
{String topology,}
preprint, arXiv:math/9911159. 

\bibitem{CiFrOa}
K. Cieliebak, U. Frauenfelder and A. Oancea,
{Rabinowitz Floer homology and symplectic homology,} 
\textit{Ann. Sci. \'Ec. Norm. Sup\'er.} (4) 43 (2010), no. 6, 957-1015.

\bibitem{CoJo}
R. Cohen and J. Jones,
{A homotopy theoretic realization of string topology,} \textit{Math. Ann.}
324(4) (2002) 773-798.

\bibitem{DrPoRo} 
G. Drummond-Cole, K. Poirier, and N. Rounds, 
{Chain level string topology operations,} preprint,  arXiv:1506.02596.


\bibitem{GoHi}
M. Goresky and N. Hingston,
{Loop products and closed geodesics,}
\textit{Duke Math. J.} 150 (2009), no. 1, 117-209.

\bibitem{HiWa}
N. Hingston and N. Wahl,
{Product and coproduct in string topology,} preprint, arXiv:1709.06839.

\bibitem{HiWa2}
N. Hingston and N. Wahl,
{On the invariance of the string topology coproduct,} preprint, arXiv:1908.03857v3.

\bibitem{Kla}
A. Klamt, 
{Natural operations on the Hochschild complex of commutative Frobenius algebras via 
the complex of looped diagrams,} preprint,
arXiv:1309.4997.

\bibitem{Kau1}
R. Kaufmann, 
{Moduli space actions on the Hochschild co-chains of a Frobenius algebra.II. Correlators.}
\textit{J. Noncommut. Geom.} 2 (2008), no. 3, 283--332

\bibitem{Kau}
R. Kaufmann,
{A detailed look on actions on Hochschild complexes especially the degree $1$ co-product and actions on loop spaces,} preprint, arXiv:1807.10534. 

\bibitem{Kel}  B. Keller, {Singular Hochschild cohomology via the singularity category},
\textit{C. R. Math. Acad. Sci. Paris} 356  (11-12) (2018), 1106--1111.


 \bibitem{LaSt} P. Lambrechts and D. Stanley,
{Poincar\'e duality and commutative differential graded algebras,}
\textit{Ann. Sci. \'Ec. Norm. Sup\'er.} (4) 41 (2008), no. 4, 495-509.

\bibitem{Na} F. Naef,
{The string coproduct "knows" Reidemeister/Whitehead torsion}, preprint, arXiv:2106.11307.

\bibitem{NaWi}
F. Naef and T. Willwacher, 
{String topology and configuration spaces of two points}, preprint, arXiv:1911.06202.

\bibitem{Orl} 
D. Orlov, 
{Triangulated categories of singularities and $D$-branes in Landau-Ginzburg models},  \textit{Proc. Steklov Inst. Math.} 246 (3) (2004), 227--248.


\bibitem{RiWa}
M. Rivera and Z. Wang,
{Singular Hochschild cohomology and algebraic string operations}, 
\textit{J. Noncommut. Geom.} 13 (2019), 297-361.

\bibitem{Sul}
D. Sullivan,
{Open and closed string field theory interpreted in classical algebraic topology},  
{\it Topology, geometry and quantum field theory}, London Math. Soc. Lecture Note Ser. 308 (2004) 344-357. Cambridge Univ. Press.


\bibitem{TrZe}
T. Tradler and M. Zeinalian, 
{Algebraic string operations,} \textit{K-Theory} 38 (2007), 59-82.

\bibitem{VdB}
M. Van den Bergh, 
{Existence theorems for dualizing complexes over non-commutative graded and filtered rigns,} \textit{J. Algebra} 195 (1997), 662-679.


\bibitem{WaWe}
N. Wahl and C. Westerland,
{Hochschild homology of structured algebras,}
\textit{Adv. Math.} 288 (2016), 240-307.

\bibitem{Wan}
Z. Wang,
{Gerstenhaber algebra and Deligne's conjecture on Tate-Hochschild cohomology,} \textit{Trans. Amer. Math. Soc.} 374 (2021), 4537-4577.

\end{thebibliography}

\Addresses
\end{document}